
\documentclass[11pt,reqno]{amsart}
\usepackage[a4paper]{geometry}
\usepackage{amsmath,amssymb}
\usepackage{mathrsfs}
\usepackage{graphicx,color}

\newtheorem{lem}{Lemma}[section]
\newtheorem{thm}[lem]{Theorem}
\newtheorem{prop}[lem]{Proposition}
\newtheorem{cor}[lem]{Corollary}
\theoremstyle{remark}
\newtheorem{expl}{Example}[section]
\numberwithin{equation}{section}
\theoremstyle{definition}
\newtheorem{df}{Definition}
\newtheorem{rem}{Remark}[section]

\def\R{\mathbb R}
\def\C{\mathbb C}
\def\D{\mathrm D}
\def\d{\mathrm d}
\def\S{\mathbb S}
\def\i{\mathrm i}
\def\e{\mathrm e}
\let\Im\relax
\DeclareMathOperator{\Im}{Im}
\let\Re\relax
\DeclareMathOperator{\Re}{Re}
\DeclareMathOperator{\spec}{spec}
\DeclareMathOperator{\diag}{diag}
\DeclareMathOperator{\bdiag}{b-diag}
\DeclareMathOperator{\trace}{tr}
\DeclareMathOperator{\supp}{supp}
\DeclareMathOperator{\spann}{span}

\begin{document}
\title[Anisotropic thermo-elasticity]{Thermo-elasticity for anisotropic media\\ in higher dimensions}\author{Jens Wirth}
\address{J. Wirth, Institut f\"ur Analysis, Dynamik und Modellierung, Fachbereich Mathematik, Universit\"at Stuttgart, Pfaffenwaldring 57, 70569 Stuttgart, Germany}
\email{jens.wirth@iadm.uni-stuttgart.de}
\date{}
\begin{abstract}
In this note we develop tools to study the Cauchy problem for the system of thermo-elasticity in higher
dimensions. The theory is developed for general homogeneous anisotropic media under non-degeneracy conditions. 

For degenerate cases a method of treatment is sketched and for the cases of cubic media and hexagonal media 
detailed studies are provided.
\end{abstract}
\keywords{thermo-elasticity, a-priori estimates, anisotropic media, degenerate hyperbolic problems}

\maketitle

\section{Introduction}
While isotropic thermo-elasticity is a well-known and well-established subject 
(see, e.g., the book of Jiang Song-Racke \cite{JR} and references therein) only 
very few results are available for the case of anisotropic media. Among them are 
the theses of Borkenstein \cite{Bor93} for cubic media and Doll \cite{Dol04} for 
the case of rhombic media together with the authors treatments \cite{RW07}, 
\cite{Wirth:2007c}, all in two space dimensions.

In this paper the system of anisotropic thermo-elasticity in three (and more) dimensions, i.e.,
\begin{subequations}\label{eq:CP}
\begin{align}
 U_{tt} + A(\D) U +{\gamma}\nabla\theta &=0,\\
 \theta_t-\kappa\Delta\theta +{\gamma}\nabla\cdot U_t&=0
\end{align}
\end{subequations}
for the elastic displacement $U(t,\cdot):\R^n\to\R^n$ and temperature difference 
$\theta(t,\cdot):\R^n\to\R$ to the equilibrium state, will be considered. The 
system \eqref{eq:CP} couples the hyperbolic elasticity equation with the parabolic 
heat equation. The operator $A(\D)$ describes the elastic properties of the 
underlying medium, while $\kappa$ denotes its thermal conductibility. The constant 
$\gamma$ describes the thermo-elastic coupling. Basic assumptions of our theory 
are $\kappa>0$, $\gamma^2>0$ together with
\begin{itemize}
\item $A(\xi)=|\xi|^2A(\eta)$, $\eta=\xi/|\xi|$, is a 2-homogeneous matrix-valued symbol;
\item $A:\S^{n-1}\to\R^{n\times n}$ is a real-analytic function of $\eta\in\S^{n-1}$, $n\ge 3$;
\item $A(\eta)=A^*(\eta)>0$ is self-adjoint and positive.
\end{itemize}
In general we can {\em not} assume that $A(\eta)$ is non-degenerate in the sense that
$\#\spec A(\eta)=n$ for all $\eta\in\S^{n-1}$ (as done for the two-dimensional case in 
\cite{RW07}). All basic examples show degeneracies in dimensions $n\ge3$.

\begin{expl} {\sl Isotropic media} 
\begin{equation}
  A(\eta)=\mu I + (\lambda+\mu) \eta\otimes\eta
\end{equation}
with Lam\'e constants $\lambda$ and $\mu$. The matrix $A(\eta)$ is positive as long as $\mu>0$
and $\lambda>-2\mu$. The eigenvectors of $A(\eta)$ are multiples of $\eta$ and $\eta^\perp$ 
and thus invariant under rotations of frequency space.
\end{expl}
\begin{expl}\label{expl2}{\sl Cubic media}
\begin{equation}\label{eq:1.3}
   A(\eta) = \begin{pmatrix} (\tau-\mu)\eta_1^2+\mu & (\lambda+\mu)\eta_1\eta_2 & \cdots &(\lambda+\mu)\eta_1\eta_n \\ (\lambda+\mu)\eta_1\eta_2 & (\tau-\mu)\eta_2^2+\mu & &\vdots \\ \vdots & & \ddots & \vdots\\ (\lambda+\mu)\eta_1\eta_n & \cdots & \cdots & (\tau-\mu)\eta_n^2+\mu   \end{pmatrix}
\end{equation}
described by parameters $\lambda$, $\mu$ and $\tau$. Later we will describe the 
assumptions made on these parameters and the resulting spectral properties of
the matrix function $A(\eta)$ more precise.  In the case of three space dimensions, the
matrix $A(\eta)$ is positive if and only if $\mu>0$, $\tau>0$ together with $ - 2\mu-\tau/2 < \lambda<\tau$. In three space dimensions this will
be one of our main examples.
\end{expl}

\begin{expl}
We can replace the constant $\tau$ on the diagonal by $\tau_1,\dots,\tau_n$ in 
\eqref{eq:1.3}. This yields so-called {\sl rhombic media}. The behaviour of
rhombic media is close to that of cubic media if the parameters are of similar
size, in general there will appear exceptional situations. See, e.g., \cite{Wirth:2007c} 
or \cite{Wirth:2009} for a discussion of this effect in two space dimensions.
\end{expl}

\begin{expl}{\sl Hexagonal media} are another particularly interesting case for 
three space dimensions. Since we want to come back to them later on we introduce 
the corresponding operator. It is given by
\begin{equation}\label{eq:1.4}
   A(\eta) = \mathbb D(\eta)^T \mathcal C \mathbb D(\eta),
\end{equation}
where $\mathcal C$ contains the $5$ structure constants $\tau_1$, $\tau_2$, 
$\lambda_1$, $\lambda_2$ and $\mu$ and $\mathbb D(\eta)$ is of a particular form,
\begin{equation}\label{eq:1.5}
\mathcal C= \begin{pmatrix} \tau_1&\lambda_1&\lambda_2\\
   \lambda_1&\tau_1&\lambda_2\\
   \lambda_2&\lambda_2&\tau_2\\
   &&&\mu\\
   &&&&\mu\\
   &&&&&\frac{\tau_1-\lambda_1}2 \end{pmatrix} ,
   \qquad
   \mathbb D(\eta) = \begin{pmatrix} \eta_1\\&\eta_2\\&&\eta_3\\&\eta_3&\eta_2\\\eta_3&&\eta_1\\\eta_2&\eta_1\end{pmatrix}.
\end{equation}
\end{expl}
%
Even the first (non-trivial anisotropic) example, the case of cubic media in 
three space dimensions, has degenerate directions in which $A(\eta)$ has double 
eigenvalues. Later on we will analyse this example in detail. 
\begin{df}
We call a direction $\eta\in\S^{n-1}$ (elastically) {\em non-degenerate} if
\begin{equation}
   \#\spec A(\eta) = n
\end{equation}
holds true for this direction $\eta$.
\end{df}

The set of non-degenerate directions is an open subset of $\S^{n-1}$. For 
non-degenerate directions the treatment of \cite{RW07} transfers almost immediately 
and gives a representation of solutions. We will sketch the results in Section~\ref{sec2}. 
In Section~\ref{sec3} we consider special degenerate directions and discuss the 
examples of cubic and hexagonal media. Dispersive estimates for solutions are given 
in Section~\ref{sec4}. In the neighbourhood of degenerate directions they are 
essentially based on estimates developed by Liess \cite{Lie04}, \cite{Lie06} for
the treatment of anisotropic acoustic equations.

\section{Treatment of non-degenerate directions}\label{sec2}
For the following we consider a simply connected open subset $\mathcal U$ of $\S^{n-1}$, 
where the symbol $A(\eta)$ has $n$ distinct (and real) eigenvalues. We denote these 
eigenvalues in ascending order as
\begin{equation}
 0< \varkappa_1(\eta)<\varkappa_2(\eta)<\cdots<\varkappa_n(\eta). 
\end{equation}
By analytic perturbation theory, see \cite{Kato:1980}, we know that these eigenvalues 
are real-analytic and that we find corresponding normalised eigenvectors
\begin{equation}
   r_1(\eta),\dots, r_n(\eta)\in C^\infty (\mathcal U,\S^{n-1})
\end{equation}
depending analytically on $\eta\in\mathcal U$.
Collecting them in the unitary matrix
\begin{align}
  &M(\eta) = \big( r_1(\eta) | r_2(\eta) | \cdots | r_n(\eta) \big),\\
  &M^*(\eta)M(\eta) = I = M(\eta)M^*(\eta),
\end{align}
we can diagonalise the matrix $A(\eta)$
\begin{align}
  &A(\eta)M(\eta) = M(\eta) \mathcal D(\eta),\\
  &\mathcal D(\eta) = \diag \big( \varkappa_1(\eta), \varkappa_2(\eta), \ldots ,\varkappa_n(\eta)\big).
\end{align}
In our treatment we will not make use of analyticity directly, instead our use
of perturbation theory will be based on \cite{JaWi} und \cite{Wirth:2008b} and 
uses only smooth dependence. This will be of interest for generalisations
later on. Therefore, whenever we use analyticity, we will explicitly state that.

We use $M(\eta)$ to reduce the thermo-elastic system to a system of first 
order. For this we denote by $\hat U$ and $\hat\theta$ the partial Fourier transforms 
of $U$ and $\theta$ with respect to the spatial variables and consider
\begin{equation}\label{eq:2.7}
  V = \begin{pmatrix}
  (\D_t+\mathcal D^{1/2}(\xi))M^*(\eta) \hat U\\
  (\D_t-\mathcal D^{1/2}(\xi))M^*(\eta) \hat U\\
 \hat \theta 
  \end{pmatrix} \in \C^{2n+1},
\end{equation}
as usual $\D_t=-\i\partial_t$ and $\eta=\xi/|\xi|$. Then $V$ satisfies a first order 
system of ordinary differential equations, which has an  apparantly simple structure. 
Straightforward calculation shows that
\begin{equation}
\D_t V = B(\xi) V
\end{equation}
holds true with coefficient matrix
\begin{equation}\label{eq:2.9}
  B(t,\xi) = \begin{pmatrix} 
  \omega_1(\xi) & & & & & & \i\gamma a_1(\xi)\\
  & \omega_2(\xi) & & & & & \i\gamma a_2(\xi)\\
  & & \ddots & & & & \vdots \\
  & & & -\omega_1(\xi) & & & \i\gamma a_1(\xi)\\
  & & & & -\omega_2(\xi) & & \i\gamma a_2(\xi)\\
  & & & & & \ddots & \vdots \\
 \frac{\i\gamma}2 a_1(\xi)& \frac{\i\gamma}2 a_2(\xi)&\cdots& \frac{\i\gamma}2 a_1(\xi)& \frac{\i\gamma}2 a_2(\xi)&\cdots & \i\kappa|\xi|^2
  \end{pmatrix},
\end{equation}
where $\omega_j(\xi)=\sqrt{\varkappa_j(\xi)}\in C^\infty(\mathcal U,\R_+)$ and
\begin{equation}
  a_j(\xi) = r_j(\eta)\cdot\xi.
\end{equation}
Following the conventions of \cite{RW07} we denote these functions 
$a_j(\xi)$ as the {\em coupling functions} of the thermo-elastic system 
associated to the elastic operator $A(\D)$. They play  a prominent r\^ole 
for the description of the time-asymptotic behaviour of solutions. This 
reflects the fact that they couple the homogeneous first order entries 
in $B(\xi)$ with the second order lower right corner entry. Note, that
\begin{equation}
   \sum_{j=1}^n a_j^2(\eta) = 1.
\end{equation}
Zeros of the coupling functions are of particular importance. Following 
\cite[Def.~1]{RW07} we define:

\begin{df}
  A non-degenerate direction $\eta\in\S^{n-1}$ is called
  \begin{itemize}
  \item {\em hyperbolic} if one of the coupling functions vanishes; more precisely,
  it is called {\em hyperbolic with respect to the eigenvalue $\varkappa_j(\eta)$} if $a_j(\eta)=0$;
  \item {\em parabolic} if all coupling functions are non-zero.
  \end{itemize}
\end{df}

In the anisotropic case the set of hyperbolic directions is 
(generically\footnote{If not, by analyticity it follows that one coupling 
  function vanishes on $\mathcal U$ and the system is therefore decoupled. 
  This case is reduced to the study of the lower dimensional blocks, one 
  is a hyperbolic system the other one a thermo-elastic system of lower 
  dimension. This is, e.g., the case for hexagonal media, see 
  Section~\ref{sec3.3}.}) a lower dimensional subset of $\S^{n-1}$. In order to 
decide whether a direction is hyperbolic or parabolic we can employ the following
proposition. We denote for a matrix $A$ and a vector $\eta$ by
\begin{equation}
   \mathcal Z(A,\eta) = \spann \{\, A^k\eta\,|\,k=0,1,\ldots\,\}
\end{equation}
the corresponding cyclic subspace, i.e. the span of the trajectory of $\eta$ 
under the action of the matrix $A$.

\begin{prop}\label{prop:2.1}
The following statements are equivalent:
\begin{enumerate} 
\item The cyclic subspace of $\eta$ has dimension $n-k$, i.e.,
  $\dim\mathcal Z(A(\eta),\eta) = n - k$.
\item Exactly $k$ of the coupling functions vanish in $\eta$.
\end{enumerate}
Hence, a  non-degenerate direction 
$\eta\in\S^{n-1}$ is parabolic if and only if  $\mathcal Z(A(\eta),\eta)=\R^n$ and therefore
\begin{equation}
  \det\big(\eta | A(\eta)\eta | \cdots | A^{n-1}(\eta)\eta \big) \ne 0.
\end{equation}
\end{prop}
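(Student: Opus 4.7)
The plan is to diagonalise everything in the eigenbasis of $A(\eta)$ and reduce $(1)\Leftrightarrow(2)$ to a Vandermonde rank computation. Since the direction $\eta$ is non-degenerate, the orthonormal eigenvectors $r_1(\eta),\dots,r_n(\eta)$ form a basis of $\mathbb R^n$, and I would first expand
\begin{equation*}
\eta = \sum_{j=1}^n \bigl(r_j(\eta)\cdot\eta\bigr)\,r_j(\eta) = \sum_{j=1}^n a_j(\eta)\,r_j(\eta),
\end{equation*}
so that the coupling functions appear precisely as the coordinates of $\eta$ itself in the eigenbasis. Applying $A(\eta)^k$ then yields
\begin{equation*}
A(\eta)^k\eta = \sum_{j=1}^n \varkappa_j^{k}(\eta)\,a_j(\eta)\,r_j(\eta).
\end{equation*}

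By the Cayley--Hamilton theorem, the cyclic subspace $\mathcal Z(A(\eta),\eta)$ coincides with $\spann\{\eta, A(\eta)\eta,\dots, A^{n-1}(\eta)\eta\}$, hence its dimension equals the rank of the $n\times n$ matrix whose columns are these iterates. Expressed in the orthonormal basis $\{r_j\}$ this matrix factors as $\mathcal A\,\mathcal V$, where $\mathcal A=\diag\bigl(a_1(\eta),\dots,a_n(\eta)\bigr)$ and $\mathcal V$ is the Vandermonde matrix with entries $\mathcal V_{jk}=\varkappa_j^{k}(\eta)$ for $k=0,\dots,n-1$. Non-degeneracy guarantees that the eigenvalues $\varkappa_j(\eta)$ are pairwise distinct, so $\mathcal V$ is invertible and
\begin{equation*}
\dim\mathcal Z(A(\eta),\eta) = \rank(\mathcal A\,\mathcal V) = \rank\mathcal A = \#\{j : a_j(\eta)\ne 0\}.
\end{equation*}
This immediately gives the equivalence of $(1)$ and $(2)$, and the parabolic characterisation is the specialisation $k=0$, which is equivalent to the non-vanishing of the determinant displayed in the statement.

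I do not anticipate a serious obstacle: the argument is pure linear algebra once the eigenbasis is chosen. The only point worth emphasising is that non-degeneracy enters in two places — first to produce the orthonormal frame $\{r_j(\eta)\}$ diagonalising $A(\eta)$, and second to ensure that the Vandermonde factor $\mathcal V$ has full rank. Any extension to degenerate directions would have to replace both of these ingredients, which motivates the separate treatment of such directions later in the paper.
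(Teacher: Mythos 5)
Your proof is correct and takes essentially the same approach as the paper: expand $\eta$ in the eigenbasis, observe that the iterates $A^\ell(\eta)\eta$ have coordinates $\varkappa_j^\ell a_j$, and exploit the Vandermonde structure. Your packaging via the rank factorisation $\rank(\mathcal A\mathcal V)=\rank\mathcal A$ is a marginally tidier way to get both inequalities at once than the paper's two-sided argument, but the underlying idea is identical.
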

\begin{proof}
If we represent $\eta$ in the eigenbasis of $A(\eta)$ we obtain
\begin{equation}
  \eta = a_1(\eta) r_1(\eta) + \cdots + a_n(\eta)r_n(\eta)
\end{equation}
and therefore
\begin{equation}
  A^\ell(\eta)\eta  = \varkappa_1^\ell(\eta) a_1(\eta) r_1(\eta)+ \cdots + \varkappa_n^\ell(\eta)a_n(\eta)r_n(\eta).
\end{equation}
If $k$ of the coupling functions vanish, then $A^{n-k}(\eta)\eta$ 
must be in the span of the $A^\ell(\eta)\eta$ with $\ell=0,1,\ldots, n-k-1$ 
and thus the cyclic subspace is at most  of dimension $n-k$. On the 
other hand, the first $n-k$ vectors in the trajectory are linearly independent 
since the corresponding matrix in the basis representation with respect to 
$a_1(\eta)r_1(\eta),\dots,a_n(\eta)r_n(\eta)$ is just the van der Monde 
matrix associated to the eigenvalues of $A(\eta)$ for non-vanishing 
coupling functions and therefore regular. 
\end{proof}

\subsection{On the characteristic polynomial of the full symbol} At first we collect 
some of the spectral properties of the matrix $B(\xi)$ which are directly related 
to the characteristic polynomial of $B(\xi)$.

\begin{prop}\label{prop22}
The following identies hold true:
\begin{align}
\trace B(\xi) &= \i\kappa|\xi|^2,\\
\det B(\xi) &= \i\kappa|\xi|^{2}\det A(\xi),\\
\det(\nu-B(\xi)) &= (\nu-\i\kappa|\xi|^2) \prod_{j=1}^n (\nu^2-\varkappa_j(\xi))-\nu\gamma^2\sum_{j=1}^n a_j^2(\xi) \prod_{k\ne j}(\nu^2-\varkappa_k(\xi)).\label{eq:CharPolB}
\end{align}
Furthermore, the matrix $B(\xi)$ has a purely real eigenvalue for $\xi\ne0$ if 
and only if the direction $\eta=\xi/|\xi|$ is hyperbolic. If it is $j$-hyperbolic, 
then $\pm\omega_j(\xi)\in\spec B(\xi)$.
\end{prop}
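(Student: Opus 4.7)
The trace identity is immediate from \eqref{eq:2.9}: the first $2n$ diagonal entries pair up as $\omega_j(\xi)$ and $-\omega_j(\xi)$ and cancel, leaving only the entry $\i\kappa|\xi|^2$. The determinant identity will fall out of \eqref{eq:CharPolB} upon setting $\nu=0$: the sum vanishes at $\nu=0$ and what remains is a scalar multiple of $\prod_j\varkappa_j(\xi)=\det A(\xi)$, obtained from the diagonalisation $A(\xi)=M(\xi)\mathcal D(\xi)M^*(\xi)$. Hence the real content of the proposition is the formula \eqref{eq:CharPolB} together with the concluding statement about real eigenvalues.

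To prove \eqref{eq:CharPolB}, my plan is to view $\nu I-B(\xi)$ as a $2\times 2$ block matrix by splitting off the last row and column and to apply the Schur complement formula. The upper-left $(2n)\times(2n)$ block is the diagonal matrix
\begin{equation*}
  \bdiag\bigl(\nu I-\mathcal D^{1/2}(\xi),\,\nu I+\mathcal D^{1/2}(\xi)\bigr),
\end{equation*}
whose determinant is $\prod_j(\nu^2-\varkappa_j(\xi))$ and which is invertible off the exceptional set $\{\nu=\pm\omega_j(\xi)\}$. Because the coupling blocks in \eqref{eq:2.9} involve only the vector $a(\xi)=(a_1(\xi),\ldots,a_n(\xi))^T$, the Schur complement reduces to a scalar of the form
\begin{equation*}
  (\nu-\i\kappa|\xi|^2)-\frac{\gamma^2}{2}\sum_{j=1}^{n}a_j^2(\xi)\Bigl[\frac{1}{\nu-\omega_j(\xi)}+\frac{1}{\nu+\omega_j(\xi)}\Bigr],
\end{equation*}
and the two partial fractions combine to $2\nu/(\nu^2-\varkappa_j(\xi))$. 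Clearing the denominators by multiplying through with $\prod_j(\nu^2-\varkappa_j(\xi))$ yields \eqref{eq:CharPolB}, and the identity extends to $\nu=\pm\omega_j(\xi)$ by polynomial continuity. The main obstacle here is purely the careful bookkeeping of the $\i\gamma$ factors and the $1/2$ coming from the last row of \eqref{eq:2.9}; no deeper algebraic difficulty is hidden in the argument.

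For the final claim, suppose $\xi\ne 0$ and $\nu\in\R$ is a real eigenvalue of $B(\xi)$. In \eqref{eq:CharPolB} all coefficients are real apart from the $\i\kappa|\xi|^2$ sitting in the first summand, so $\det(\nu I-B(\xi))=0$ splits into a real and an imaginary equation. The imaginary part reduces to $\kappa|\xi|^2\prod_j(\nu^2-\varkappa_j(\xi))=0$, which forces $\nu=\pm\omega_k(\xi)$ for some index $k$ (the value $\nu=0$ is excluded since $\varkappa_j>0$). Substituting this value into the real part, the first summand vanishes, as do all terms of the sum except $j=k$, leaving
\begin{equation*}
  \pm\omega_k(\xi)\,\gamma^2 a_k^2(\xi)\prod_{j\ne k}\bigl(\varkappa_k(\xi)-\varkappa_j(\xi)\bigr)=0.
\end{equation*}
On a non-degenerate direction the Vandermonde-type product is non-zero and $\omega_k>0$, $\gamma^2>0$, so $a_k(\xi)=0$, i.e.\ $\eta$ is $k$-hyperbolic. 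The converse is obtained by running the substitution backwards: if $a_k(\eta)=0$, then at $\nu=\pm\omega_k(\xi)$ both summands of \eqref{eq:CharPolB} vanish and consequently $\pm\omega_k(\xi)\in\spec B(\xi)$.
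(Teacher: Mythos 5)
Your proposal is correct and, for the final claim, uses exactly the strategy the paper announces (separate real and imaginary parts of the characteristic polynomial); the paper offers no derivation of the first three identities, so your Schur-complement computation supplies what is left implicit. One remark on signs: the last row of $B(\xi)$ as printed in \eqref{eq:2.9} carries $+\frac{\i\gamma}{2}a_j$, which would flip the sign of the rank-one term in your Schur complement; the negative sign you use (and which is needed to reproduce \eqref{eq:CharPolB}) matches the convention used consistently elsewhere in the paper, e.g.\ in \eqref{eq:B-1d} and \eqref{eq:3.11}, so you have tacitly corrected a typographical slip in \eqref{eq:2.9}. Similarly, specialising \eqref{eq:CharPolB} to $\nu=0$ actually yields $\det B(\xi)=(-1)^n\i\kappa|\xi|^2\det A(\xi)$, so the determinant identity as stated holds up to that $(-1)^n$; this is an inaccuracy of the paper's statement rather than of your argument, but worth flagging before asserting the identity ``falls out''.
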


The proof of the last fact is fairly straightforward and consists of 
separating real and imaginary parts of the characteristic polynomial. 
Note that for all parabolic directions we can divide the characteristic 
polynomial by $\nu\prod_j (\nu^2-\varkappa_j(\xi))$ to obtain
\begin{equation}\label{eq:KelvinCharEq}
   1 = \frac{\i\kappa|\xi|^2}\nu + \gamma^2 \sum_{j=1}^n \frac{a_j^2(\xi)}{\nu^2-\varkappa_j(\xi)}.
\end{equation}
This formulation allows to consider the neighbourhoods of hyperbolic directions. 
Assume for this that the set of hyperbolic directions with respect to $\varkappa_j(\eta)$ 
\begin{equation}
   M_j = \{ \eta\in\mathcal U\,|\, a_j(\eta) = r_j(\eta)\cdot \eta=0\}
\end{equation}
is a regular submanifold of $\mathcal U$. If we consider the 
corresponding {\em hyperbolic} eigenvalues $\nu_j^\pm(\xi)$ of $B(\xi)$ in a 
neighbourhood of $M_j$, i.e. the eigenvalues which satisfy
\begin{equation}
  \lim_{\eta\to M_j} \nu_j^\pm(\xi)=\pm\omega_j(\xi)
\end{equation}
for fixed $|\xi|$,  equation \eqref{eq:KelvinCharEq} gives a precise 
description of the behaviour  of the imaginary part of these eigenvalues. 
The proof is a straightforward generalisation from \cite[Prop.~2.2]{RW07}.

\begin{prop}\label{prop:2.3}
  The non-tangential limit
 \begin{equation}
   \lim_{\eta\to M_j} \frac{a_j^2(\xi)}{\nu_j^\pm(\xi)^2-\varkappa_j(\xi)}
   = 1 \mp \frac{\i\kappa|\xi|^2}{\omega_j(\xi)} - \gamma^2\sum_{k\ne j} \frac{a_k^2(\xi)}{\varkappa_j(\xi)-\varkappa_k(\xi)}=\gamma^2(C_{\bar\eta}\mp\i D_{\bar\eta}|\xi|)
 \end{equation} 
  exists and is non-zero for all $\xi\ne0$. Furthermore,
  \begin{equation}
  \lim_{\eta \to M_j} \frac{\Im \nu_j^\pm(\xi)}{a_j^2(\eta)} = \frac{D_{\bar\eta} |\xi|^2}{2\omega_j(\bar\eta) (C_{\bar\eta}^2+|\xi|^2\D_{\bar\eta}^2)} > 0.
  \end{equation}
\end{prop}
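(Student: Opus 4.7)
The plan is to read off both limits directly from the Kelvin-type characteristic equation \eqref{eq:KelvinCharEq} by isolating the term that becomes singular as $\eta\to M_j$. For a hyperbolic eigenvalue $\nu_j^\pm(\xi)$ I rewrite \eqref{eq:KelvinCharEq} at $\nu=\nu_j^\pm(\xi)$ as
\begin{equation*}
\gamma^2\,\frac{a_j^2(\xi)}{\nu_j^\pm(\xi)^2-\varkappa_j(\xi)}
 = 1-\frac{\i\kappa|\xi|^2}{\nu_j^\pm(\xi)}
 -\gamma^2\sum_{k\ne j}\frac{a_k^2(\xi)}{\nu_j^\pm(\xi)^2-\varkappa_k(\xi)}.
\end{equation*}
Because $\bar\eta\in M_j$ is non-degenerate, $\varkappa_j(\bar\eta)\ne\varkappa_k(\bar\eta)$ for $k\ne j$, so every denominator on the right stays uniformly bounded away from $0$ as $\eta\to\bar\eta$ and $\nu_j^\pm(\xi)\to\pm\omega_j(\xi)$. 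The right-hand side is therefore a continuous function of $\eta$ near $\bar\eta$ and converges to $1\mp\i\kappa|\xi|^2/\omega_j(\xi)-\gamma^2\sum_{k\ne j}a_k^2(\xi)/(\varkappa_j(\xi)-\varkappa_k(\xi))$, which is what the statement calls $\gamma^2(C_{\bar\eta}\mp\i D_{\bar\eta}|\xi|)$. In particular $\gamma^2 D_{\bar\eta}=\kappa/\omega_j(\bar\eta)>0$, so the limit is non-zero, which proves the first claim.

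For the behaviour of $\Im\nu_j^\pm(\xi)$, I set $\nu_j^\pm(\xi)=\pm\omega_j(\xi)+\epsilon_\pm$ with $\epsilon_\pm\to 0$. The factorisation $\nu_j^\pm(\xi)^2-\varkappa_j(\xi)=\pm2\omega_j(\xi)\epsilon_\pm+\epsilon_\pm^2$ combined with the first part gives
\begin{equation*}
\pm 2\omega_j(\xi)\,\epsilon_\pm+\epsilon_\pm^2 = \frac{a_j^2(\xi)}{C_{\bar\eta}\mp\i D_{\bar\eta}|\xi|}\bigl(1+o(1)\bigr),
\end{equation*}
hence at leading order $\epsilon_\pm=\pm a_j^2(\xi)/\bigl(2\omega_j(\xi)(C_{\bar\eta}\mp\i D_{\bar\eta}|\xi|)\bigr)(1+o(1))$. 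Taking imaginary parts, the two occurrences of $\pm$ multiply to give a positive real number, and substituting $\Im(C_{\bar\eta}\mp\i D_{\bar\eta}|\xi|)^{-1}=\pm D_{\bar\eta}|\xi|/(C_{\bar\eta}^2+D_{\bar\eta}^2|\xi|^2)$ together with $a_j^2(\xi)=|\xi|^2a_j^2(\eta)$ and $\omega_j(\xi)=|\xi|\omega_j(\bar\eta)$ gives, after dividing by $a_j^2(\eta)$, the claimed formula. Positivity follows from $D_{\bar\eta}>0$.

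The most delicate point is not a calculation but a setup issue: one has to know that a continuous hyperbolic branch $\nu_j^\pm(\xi)$ with $\nu_j^\pm(\xi)\to\pm\omega_j(\xi)$ actually exists in a neighbourhood of $M_j$. This comes from perturbation theory applied to the characteristic polynomial \eqref{eq:CharPolB}: at points of $M_j$ the factor $\nu^2-\varkappa_j(\xi)$ is the only one vanishing at $\nu=\pm\omega_j(\xi)$ (by non-degeneracy of $\bar\eta$ and $a_j(\bar\eta)=0$), and it vanishes simply there, so two simple roots branch smoothly. The qualifier \emph{non-tangential} enters when one inverts the quadratic $\pm 2\omega_j\epsilon_\pm+\epsilon_\pm^2=O(a_j^2)$: this inversion is uniformly controlled as long as $\eta$ approaches $M_j=\{a_j=0\}$ inside a cone where $a_j^2(\eta)$ is comparable to a power of $\mathrm{dist}(\eta,M_j)$, which is exactly the non-tangential regime.
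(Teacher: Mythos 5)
Your proof is correct and follows exactly the route the paper intends: the paper itself gives no explicit argument, stating only that it is ``a straightforward generalisation from \cite[Prop.~2.2]{RW07}'' via the rearranged characteristic equation \eqref{eq:KelvinCharEq}, which is precisely your starting point. Your isolation of the singular $j$-th summand, the passage to the limit using non-degeneracy of $\bar\eta$, the expansion $\nu_j^\pm=\pm\omega_j+\epsilon_\pm$ with the factorisation $\nu_j^{\pm\,2}-\varkappa_j=\pm2\omega_j\epsilon_\pm+\epsilon_\pm^2$, and the sign bookkeeping $(\pm)\cdot(\pm)=+$ when taking imaginary parts all reproduce the intended argument; you also correctly read off $\gamma^2 D_{\bar\eta}=\kappa/\omega_j(\bar\eta)>0$, which is what makes the limit non-zero and the final expression positive. (You have also tacitly, and correctly, inserted the factor $\gamma^2$ on the left of the first displayed equation in the proposition, which appears to have been dropped in the paper's typesetting; with that reading, the 0-homogeneity of $a_k^2(\xi)/(\varkappa_j(\xi)-\varkappa_k(\xi))$ and the 1-homogeneity of $\omega_j$ make all the $|\xi|$-powers come out exactly as stated.) Your closing remark on why the branch $\nu_j^\pm$ exists and why ``non-tangential'' is needed when inverting the quadratic in $\epsilon_\pm$ is a useful gloss that the paper leaves implicit.
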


\subsection{Asymptotic expansion of the eigenvalues as $|\xi|\to0$} We decompose $B(\xi)$
into homogeneous components $B(\xi)=B_1(\xi)+B_2(\xi)$ of degree $1$ and $2$, 
respectively. For sufficiently small $|\xi|$ we expect the eigenvalues of $B(\xi)$ 
to be close to the eigenvalues of $B_1(\xi)$. For parabolic directions the 
(non-zero) eigenvalues of $B_1(\eta)$ can be determined from the equation
\begin{equation}\label{eq:2.24}
   \frac1{\gamma^2} = \sum_{j=1}^n \frac{a_j^2(\eta)}{\tilde\nu^2-\varkappa_j(\eta)}, 
\end{equation}
which follows directly from \eqref{eq:KelvinCharEq} with $\kappa=0$. It 
can be solved (e.g. graphically, see Figure~\ref{fig1} for $n=3$) to 
obtain the distinct eigenvalues $0$, $\pm\tilde\nu_1(\eta)$, \dots, $\pm\tilde\nu_n(\eta)$ 
ordered as
\begin{equation}
   0 < \omega_1(\eta) < \tilde\nu_1(\eta) < \omega_2(\eta) < \tilde\nu_2(\eta) < \cdots < \omega_n(\eta) < \tilde\nu_n(\eta).
\end{equation}

\begin{figure}
\includegraphics[width=8cm]{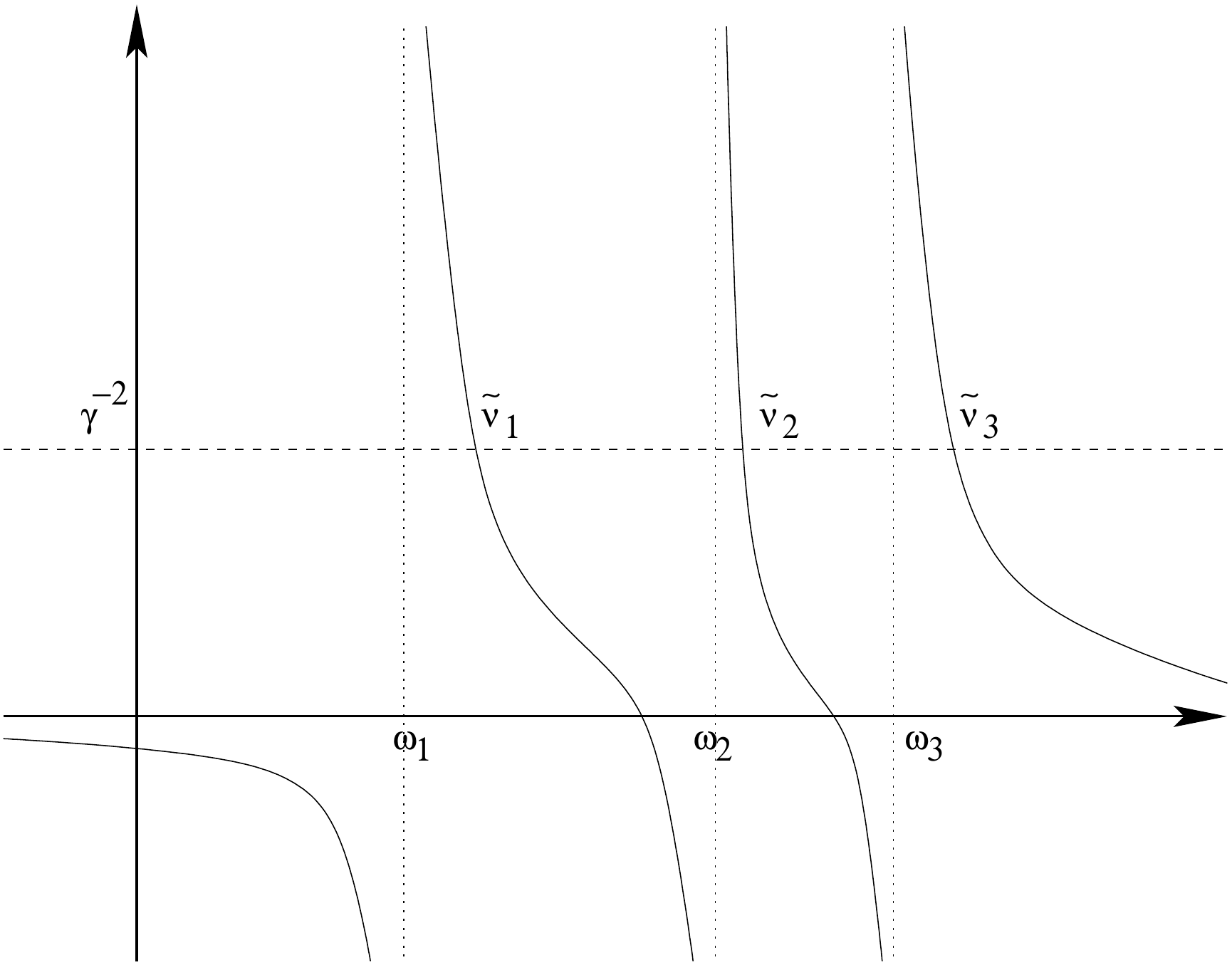}
\caption{Non-zero eigenvalues of $B_1(\xi)$ for parabolic directions.}\label{fig1}
\end{figure}

For hyperbolic directions a similar result holds true. In the case of 
hyperbolic directions  w.r.to $\varkappa_j(\eta)$ eigenvalues move to 
$\omega_j(\eta)$. According to the choice of the coupling constant 
$\gamma$ different cases occur:
\begin{enumerate}
\item if $\frac1{\gamma^2}$ is large then $\tilde\nu_j(\eta)=\omega_j(\eta)$, the other inequalities are unchanged;
\item if $\frac1{\gamma^2}$ is small then $\tilde\nu_{j-1}(\eta)=\omega_j(\eta)$ and the other inequalities remain true.
\end{enumerate}
The critical threshold between these two cases is
\begin{equation}\label{eq:2.27}
   \frac1{\gamma^2} = \sum_{k\ne j} \frac{a_k^2(\eta)}{\varkappa_j(\eta)-\varkappa_k(\eta)},
\end{equation}
where $B_1(\eta)$ has the {\em double} eigenvalue 
$\tilde\nu_{j-1}(\eta)=\omega_j(\eta)=\tilde\nu_j(\eta)$. Following the 
conventions from \cite{RW07} we define:

\begin{df} 
  We denote  a hyperbolic direction w.r.to $\varkappa_j(\eta)$ as 
$\gamma$-degenerate if \eqref{eq:2.27} holds true.
\end{df}

For the following treatment we exclude $\gamma$-degenerate hyperbolic 
directions and assume instead that for all hyperbolic directions in 
$\mathcal U$ condition \eqref{eq:2.27} is not satisfied for the 
corresponding index $j$. Then the following statement is apparent.

\begin{prop}\label{prop:2.4} Let $\eta$ be not $\gamma$-degenerate. Then
the matrix $B_1(\eta)$ has $2n+1$ distinct real eigenvalues $0, \pm\tilde\nu_1,\dots,\pm\tilde\nu_n$
for all $\eta\in\mathcal U$.
\end{prop}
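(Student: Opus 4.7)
The plan is to read off the characteristic polynomial of $B_1(\eta)$ from Proposition~\ref{prop22} (with $\kappa=0$ and $|\xi|=1$), separate out the trivial zero eigenvalue, and then analyse the remaining roots through a monotone secular equation in $\mu=\nu^2$. Concretely, \eqref{eq:CharPolB} yields
\begin{equation*}
  \det(\nu-B_1(\eta)) = \nu\left[\prod_{j=1}^n(\nu^2-\varkappa_j(\eta)) - \gamma^2\sum_{j=1}^n a_j^2(\eta)\prod_{k\ne j}(\nu^2-\varkappa_k(\eta))\right],
\end{equation*}
so $\nu=0$ is a simple eigenvalue and the remaining $2n$ eigenvalues come in symmetric pairs $\pm\tilde\nu_j$ whose squares $\mu=\nu^2$ are roots of a polynomial of degree $n$. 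It therefore suffices to exhibit $n$ distinct positive real roots of this polynomial and, in the hyperbolic case, to verify that those roots are disjoint from the $\varkappa_j$ which supply the hyperbolic eigenvalues.

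For a parabolic direction all $a_j(\eta)\ne 0$, and dividing by $\prod_j(\mu-\varkappa_j)$ rewrites the equation as \eqref{eq:2.24}, that is, $f(\mu):=\sum_j a_j^2(\eta)/(\mu-\varkappa_j(\eta))=1/\gamma^2$. The function $f$ has simple poles at $\varkappa_1<\cdots<\varkappa_n$ with positive residues $a_j^2$ and satisfies $f'(\mu)=-\sum_j a_j^2/(\mu-\varkappa_j)^2<0$ off the pole set. Hence $f$ decreases strictly from $+\infty$ to $-\infty$ on each bounded pole interval $(\varkappa_j,\varkappa_{j+1})$ and from $+\infty$ to $0^+$ on $(\varkappa_n,\infty)$, while $f<0$ on $(-\infty,\varkappa_1)$. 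Since $1/\gamma^2>0$, the secular equation has exactly one root in each of the $n$ intervals above $\varkappa_1$, producing $n$ distinct positive values $\tilde\nu_1^2<\cdots<\tilde\nu_n^2$ with the interlacing $\omega_j<\tilde\nu_j<\omega_{j+1}$ (and $\tilde\nu_n>\omega_n$) built in.

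For a hyperbolic direction let $J=\{j:a_j(\eta)=0\}$. Each summand of the bracket above contains $(\mu-\varkappa_{j_0})$ for every $j_0\in J$ — the summand of index $j_0$ vanishes by $a_{j_0}=0$, and every other summand carries the factor through $\prod_{k\ne j}$ — so I can pull out $\prod_{j_0\in J}(\mu-\varkappa_{j_0})$. This yields the exact eigenvalues $\pm\omega_{j_0}$, $j_0\in J$, and a reduced secular equation $\sum_{k\notin J}a_k^2/(\mu-\varkappa_k)=1/\gamma^2$ of the previous type on the $n-|J|$ remaining poles, which by the same monotonicity gives $n-|J|$ further distinct positive roots. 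The only coincidence I must rule out is that one of these reduced roots equals some $\varkappa_{j_0}$ with $j_0\in J$; substituting $\mu=\varkappa_{j_0}$ into the reduced polynomial and dividing by the non-vanishing $\prod_{k\notin J}(\varkappa_{j_0}-\varkappa_k)$ produces
\begin{equation*}
  1-\gamma^2\sum_{k\notin J}\frac{a_k^2(\eta)}{\varkappa_{j_0}(\eta)-\varkappa_k(\eta)},
\end{equation*}
and since the omitted terms with $k\in J\setminus\{j_0\}$ vanish automatically, this agrees with the quantity in \eqref{eq:2.27} for $j=j_0$. By the non-$\gamma$-degeneracy assumption it is nonzero for every $j_0\in J$, and therefore the $2n+1$ values $0$ and $\pm\tilde\nu_k$ are pairwise distinct.

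The interlacing itself is the classical Cauchy secular-equation argument already used for $n=2$ in \cite{RW07} and is routine; the one mildly delicate point I anticipate is the last coincidence check, whose pleasant feature is that the single-index condition \eqref{eq:2.27} remains exactly the right criterion even when several coupling functions vanish simultaneously, so no stronger non-degeneracy assumption is needed.
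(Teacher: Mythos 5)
Your proof is correct and follows the same strategy the paper regards as ``apparent'' from the preceding discussion: derive the secular equation \eqref{eq:2.24} from \eqref{eq:KelvinCharEq} with $\kappa=0$, establish interlacing of the $\tilde\nu_j^2$ between the $\varkappa_j$ via the strict monotonicity of $f(\mu)=\sum_j a_j^2/(\mu-\varkappa_j)$, and invoke non-$\gamma$-degeneracy \eqref{eq:2.27} to rule out coincidences with the hyperbolic eigenvalues. Your explicit factorisation of $\prod_{j_0\in J}(\mu-\varkappa_{j_0})$ when several coupling functions vanish at once, together with the check that the single-index criterion \eqref{eq:2.27} remains exactly the right non-degeneracy condition for each $j_0\in J$, is a careful and correct fleshing-out of what the paper leaves implicit (the paper only discusses the case of a single vanishing $a_j$).
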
 

Proposition~\ref{prop:2.4} allows to apply the standard diagonalisation 
scheme (see \cite[Sec.~2.1]{JaWi}) to $B(\xi)=B_1(\xi)+B_2(\xi)$ as 
$\xi\to0$. Hence, eigenvalues, eigenprojections and all their derivatives 
have full asymptotic expansions as $\xi\to0$. The proof is almost 
identical to that from \cite[Prop.~2.5]{RW07} and is omitted.

\begin{prop}\label{prop:2.5}
 For all not $\gamma$-degenerate directions $\eta=\xi/|\xi|\in\mathcal U$ 
the eigenvalues and eigenprojections of $B(\xi)$ 
 have full asymptotic expansions as  $\xi\to0$. The main terms are given by
  \begin{subequations}
  \begin{align}
  \nu_0(\xi)&=\i\kappa|\xi|^2 b_0(\eta)+\mathcal O(|\xi|^3)\\
  \nu_j^\pm(\xi)&= \pm|\xi|\tilde\nu_j(\eta)+\i\kappa|\xi|^2 b_j(\eta)+\mathcal O(|\xi|^3)
  \end{align}
  \end{subequations}
  with
  \begin{subequations}\label{eq:b-def}
  \begin{align}
    b_0(\eta) &= \left(1+\gamma^2 \sum_{k=1}^n \frac{a_k^2(\eta)}{\varkappa_k(\eta)}\right)^{-1} >0\\
  \intertext{and}
    b_j(\eta) &= \left(1 + \gamma^2 \sum_{k=1}^n a_k^2(\eta) \frac{\tilde\nu_j^2(\eta)+\varkappa_k(\eta)}{(\tilde\nu_j^2(\eta)-\varkappa_k(\eta))^2}\right)^{-1} \ge 0.
    \end{align}
    \end{subequations}
    Furthermore,  $b_j(\eta)=0$ if and only if $\eta$ is hyperbolic with respect to the eigenvalue $\varkappa_j(\eta)$.
\end{prop}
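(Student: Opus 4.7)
The plan is to apply the standard diagonalisation scheme from \cite{JaWi} to the decomposition $B(\xi) = |\xi|B_1(\eta) + |\xi|^2 B_2(\eta)$, where by homogeneity $B_1(\eta)$ is the degree-one part of $B(\xi)$ evaluated at $\eta = \xi/|\xi|$ and $B_2(\eta)$ has a single non-zero entry $\i\kappa$ in position $(2n+1,2n+1)$. Proposition~\ref{prop:2.4} guarantees that $B_1(\eta)$ has $2n+1$ pairwise distinct eigenvalues, so the hypotheses of the diagonalisation scheme are met and one obtains full asymptotic expansions of the eigenvalues and eigenprojections of $B(\xi)$ in powers of $|\xi|$ as $\xi\to 0$.

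To identify the main terms I invoke the first-order perturbation formula: if $\tilde\nu \in \{0,\pm\tilde\nu_1(\eta),\ldots,\pm\tilde\nu_n(\eta)\}$ is an eigenvalue of $B_1(\eta)$ with right eigenvector $r$ and left eigenvector $\ell$ normalised by $\ell^T r = 1$, then
\begin{equation*}
\nu(\xi) = |\xi|\tilde\nu + |\xi|^2\,\ell^T B_2(\eta) r + \mathcal O(|\xi|^3) = |\xi|\tilde\nu + \i\kappa|\xi|^2\,\ell_{2n+1}r_{2n+1} + \mathcal O(|\xi|^3),
\end{equation*}
so that the coefficient $b(\eta)$ is identified with the product $\ell_{2n+1}r_{2n+1}$ under the chosen normalisation. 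Solving $B_1(\eta)r = \tilde\nu r$ componentwise yields, for $\tilde\nu \neq 0$, the expressions $u_k = \i\gamma a_k w/(\tilde\nu-\omega_k)$ and $v_k = \i\gamma a_k w/(\tilde\nu+\omega_k)$ with $w := r_{2n+1}$ free, and analogous formulae for $\ell$ in terms of its last component; substituting into $\ell^T r = 1$ and using the identity $(\tilde\nu-\omega_k)^{-2}+(\tilde\nu+\omega_k)^{-2} = 2(\tilde\nu^2+\varkappa_k)/(\tilde\nu^2-\varkappa_k)^2$ reproduces the stated formula for $b_j(\eta)$. The case $\tilde\nu=0$ proceeds analogously and produces $b_0(\eta)$; positivity of $b_0$ and non-negativity of $b_j$ are then immediate from the explicit expressions.

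The remaining point, which is the main (and mild) obstacle of the argument, is the characterisation $b_j(\eta) = 0$ iff $\eta$ is hyperbolic w.r.t.\ $\varkappa_j(\eta)$. Rather than attempting to make sense of the $0/0$-term in the formula for $b_j$ directly, I would track the smooth eigenvector branch through the hyperbolic point. At a hyperbolic direction with $a_j(\eta)=0$ one has $\tilde\nu_j(\eta)=\omega_j(\eta)$ by the discussion preceding Proposition~\ref{prop:2.4}, and direct inspection of the eigenvalue equation $B_1(\eta)r = \omega_j r$ forces the temperature component $w = r_{2n+1}$ to vanish (otherwise the last row reduces to precisely the $\gamma$-degeneracy condition \eqref{eq:2.27}, which has been excluded), so $b_j(\eta)=0$. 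Conversely, at a parabolic direction the strict interlacing of the $\omega_k$ and $\tilde\nu_k$ together with $a_k\neq 0$ for all $k$ keeps every summand in the denominator of $b_j$ finite and non-negative, whence $b_j>0$.
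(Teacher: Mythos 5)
Your approach is correct and matches the route the paper intends: Proposition~\ref{prop:2.4} guarantees that $B_1(\eta)$ is diagonalisable with $2n+1$ simple eigenvalues, the scheme of \cite{JaWi} then yields full expansions, and the coefficients of the $|\xi|^2$-term are obtained by contracting $B_2=\diag(0,\dots,0,\i\kappa)$ with left and right eigenvectors of $B_1(\eta)$. Your explicit computation of $u_k$, $v_k$, and the corresponding left-eigenvector entries, followed by the use of $(\tilde\nu-\omega_k)^{-2}+(\tilde\nu+\omega_k)^{-2}=2(\tilde\nu^2+\varkappa_k)/(\tilde\nu^2-\varkappa_k)^2$, reproduces \eqref{eq:b-def} exactly, and the argument that $w=r_{2n+1}=0$ at a non-$\gamma$-degenerate hyperbolic direction (because $w\ne 0$ would force the excluded condition~\eqref{eq:2.27}) is a clean way to establish $b_j=0$ there. (Incidentally, your formulas only come out with the stated sign if the last row of $B$ carries $-\tfrac{\i\gamma}{2}a_k$ as in \eqref{eq:3.11}, confirming that \eqref{eq:2.9} contains a sign typo.)

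The only small gap is in the converse of the ``iff''. You argue that $b_j>0$ only for parabolic directions, where all $a_k\ne 0$ and the interlacing keeps every denominator $\tilde\nu_j^2-\varkappa_k$ away from zero. But the claim is that $b_j\ne 0$ whenever $a_j(\eta)\ne 0$, which includes directions that are hyperbolic with respect to some other eigenvalue $\varkappa_m$, $m\ne j$. In that case one of the $\tilde\nu_\ell$ collides with $\omega_m$ and the $m$-th term of the defining sum becomes $0/0$; you need to observe that this collision occurs precisely for the index $\ell$ whose eigenvector has vanishing temperature component, and that for all other indices the interlacing inequalities remain strict, so those $b_j$ stay positive. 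With the labelling convention that $\nu_j^\pm(\xi)$ is the branch tending to $\pm\omega_j(\xi)$ at the hyperbolic direction (rather than the fixed sorted-$\tilde\nu$ labelling), this closes the converse and makes the statement precise. This is a minor bookkeeping point, not a conceptual one, and the rest of your argument is essentially the same as the omitted proof, which the paper refers to \cite[Prop.~2.5]{RW07} for.
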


\begin{rem}  Note, that $\trace B(\xi) = \i\kappa|\xi|^2$ implies 
\begin{equation}
   b_0(\eta)+2\sum_{j=1}^n b_j(\eta) =1.
\end{equation}
\end{rem}

Recall that by Proposition~\ref{prop22} eigenvalues of $B(\xi)$ can 
only be real along hyperbolic directions (and then they are exactly 
the 'trivial' real eigenvalues). In combination with the fact 
that eigenvalues of $B(\xi)$ are continuous in $\xi$ we obtain: 

\begin{cor}
  For all parabolic directions $\eta=\xi/|\xi|\in\mathcal U$ we have $\Im \nu_j^\pm (\xi) >0$.
  The same is true as long as $\eta$ is {\em not} hyperbolic w.r.to $\varkappa_j(\eta)$.
\end{cor}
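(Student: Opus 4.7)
The proof combines the small-$|\xi|$ asymptotics of Proposition~\ref{prop:2.5} with the rigidity statement of Proposition~\ref{prop22}, through a continuity argument along the ray $\{r\eta:r>0\}$. Fix $\eta\in\mathcal U$ with $a_j(\eta)\neq 0$ and set $H=\{k:a_k(\eta)=0\}$, so that $j\notin H$. By the last assertion of Proposition~\ref{prop:2.5} we have $b_j(\eta)>0$, whence
\begin{equation*}
  \Im\nu_j^\pm(r\eta)=\kappa r^2 b_j(\eta)+\mathcal O(r^3)>0
\end{equation*}
for all sufficiently small $r>0$; this fixes the positivity at the base of the ray.

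To propagate this sign I would separate real and imaginary parts of the characteristic polynomial~\eqref{eq:CharPolB} for a real candidate $\nu$. The imaginary part equals $-\kappa|\xi|^2\prod_\ell(\nu^2-\varkappa_\ell(\xi))$, which forces any real eigenvalue of $B(\xi)$ to satisfy $\nu=\pm\omega_\ell(\xi)$ for some $\ell$; substituting back into the real part one obtains a factor $a_\ell(\xi)^2$, and concludes that the real spectrum of $B(\xi)$ consists exactly of $\{\pm\omega_k(\xi):k\in H\}$. Dividing the corresponding quadratic factors $(\nu^2-\varkappa_k(\xi))$, $k\in H$, out of \eqref{eq:CharPolB} leaves a reduced polynomial $\tilde P$ of degree $2n+1-2|H|$. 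A direct inspection shows that the imaginary part of $\tilde P$ along $\mathbb R$ is a nonvanishing multiple of $\prod_{\ell\notin H}(\nu^2-\varkappa_\ell(\xi))$; in particular $\tilde P$ does not vanish at any $\pm\omega_k$ with $k\in H$, so the trivial real roots $\pm\omega_k$ of the full characteristic polynomial are simple, and $\tilde P$ itself has no real roots whatsoever.

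The conclusion now follows by a counting argument. As $r$ varies in $(0,\infty)$, the eigenvalues of $B(r\eta)$ form a continuous multiset of $2n+1$ points in $\mathbb C$. Since the trivial real eigenvalues $\pm\omega_k(r\eta)$ remain simple throughout and no other real values occur in $\spec B(r\eta)$, no non-real eigenvalue can cross the real axis; hence the cardinality of $\spec B(r\eta)\cap\mathbb H^+$ is independent of $r>0$. Proposition~\ref{prop:2.5} identifies this number at small $r$ as $2n+1-2|H|$, i.e.\ every non-real eigenvalue starts in $\mathbb H^+$, and therefore remains there for every $r>0$. Since $j\notin H$, the branch $\nu_j^\pm(r\eta)$ is a root of $\tilde P$ and hence non-real, which gives $\nu_j^\pm(r\eta)\in\mathbb H^+$ as asserted. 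The main technical obstacle is precisely the simplicity of $\pm\omega_k$: without it a non-real eigenvalue could slip across $\mathbb R$ by momentarily coalescing with a multiple real root, and Proposition~\ref{prop22} alone would not exclude this.
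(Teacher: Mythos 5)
Your argument is correct and follows the same route the paper takes: Proposition~\ref{prop22} (real spectrum occurs only in hyperbolic directions and then consists exactly of the trivial eigenvalues $\pm\omega_k$), Proposition~\ref{prop:2.5} (positive imaginary part for small $|\xi|$), and continuity along the ray $\{r\eta:r>0\}$. What you add is the explicit factorisation of \eqref{eq:CharPolB} into the trivial quadratic factors $(\nu^2-\varkappa_k(\xi))$, $k\in H$, and a reduced polynomial $\tilde P$, together with the verification that $\tilde P$ has no real zeros and hence that the trivial real roots remain simple; the paper glides over this and only records the separation of hyperbolic from parabolic eigenvalues in a Remark further down. Your simplicity observation is exactly what makes the second assertion (for directions hyperbolic with respect to some $\varkappa_k$, $k\ne j$) airtight, since it excludes the scenario in which a non-real branch touches a trivial real root and re-emerges with the opposite sign of imaginary part.
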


\subsection{Asymptotic expansion of the eigenvalues as $|\xi|\to\infty$} In this case the two-step
procedure developed in \cite[Sec.~2.2]{JaWi}, \cite[Prop.~2.6]{RW07} applies in analogy. 
Essential assumption is the non-degeneracy of $A(\eta)$. We omit the proof and cite 
the corresponding result only.

\begin{prop}\label{prop:2.6}
For all non-degenerate directions the eigenvalues and eigenprojections of the matrix $B(\xi)$ 
have full asymptotic expansions as $|\xi|\to\infty$. The first terms are given by
\begin{subequations}
\begin{align}
  \nu_0(\xi) &= \i\kappa |\xi|^2 - \frac{\i\gamma}{\kappa} +\mathcal O(|\xi|^{-1}),\\
  \nu_j^\pm(\xi) &= \pm|\xi|\omega_j(\eta) + \frac{\i\gamma^2}{2\kappa} a_j^2(\eta) + \mathcal O(|\xi|^{-1}).
\end{align}
\end{subequations}
\end{prop}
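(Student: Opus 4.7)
The plan is to follow the two-step diagonalisation scheme of \cite[Sec.~2.2]{JaWi} (used also in \cite[Prop.~2.6]{RW07}). Decompose $B(\xi) = |\xi| B_1(\eta) + |\xi|^2 B_2(\eta)$, where $B_2(\eta) = \i\kappa\, e_{2n+1} e_{2n+1}^T$ is rank-one and $B_1(\eta)$ collects the hyperbolic diagonal $\pm\omega_j(\eta)$ together with the coupling entries $\i\gamma a_j(\eta)$ and $\tfrac{\i\gamma}{2} a_j(\eta)$. After extracting the factor $|\xi|^2$, the matrix $|\xi|^{-2}B(\xi) = B_2(\eta)+|\xi|^{-1}B_1(\eta)$ becomes a smooth perturbation of $B_2(\eta)$ in the small parameter $|\xi|^{-1}$, which is the natural setting for the two-step scheme.

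\emph{Step~1 (block-diagonalisation).} The matrix $B_2(\eta)$ has two spectrally separated eigenvalues: the simple eigenvalue $\i\kappa$ with eigenvector $e_{2n+1}$, and the $2n$-fold eigenvalue $0$ with eigenspace $\spann\{e_1,\dots,e_{2n}\}$. Because they are separated by $\kappa>0$, the standard perturbation construction of \cite[Sec.~2.2]{JaWi} produces a smooth family $N(\xi)\sim I+\sum_{k\ge 1}|\xi|^{-k}N^{(k)}(\eta)$ with
\begin{equation*}
  N(\xi)^{-1}B(\xi)N(\xi) = \bdiag\bigl(B_{\mathrm{par}}(\xi),\,B_{\mathrm{hyp}}(\xi)\bigr) + \mathcal O(|\xi|^{-\infty}),
\end{equation*}
where $B_{\mathrm{par}}(\xi)$ is scalar with leading term $\i\kappa|\xi|^2$ and $B_{\mathrm{hyp}}(\xi)$ is a $2n\times 2n$ block with leading term $|\xi|\,\diag(\omega_1,\dots,\omega_n,-\omega_1,\dots,-\omega_n)(\eta)$.

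\emph{Step~2 (diagonalisation of the hyperbolic block).} Non-degeneracy of $\eta$ means that $\omega_1(\eta),\dots,\omega_n(\eta)$ are distinct and positive, so the leading term of $B_{\mathrm{hyp}}(\xi)$ has $2n$ distinct real eigenvalues $\pm|\xi|\omega_j(\eta)$. A second application of the simple-eigenvalue perturbation scheme then yields full asymptotic expansions of the eigenvalues $\nu_j^\pm(\xi)$ and of the associated eigenprojections. To extract the explicit first corrections, one substitutes the ansatz $\nu_0(\xi)=\i\kappa|\xi|^2+c_0+\mathcal O(|\xi|^{-1})$ into the Kelvin-type relation \eqref{eq:KelvinCharEq}, expands in inverse powers of $|\xi|$, and uses $\sum_j a_j^2(\xi)=|\xi|^2$ to read off $c_0$. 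For the hyperbolic branches the ansatz $\nu_j^\pm(\xi)=\pm|\xi|\omega_j(\eta)+c_j^\pm+\mathcal O(|\xi|^{-1})$ is substituted into \eqref{eq:CharPolB}; near this eigenvalue the factor $(\nu^2-\varkappa_j(\xi))$ is of order $|\xi|$ while $(\nu^2-\varkappa_k(\xi))$ for $k\neq j$ is of order $|\xi|^2$, and balancing the leading powers of $|\xi|$ isolates $c_j^\pm$.

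The main obstacle lies in the inductive bookkeeping of Step~1: at each order $k$ one has to solve a Sylvester-type equation $[B_2(\eta),N^{(k)}(\eta)]=R^{(k)}(\eta)$ for the off-block-diagonal part of $N^{(k)}$. Solvability is exactly where the spectral gap of $B_2(\eta)$ enters, since the commutator $[B_2(\eta),\cdot]$ is invertible on off-block-diagonal matrices if and only if $\i\kappa$ and $0$ are separated. Once this structural step is in place, the remaining manipulations are routine applications of standard perturbation theory, and the expansions stated in Proposition~\ref{prop:2.6} follow.
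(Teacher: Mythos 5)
Your proposal follows exactly the two-step diagonalisation scheme of \cite[Sec.~2.2]{JaWi} and \cite[Prop.~2.6]{RW07} to which the paper refers (the paper itself omits the proof), and the details you supply — separating the simple eigenvalue $\i\kappa$ of $B_2(\eta)$ from the $2n$-fold zero eigenvalue, then using non-degeneracy to split the hyperbolic block, then reading off the leading corrections from \eqref{eq:KelvinCharEq} and \eqref{eq:CharPolB} — are precisely the intended argument. One small point worth noting: carrying out the computation for $c_0$ as you describe (or, alternatively, taking $\trace B(\xi)=\i\kappa|\xi|^2$ and subtracting the $2n$ hyperbolic corrections $\tfrac{\i\gamma^2}{2\kappa}a_j^2(\eta)$) yields $c_0=-\tfrac{\i\gamma^2}{\kappa}$, so the paper's displayed $-\tfrac{\i\gamma}{\kappa}$ appears to be a typo for $-\tfrac{\i\gamma^2}{\kappa}$.
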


\begin{rem} Despite the fact that we used the same notation for the eigenvalues 
as $\xi\to0$ and $|\xi|\to\infty$, we do not claim that they are indeed the same 
functions of $\xi$. This is only true for hyperbolic eigenvalues near hyperbolic 
directions, in general there might be multiplicities in between and there might 
be no consistent notation for these functions.
\end{rem}

\begin{cor}\label{cor:2.8}
   For all parabolic directions $\eta=\xi/|\xi|$ the eigenvalues of $B(\xi)$ 
satisfy $\Im \nu(\eta) \ge C_\eta > 0$ for $|\xi|\ge c$. The same is true for 
parabolic eigenvalues in hyperbolic directions.
\end{cor}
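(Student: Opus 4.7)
The plan is to read the corollary directly off the asymptotic expansion given by Proposition~\ref{prop:2.6}. Fix a non-degenerate direction $\eta\in\mathcal U$ and consider the eigenvalues of $B(\xi)$ along the ray $\xi=|\xi|\eta$. According to Proposition~\ref{prop:2.6}, these may be labelled as $\nu_0(\xi)$ and $\nu_j^\pm(\xi)$, $j=1,\dots,n$, with
\begin{equation*}
\Im\nu_0(\xi)=\kappa|\xi|^2+\mathcal O(1),\qquad \Im\nu_j^\pm(\xi)=\frac{\gamma^2}{2\kappa}a_j^2(\eta)+\mathcal O(|\xi|^{-1}).
\end{equation*}

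For every index $j$ with $a_j(\eta)\ne 0$ the leading imaginary part is strictly positive, so I would fix a threshold $c=c(\eta)$ large enough that the remainder in the corresponding expansion is bounded by half of $\tfrac{\gamma^2}{2\kappa}a_j^2(\eta)$; this yields $\Im\nu_j^\pm(\xi)\ge \tfrac{\gamma^2}{4\kappa}a_j^2(\eta)>0$ for $|\xi|\ge c$. The zeroth branch $\nu_0$ is handled trivially and separately, since its imaginary part grows quadratically in $|\xi|$. Taking the minimum over the finitely many branches with $a_j(\eta)\ne 0$, together with $\nu_0$, produces the constant $C_\eta$.

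If $\eta$ is parabolic, every index satisfies $a_j(\eta)\ne 0$, so the bound applies to every eigenvalue of $B(\xi)$ and the first claim is immediate. If instead $\eta$ is hyperbolic with respect to some $\varkappa_{j_0}$, then exactly one pair of branches, $\nu_{j_0}^\pm$, has vanishing leading imaginary part and is in fact purely real along the ray $\xi=|\xi|\eta$ by Proposition~\ref{prop22}; these are the hyperbolic branches, which are explicitly excluded from the statement. The remaining branches $\nu_k^\pm$, $k\ne j_0$, are precisely what is meant by \emph{parabolic eigenvalues in a hyperbolic direction}, and the same argument produces the desired lower bound for them.

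No essential difficulty is expected; the only work is bookkeeping, namely matching the informal terminology to the index-wise vanishing of the coupling functions $a_j$, and observing that the threshold $c$ depends only on $\eta$. Since the corollary is pointwise in $\eta$, no uniformity across $\eta$ is required, and the argument does not need any input beyond the expansion of Proposition~\ref{prop:2.6}.
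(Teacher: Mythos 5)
Your argument is correct and, as far as one can tell, reproduces what the paper leaves implicit: the corollary follows by reading the imaginary parts off the large-$|\xi|$ expansion of Proposition~\ref{prop:2.6}, noting that in a parabolic direction every leading coefficient $\frac{\gamma^2}{2\kappa}a_j^2(\eta)$ is strictly positive, while in a hyperbolic direction only the branch $\nu_{j_0}^\pm$ (which is purely real by Proposition~\ref{prop22}) drops out. Identifying the ``parabolic eigenvalues in hyperbolic directions'' with the branches $k\ne j_0$ is exactly right.

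One small point worth stating explicitly. You choose the threshold $c$ large and $\eta$-dependent, so that the remainder in the asymptotic is dominated. That is certainly the cleanest reading, and since the corollary is placed immediately after Proposition~\ref{prop:2.6} it is probably the intended one. But note that the subsequent Proposition~\ref{prop:2.9} and the application in the sketch of Lemma~\ref{lem:4.1} treat ``intermediate frequencies'' and require a lower bound on $\Im\nu$ for \emph{all} $|\xi|\ge\epsilon$ with $\epsilon$ arbitrary. To cover that stronger reading with the same corollary, the asymptotic alone is not enough: on the compact annulus $\{c\le|\xi|\le |\xi|_0(\eta)\}$ between the chosen $c$ and the onset of the expansion, you would invoke the earlier (unnumbered) corollary, $\Im\nu(\xi)>0$ in parabolic directions, together with continuity of the eigenvalues in $\xi$ to obtain a positive infimum. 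So the full argument is: strict positivity plus continuity give a lower bound on compacta, and Proposition~\ref{prop:2.6} controls the tail $|\xi|\to\infty$. Under your (weaker, and likely intended) reading with $c=c(\eta)$ large, the proof as written is complete.
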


\begin{rem}
In particular,
we see by the asymptotic expansions that the eigenvalues of $B(\xi)$ are simple 
for large and also for small values of $|\xi|$. Furthermore, we see that the 
hyperbolic eigenvalues are always separeted (i.e. if multiplicities occur in 
hyperbolic directions, they involve only parabolic eigenvalues).
\end{rem}

\subsection{Behaviour of the imaginary part} The asymptotic expansions of 
Propositions~\ref{prop:2.5} and~\ref{prop:2.6} allow to draw conclusions for 
the behaviour of the imaginary part. We collect them for later use. The first 
result is apparent.

\begin{prop}\label{prop:2.9}
   On any compact set of parabolic directions we have the uniform estimates
   \begin{align}
    & \Im\nu_j^{(\pm)}(\xi) \ge C_\epsilon\qquad &&\text{for all $|\xi|\ge \epsilon$,}\\
     & \Im\nu_j^{(\pm)}(\xi) \sim b_j(\eta)|\xi|^2 \qquad &&\text{for all $|\xi|\le \epsilon$}
   \end{align}
   for all eigenvalues of $B(\xi)$ and arbitrary $\epsilon>0$.
\end{prop}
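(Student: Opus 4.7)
The plan is to combine the two asymptotic regimes established in Propositions~\ref{prop:2.5} and~\ref{prop:2.6} with a compactness argument that fills in the intermediate range of frequencies. Fix a compact subset $K\subset\mathcal U$ of parabolic directions. Since each coupling function $a_j$ is continuous and non-vanishing on $K$, there is $c_0>0$ with $a_j^2(\eta)\ge c_0$ for all $\eta\in K$ and all $j$. By the last assertion of Proposition~\ref{prop:2.5} the same positivity transfers to $b_j(\eta)$, so $b_j(\eta)\ge c_1>0$ uniformly on $K$.

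For the small-frequency estimate I would read off the expansion of Proposition~\ref{prop:2.5} and verify that the remainder term is uniform in $\eta\in K$. This uniformity is a consequence of the standard diagonalisation scheme cited after Proposition~\ref{prop:2.4}: on the compact set $K$ the matrix $B_1(\eta)$ has eigenvalues bounded away from one another, so the successive corrector matrices together with their inverses are uniformly bounded. Consequently there exists $\epsilon_0>0$ such that
\begin{equation}
   \Im\nu_j^\pm(\xi) = \kappa b_j(\eta)|\xi|^2 + \mathcal O(|\xi|^3),\qquad |\xi|\le\epsilon_0,\ \eta\in K,
\end{equation}
uniformly, and the same for $\nu_0$. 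Shrinking $\epsilon_0$ if necessary absorbs the cubic remainder into a fraction of the leading term and gives $\Im\nu_j^{(\pm)}(\xi)\sim b_j(\eta)|\xi|^2$ for $|\xi|\le\epsilon_0$, which proves the second estimate (and incidentally yields $\Im\nu_j^{(\pm)}(\xi)\ge c\epsilon^2$ on $\epsilon\le|\xi|\le\epsilon_0$ for any $\epsilon\le\epsilon_0$).

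At the other end of the spectrum I would invoke Proposition~\ref{prop:2.6}: again uniformity on $K$ of the remainders follows from the two-step scheme together with the non-degeneracy assumption on $A(\eta)$. Since $a_j^2(\eta)\ge c_0>0$ on $K$, the main terms of $\Im\nu_j^\pm$ are bounded below by $\gamma^2 c_0/(2\kappa)>0$, while $\Im\nu_0\sim\kappa|\xi|^2$ is much larger. Thus there is $R>0$ such that $\Im\nu_j^{(\pm)}(\xi)\ge C$ for all $|\xi|\ge R$ and all $\eta\in K$.

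The main obstacle is the intermediate range $\epsilon_0\le|\xi|\le R$, where no asymptotic expansion is available. Here I would argue by continuity and compactness. By Proposition~\ref{prop22} the matrix $B(\xi)$ has no real eigenvalue when $\eta$ is parabolic, so $\Im\nu_j^{(\pm)}(\xi)\ne 0$ on $K\times[\epsilon_0,R]$. The short-frequency asymptotics just established show that the sign is positive near $|\xi|=\epsilon_0$, and continuity of the eigenvalues (locally given by the implicit function theorem away from the finitely many branch points which, by the concluding Remark of Section~2.3, involve only parabolic eigenvalues and are isolated) then forces $\Im\nu_j^{(\pm)}>0$ throughout. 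A continuous strictly positive function on the compact set $K\times[\epsilon_0,R]$ attains a positive minimum, which together with the small- and large-frequency bounds yields the uniform lower bound $\Im\nu_j^{(\pm)}(\xi)\ge C_\epsilon$ for $|\xi|\ge\epsilon$ and completes the proof.
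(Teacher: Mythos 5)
Your overall strategy --- read off the two asymptotic regimes from Propositions~\ref{prop:2.5} and~\ref{prop:2.6}, check uniformity of the remainders on the compact set, and then close the intermediate frequency band by compactness --- is exactly the route the paper intends when it calls the statement ``apparent.''\ The small- and large-frequency steps, including the positivity of $b_j$ from the last assertion of Proposition~\ref{prop:2.5} and the uniform separation of the eigenvalues of $B_1(\eta)$ on $K$ guaranteeing uniform remainders, are all correct.

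One justification you give for the intermediate band is, however, not supported by the paper and is in fact contradicted by it. You claim that eigenvalue continuity follows from ``the implicit function theorem away from the finitely many branch points which, by the concluding Remark of Section~2.3, involve only parabolic eigenvalues and are isolated.''\ That Remark says nothing about branch points being finite or isolated; it only states that when multiplicities occur in hyperbolic directions they involve only parabolic eigenvalues. Indeed, the Remark after Proposition~\ref{prop:2.6} explicitly warns that ``in general there might be multiplicities in between and there might be no consistent notation for these functions,'' so you cannot assume individual eigenvalue branches are continuously defined on all of $K\times[\epsilon_0,R]$. Fortunately, you do not need this. The Corollary following Proposition~\ref{prop:2.5} already establishes, from Proposition~\ref{prop22} together with continuity of the spectrum as a multi-set (a continuous family of root sets of the characteristic polynomial), that $\Im\nu>0$ for every eigenvalue of $B(\xi)$, every $\xi\ne0$ with parabolic direction. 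Consequently the function $(\eta,|\xi|)\mapsto\min\{\Im\nu:\nu\in\spec B(|\xi|\eta)\}$ is continuous and strictly positive on the compact set $K\times[\epsilon_0,R]$, hence bounded below by a positive constant, and no sign-propagation through branch points is required. With that replacement, the argument is complete and matches the paper's intent.
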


The next statement is concerned with a tubular neighbourhood of a compact 
subset of a regular submanifold $M_j$ of hyperbolic eigenvalues w.r.to 
$\varkappa_j(\eta)$. It is only of interest how the corresponding hyperbolic 
eigenvalues $\nu_j^\pm(\xi)$ behave, the others still satisfy Proposition~\ref{prop:2.9}.

\begin{prop} 
Uniformly on any tubular neighbourhood of a compact subset of $M_j$ of 
non-$\gamma$-degenerate directions the corresponding hyperbolic eigenvalues 
$\nu_j^\pm(\xi)$ satisfy the estimates 
  \begin{align}
    & \Im\nu_j^\pm(\xi) \sim a_j^2(\eta) \qquad &&\text{for all $|\xi|\ge \epsilon$,}\\
     & \Im\nu_j(\xi) \sim b_j(\eta)|\xi|^2 \qquad &&\text{for all $|\xi|\le \epsilon$.}
   \end{align}
\end{prop}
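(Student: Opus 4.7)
The plan is to split the tubular neighbourhood into a low-frequency part $\{|\xi|\le\epsilon\}$ and a high-frequency part $\{|\xi|\ge\epsilon\}$ and to apply, in each regime, the asymptotic expansions provided by Propositions~\ref{prop:2.5}, \ref{prop:2.3} and~\ref{prop:2.6}, all of which are uniform on compact sets of non-$\gamma$-degenerate directions. Note at the outset that for $\eta\in M_j$ the hyperbolic eigenvalue $\nu_j^\pm(\xi)=\pm\omega_j(\xi)$ is purely real (Proposition~\ref{prop22}), so both sides of each claimed estimate vanish exactly on $M_j$ and uniformity must account for this joint vanishing.

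In the small-frequency regime, I would take the imaginary part of Proposition~\ref{prop:2.5} to obtain $\Im\nu_j^\pm(\xi)=\kappa b_j(\eta)|\xi|^2+\mathcal{O}(|\xi|^3)$. A preliminary step is to show $b_j(\eta)\asymp a_j^2(\eta)$ near $M_j$: the implicit equation \eqref{eq:2.24} for $\tilde\nu_j(\eta)$ yields $\tilde\nu_j^2(\eta)-\varkappa_j(\eta)\asymp a_j^2(\eta)$ under the non-$\gamma$-degeneracy assumption, so the $k=j$ term dominates in the explicit formula \eqref{eq:b-def} for $b_j(\eta)$. A closer inspection of the diagonalisation scheme should then upgrade the $\mathcal{O}(|\xi|^3)$ remainder to $\mathcal{O}(b_j(\eta)|\xi|^3)$ in its imaginary part (the responsible block carrying $b_j(\eta)$ as a factor); division by $b_j(\eta)|\xi|^2$ together with $\epsilon$ small then gives the desired two-sided bound uniformly.

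For the large-frequency regime I would further split into a compact window $\{\epsilon\le|\xi|\le R\}$ and the tail $\{|\xi|\ge R\}$. On the window, Proposition~\ref{prop:2.3} says that $\Im\nu_j^\pm(\xi)/a_j^2(\eta)$ extends continuously to $M_j$ with strictly positive limit $D_{\bar\eta}|\xi|^2/[2\omega_j(\bar\eta)(C_{\bar\eta}^2+D_{\bar\eta}^2|\xi|^2)]$, which by compactness yields uniform two-sided bounds on a sufficiently narrow tubular neighbourhood. On the tail, Proposition~\ref{prop:2.6} gives $\Im\nu_j^\pm(\xi)=\frac{\gamma^2}{2\kappa}a_j^2(\eta)+\mathcal{O}(|\xi|^{-1})$ uniformly in $\eta$. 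The consistency of the two expansions---the $|\xi|\to\infty$ limit of the ratio from Proposition~\ref{prop:2.3} equals $\gamma^2/(2\kappa)$, matching Proposition~\ref{prop:2.6}---allows them to be glued along $|\xi|=R$ by choosing $R$ so large that the $\mathcal{O}(|\xi|^{-1})$ error is dominated by the leading term wherever $a_j^2(\eta)\neq 0$; on $M_j$ both sides vanish identically.

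The real difficulty in both regimes is the joint vanishing of $\Im\nu_j^\pm$, $a_j^2(\eta)$ and $b_j(\eta)$ at $M_j$, which rules out any direct compactness argument. The essential technical step is therefore $b_j(\eta)\asymp a_j^2(\eta)$ near $M_j$ together with the refinement of the remainder in Proposition~\ref{prop:2.5} to carry a $b_j(\eta)$ factor; this is what makes the uniformity go through in the low-frequency estimate, while the matching of leading constants between Propositions~\ref{prop:2.3} and~\ref{prop:2.6} handles the high-frequency case.
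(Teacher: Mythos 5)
Your proposal correctly identifies the central obstruction --- the joint vanishing of $\Im\nu_j^\pm$, $a_j^2(\eta)$ and $b_j(\eta)$ on $M_j$ --- and the key technical fact that $b_j(\eta)\asymp a_j^2(\eta)$ near $M_j$; both of these ingredients appear in the paper's proof. Your high-frequency argument (gluing the ratio $\Im\nu_j^\pm/a_j^2$ from Proposition~\ref{prop:2.3} on a compact window $\{\epsilon\le|\xi|\le R\}$ to the tail expansion from Proposition~\ref{prop:2.6}) is sound and amounts to the same compactness argument the paper carries out.

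The gap is in the low-frequency regime. Proposition~\ref{prop:2.5} gives $\Im\nu_j^\pm(\xi)=\kappa b_j(\eta)|\xi|^2+\mathcal O(|\xi|^3)$ with remainder $\mathcal O(|\xi|^3)$ uniform in $\eta$, but carrying no factor of $b_j(\eta)$; dividing by $b_j(\eta)|\xi|^2$ produces $\kappa+\mathcal O(|\xi|)/b_j(\eta)$, which blows up as $\eta\to M_j$. You observe this and assert that ``a closer inspection of the diagonalisation scheme should then upgrade the $\mathcal O(|\xi|^3)$ remainder to $\mathcal O(b_j(\eta)|\xi|^3)$''. That is precisely the non-trivial step, not a routine bookkeeping matter: the higher-order corrections produced by the recursive block-diagonaliser are not manifestly proportional to $b_j(\eta)$, and nothing in the statement of Proposition~\ref{prop:2.5} supplies that factor.

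The paper closes this gap by reversing the order of operations. Instead of expanding $\Im\nu_j^\pm$ and then trying to extract a factor, it first writes $\Im\nu_j^\pm(\xi)=a_j^2(\xi)\,K(\xi)$, where $K$ extends continuously and positively across $M_j$ --- this factorisation is exactly the content of Proposition~\ref{prop:2.3}, read not just as a limit statement but as the existence of the smooth quotient $K$. Because $\Im\nu_j^\pm$ has full asymptotic expansions at $|\xi|\to0$ and $|\xi|\to\infty$, so does $K$, with leading terms $\kappa|\xi|^2\,b_j(\eta)/a_j^2(\eta)$ and $\gamma^2/(2\kappa)$ respectively; the uniform two-sided bounds for $b_j(\eta)/a_j^2(\eta)$ (proved from the representation of $b_j$ and \eqref{eq:2.24}, using non-$\gamma$-degeneracy) then control $K/|\xi|^2$ at small $|\xi|$ and $K$ at large $|\xi|$, and compactness finishes the argument. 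With the $a_j^2$ factor pulled out at the start, the remainder automatically carries it --- which is exactly the ``upgrade'' you needed but did not supply. Your ingredients are the right ones, but they must be assembled in this order: factorise first via Proposition~\ref{prop:2.3}, expand second.
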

\begin{proof} By Proposition~\ref{prop:2.3} we know that
\begin{equation}
   \Im \nu_j^{\pm} (\xi) = a_j^2(\xi) K(\xi) 
\end{equation}
for some function $K(\xi)$. Our aim is to estimate $K(\xi)$. The left hand of 
this formula has a full asymptotic expansion as $|\xi|\to0$ and $|\xi|\to\infty$. 
Therefore, also the right hand side has one and
it follows that
\begin{subequations}
\begin{align}
K(\xi) &= \frac{\gamma^2}{2\kappa} + \mathcal O(|\xi|^{-1}),\qquad &&|\xi|\to\infty,\\
K(\xi) &= \kappa|\xi|^2\frac{b_j(\eta)}{a_j^2(\eta)}+ \mathcal O(|\xi|^{-3}),\qquad &&|\xi|\to0.
\end{align}
\end{subequations} 
Thus, the desired estimate follows by a compactness argument as soon as we 
have a uniform lower/upper bound for $b_j(\eta) / a_j^2(\eta)$. The representation 
of $b_j(\eta)$ in Proposition~\ref{prop:2.5} in combination with \eqref{eq:2.24} implies
\begin{align}
  \lim_{\eta\to M_j} \frac{a_j^2(\eta)}{b_j(\eta)} = & 
  \lim_{\eta\to M_j} 
  \gamma^2(\tilde\nu_j^2+\varkappa_j(\eta))  \frac{a_j^4(\eta)}{(\tilde\nu_j^2-\varkappa_j(\eta))^2}\notag\\
   &+ \lim_{\eta\to M_j} a_j^2(\eta)\left(1 + \gamma^2\sum_{k\ne j} a_k^2(\eta) \frac{\tilde\nu_j^2+\varkappa_k(\eta)}{(\tilde\nu_j^2-\varkappa_k(\eta))^2} 
  \right)\notag\\
  =&2\gamma^2\varkappa_j(\bar\eta) \left(1-\gamma^2\sum_{j\ne k} \frac{a_k^2(\bar\eta)}{\varkappa_j(\bar\eta)-\varkappa_k(\bar\eta)} \right)^2,
\end{align}
which is clearly bounded and (uniformly) positive on any compact subset of 
$M_j$ (where we have to use that $\bar\eta\in M_j$ is not $\gamma$-degenerate).
\end{proof}

\subsection{Conclusions} We will draw several conclusions what we have 
obtained so far and what we still have to consider in the remaining part 
of this treatise.

\subsubsection{Cubic media in 3D} If we consider the special case of cubic 
media in three space dimensions degenerate directions are given by 
$\bar\eta=(\bar\eta_1,\bar\eta_2,\bar\eta_3)^T$ with $\bar\eta_1^2=\bar\eta_2^2=\bar\eta_3^2$  (eight directions, corresponding to the corners of a cube) or $\bar\eta_i^2=1$ for some $i$ (six directions, 
corresponding to its faces). This can be calculated directly, corresponding 
eigenspaces are $\spann \{\bar\eta\}$ and $\bar\eta^\perp=\{\xi\in\R^n\;|\; \bar\eta\cdot\xi=0\}$, 
or concluded by the cubic symmetry\footnote{$A(\xi)$ is invariant 
  under the hexaeder group, i.e. the symmetry group of a cube. Thus, 
  eigenspaces must be transferred in an appropriate way, which implies 
  that symmetries of order 3 or 4 can only be realised by higher 
  dimensional eigenspaces.} of $A(\xi)$ in this particular case. See Figure~\ref{fig2}.

To obtain the hyperbolic directions we apply Proposition~\ref{prop:2.1} and 
look for the action of $\eta$ under $A(\eta)$. We obtain that 
\begin{enumerate}
\item a direction $\eta$ is hyperbolic if and only if
\begin{equation}
\det(\eta|A(\eta)\eta|A^2(\eta)\eta)= (\tau-\lambda-2\mu)^3 \eta_1\eta_2\eta_3(\eta_1^2-\eta_2^2)(\eta_1^2-\eta_3^2)(\eta_2^2-\eta_3^2)=0,
\end{equation}
thus the set of hyperbolic directions is the union of nine great circles on $\S^2$;
\item $\eta||A(\eta)\eta$ for all 26 intersection points of these great 
  circles, 14 of them are excluded as 
being degenerate. 
\end{enumerate}
Except for these 14 points on $\S^2$ we obtained an almost complete description 
of the spectrum of $B(\xi)$. We know full asymptotic expansions of eigenvalues 
for small and large frequencies $|\xi|$, estimates for the imaginary part of 
them and similar statements for eigenprojections. This information allows to 
draw conclusions on the large time behaviour of solutions, e.g. energy and 
dispersive estimates. This can be done similar to the treatment of \cite{RW07}, 
see Section~\ref{sec4}. The remaining degenerate directions appear in two types, 
which can be interchanged by the action of the symmetry group. The study of 
these degenerate directions is what is left open so far and will be the main 
point of Section~\ref{sec3}.
\begin{figure}
\includegraphics[width=6cm]{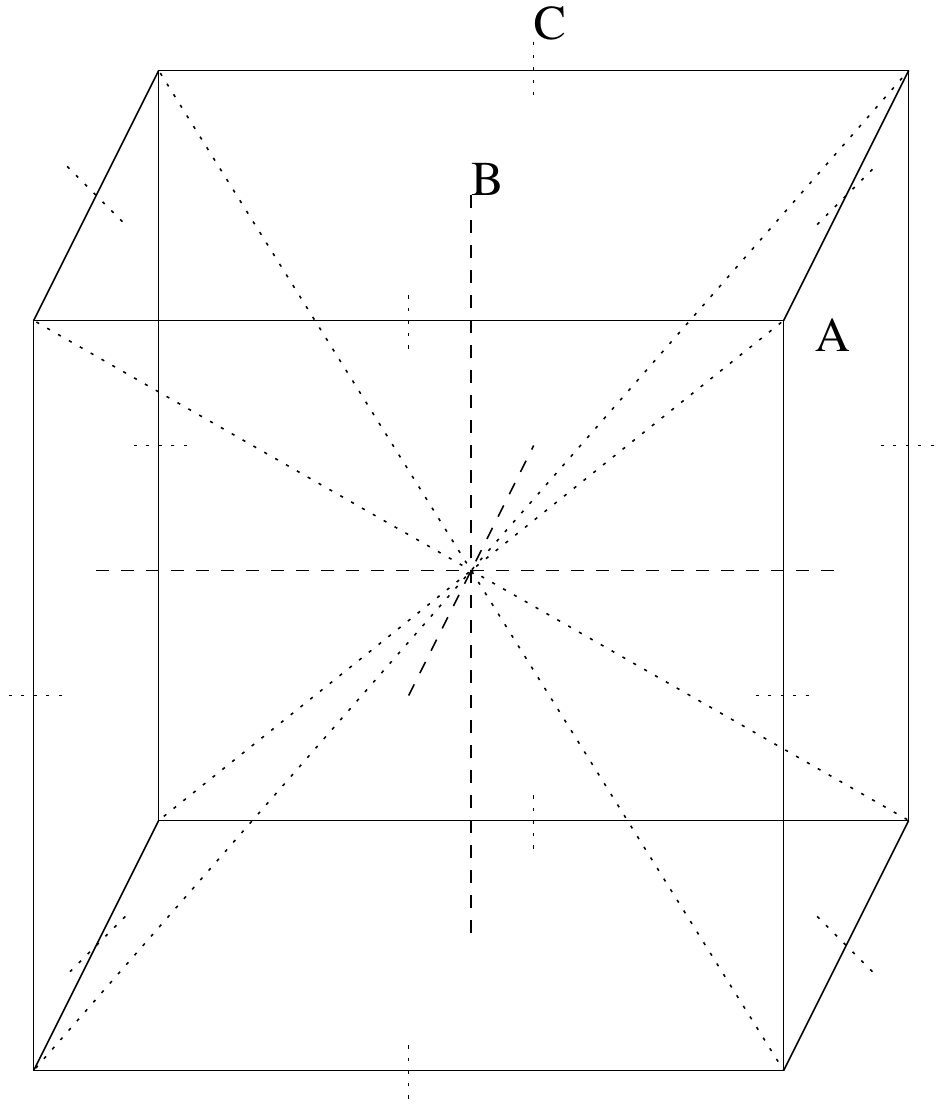}
\caption{Degenerate points for cubic media correspond to symmetries of a cube. 
Corner points A are conic singularities, midpoints of faces B uniplanar 
singularities of $\spec A(\eta)$. The midpoints of edges C are 
non-degenerate, but hyperbolic with respect to two different eigenvalues.}\label{fig2}
\end{figure}

\subsubsection{Isotropic media}
If we consider the special case of isotropic media, $A(\eta)=\mu I+(\lambda+\mu)\eta\otimes\eta$, we 
see that $\spec A(\eta)=\{\mu,\lambda+2\mu\}$ and corresponding eigenspaces are 
$\mathrm{span}\,\{\eta\}$ (corresponding to $\lambda+\mu$) and $\eta^\perp$ 
(corresponding to $\mu$). All directions are (elastically) degenerate. However, 
we still find locally smooth systems of eigenvectors. All directions are 
hyperbolic and the hyperbolic eigenvalue $\mu$ has multiplicity $n-1$. Therefore
the system $\D_tV=B(\xi)V$ decouples into a diagonal part of size $2n-2$ and 
a full $3\times3$ block and is given after a rearrangement of the entries as
\begin{equation}
  B(t,\xi) = \begin{pmatrix} 
 \sqrt \mu|\xi| & & & & & & & \\
  & \ddots & & & & & &\\
  & & -\sqrt\mu|\xi| & & & & &\\
  & & & \ddots & & & &\\
  & & & & \sqrt{\lambda+2\mu}|\xi| & & \i\gamma|\xi| \\
    & & &  & & -\sqrt{\lambda+2\mu}|\xi| & \i\gamma|\xi| \\
  & & &  & -\frac{\i\gamma}2|\xi|& -\frac{\i\gamma}2|\xi| & \i\kappa|\xi|^2
  \end{pmatrix}.
\end{equation}
This block structure corresponds to the Helmholtz decomposition of vector 
fields applied to the elastic displacement. If $\nabla\cdot U(t,\cdot)=0$ 
the lower block cancels and we obtain wave equations
with speed $\sqrt\mu$ for the components of $U$. Otherwise, if we cancel 
the upper block we obtain the $3\times3$ system corresponding to one-dimensional 
thermo-elasticity with its well-known properties.

\subsubsection{One-dimensional thermo-elasticity}\label{ssec:1d} For completeness 
we mention some results on the one-dimensional system
\begin{subequations}
\begin{align}
 u_{tt} - \tau^2 u_{xx} +\gamma \theta_x &=0,\\
 \theta_t-\kappa\theta_{xx} + \gamma u_{tx}&=0.
\end{align}
\end{subequations}
We assume $\gamma,\kappa,\tau>0$. Following our strategy we can rewrite this 
problem as first order system. The corresponding symbol
$B(\xi)$ is given by
\begin{equation}\label{eq:B-1d}
  B(\xi) = \begin{pmatrix} \tau \xi &&\i\gamma\xi\\&-\tau\xi&\i\gamma\xi\\
  -\frac\i2\gamma\xi&-\frac\i2\gamma\xi&\i\kappa\xi^2\end{pmatrix}.
\end{equation}
Its eigenvalues satisfy asymptotic expansions for $\xi\to0$ and 
$\xi\to\pm\infty$. Propositions~\ref{prop:2.5} and~\ref{prop:2.6} 
apply with $\tilde\nu^\pm=\pm\sqrt{\tau^2+\gamma^2}$ and 
\begin{equation}
b_0 = \frac{\tau^2}{\tau^2+\gamma^2},\qquad b_1=\frac12\frac{\gamma^2}{\tau^2+\gamma^2}.
\end{equation}
Therefore, by Proposition~\ref{prop:2.5}
\begin{subequations}\label{eq:2.43}
\begin{align}
  \nu_0(\xi) &= \i \frac{\kappa\tau^2}{\tau^2+\gamma^2}\xi^2 + \mathcal O(\xi^3),\\
  \nu_1^\pm(\xi) &= \pm\sqrt{\tau^2+\gamma^2} \xi + \i \frac{\kappa\gamma^2}{2(\tau^2+\gamma^2)} \xi^2+\mathcal O(\xi^3),
\end{align}
as $\xi\to0$ and by Proposition~\ref{prop:2.6}
\begin{align}
  \nu_0(\xi) &= \i \kappa \xi^2 -\i\frac{\gamma}{\kappa} + \mathcal O(\xi^{-1}),\\
  \nu_1^\pm(\xi) &= \pm\tau \xi + \i \frac{\gamma^2}{2\kappa} +\mathcal O(\xi^{-1}),
\end{align}
\end{subequations}
as $\xi\to\infty$. The essential information for large time estimates 
is given by the behaviour of the imaginary part. It follows that 
$\Im \nu(\xi) > C_\epsilon$ for $|\xi|\ge \epsilon$ for certain constants and 
\begin{equation}\label{eq:2.44}
  \Im \nu_0(\xi) \sim \frac{\kappa\tau^2}{\tau^2+\gamma^2}\xi^2,\qquad 
  \Im \nu_1^\pm(\xi) \sim  \frac{\kappa\gamma^2}{2(\tau^2+\gamma^2)} \xi^2,\qquad \xi\to0.
\end{equation}

\subsubsection{Hexagonal media in 3D}\label{sec:2:hex}
For hexagonal media in three space dimensions
the situation is (surprisingly) simpler than for cubic media. The elastic 
operator defined by \eqref{eq:1.4}--\eqref{eq:1.5} is invariant under rotation 
around the $x_3$-axis (taking into account a corresponding rotation of the
reference frame for vectors) and therefore it suffices to understand its 
cross sections in the $x_1$--$x_2$ plane. We will sketch some of the properties
of the corresponding symbol $A(\eta)$.

Following Proposition~\ref{prop:2.1} we obtain
\begin{enumerate}
\item that
  \begin{equation}
    \det (\eta | A(\eta)\eta | A^2(\eta)\eta )=0,
  \end{equation}
  such that {\em all} directions $\eta\in\mathbb S^2$ are hyperbolic. The 
  corresponding eigenspace is (generically) given by multiples of
  $(\eta_2,-\eta_1,0)$ such that the hyperbolic eigenvalue is
  \begin{equation}\label{eq:hexa-hyp-eig}
    \frac{\tau_1-\lambda_1}2 (\eta_1^2+\eta_2^2) + \mu \eta_3^2.
  \end{equation}
\item 
  It remains to look for directions with two hyperbolic eigenvalues. They
  satisfy $\eta||A(\eta)$. This is true, if $\eta_3=0$ or if $\eta_1=\eta_2=0$
  or if 
  \begin{equation}\label{eq:hexa-deg-circ}
    \eta_3^2 = \frac{\lambda_2 + 
      2 \mu - \tau_1} {2\lambda_2 + 
      4 \mu + \tau_1-\tau_2}, 
  \end{equation}
  provided the latter expression is non-negative. Except in the limiting case
  $\tau_1=\lambda_2+2\mu$, the coupling functions vanish to first order along
  the corresponding circle. If $\tau_1=\tau_2=\lambda_2+2\mu$ all directions are  hyperbolic with two hyperbolic eigenvalues and if  
  $\tau_1=\lambda_2+2\mu\ne \tau_2$ coupling functions vanish to third order.
\item 
  The matrices $A(\eta)$ are invariant under rotation. Introducing spherical
  coordinates on $\mathbb S^2$ 
  \begin{equation}
    \eta = \begin{pmatrix}1\\0\\0\end{pmatrix}\cos\phi\cos\psi
    + \begin{pmatrix}0\\1\\0\end{pmatrix}\sin\phi\cos\psi
    + \begin{pmatrix}0\\0\\1\end{pmatrix}\sin\psi
  \end{equation}
  and using a corresponding (moving) basis for
  vectors given by
  \begin{equation}
    \frac{\pm1}{\sqrt{\eta_1^2+\eta_2^2}} \begin{pmatrix}\eta_2\\-\eta_1\\0\end{pmatrix},\qquad \eta,\qquad  \frac{\pm\eta_3}{\sqrt{1-\eta_3^2}} \begin{pmatrix}\eta_1\\\eta_2\\-\frac{\eta_1^2+\eta_2^2}{\eta_3}\end{pmatrix}
  \end{equation}
  (sign chosen to make them smoothly dependent on $\eta\ne\pm(0,0,1)^\top$)
  decomposes $A(\eta)$ into (1,2)-block-diagonal structure 
  (independent of the co-ordinate $\phi$). The scalar block corresponds to
  the eigenvalue \eqref{eq:hexa-hyp-eig}, while the $2\times2$ block has
  trace $\mu+\tau_1\cos^2\psi + \tau_2\sin^2\psi$ and determinant
  $\mu\tau_1\cos^4\psi+\mu\tau_2\sin^4\psi+\frac{\tau_1\tau_2-2\lambda_2-\lambda_2^2}4 \sin^22\psi$.
\end{enumerate}
If $(\tau_1-\mu)(\tau_2-\mu)\ne0$, the $2\times2$ block has distinct 
eigenvalues for all $\psi$ and therefore the only degenerate directions are 
directions where this block has  \eqref{eq:hexa-hyp-eig} as one of its 
eigenvalues. This happens if and only if the right hand side of 
\eqref{eq:hexa-deg-circ} is non-negative and on the circle defined by that 
equation.

Thus, the previously developed theory is applicable for all directions except
the degenerate ones $\eta_1=\eta_2=0$ or \eqref{eq:hexa-deg-circ}. The always 
existent hyperbolic eigenvalue \eqref{eq:hexa-hyp-eig} leads to a decoupling 
of the thermo-elastic system into two scalar blocks and a (at least formally) 
2D thermo-elastic system. 

Due to rotational invariance, it suffices to treat the cut $\eta_1=0$
for handling of degenerate directions. This will be sketched later.

\section{Some special degenerate directions}\label{sec3}
We want to study neighbourhoods of degenerate directions for some 
particular cases. To study degenerate directions in full generality 
is beyond the scope of this paper. We relate our approach to the type 
of singularity of the corresponding {\em Fresnel surface} 
\begin{equation}
    \mathcal S=\{ \xi\in\R^n \, | \, 1 \in\spec A(\xi) \}.
\end{equation}
This surface is in general $n$-sheeted and for all non-degenerate directions these
sheets are given by
\begin{equation}
 \mathcal S_j= \{ \xi\in\mathbb R^n\;\text{non-deg.} \,|\, \omega_j(\xi)=1\} =
 \{ \omega_j^{-1}(\eta)\eta\,|\,\eta\in\S^{n-1}\;\text{non-degenerate}\, \}, 
\end{equation}
while in degenerate points the surface is self-intersecting. 
For the importance of these surfaces in elasticity theory and some 
interesting properties of them we refer to Duff \cite{Duf60} or the investigations 
from Musgrave \cite{Mus54}, \cite{Mus54a} and Miller-Musgrave \cite{Mus57}.

\begin{figure}
\includegraphics[width=7cm]{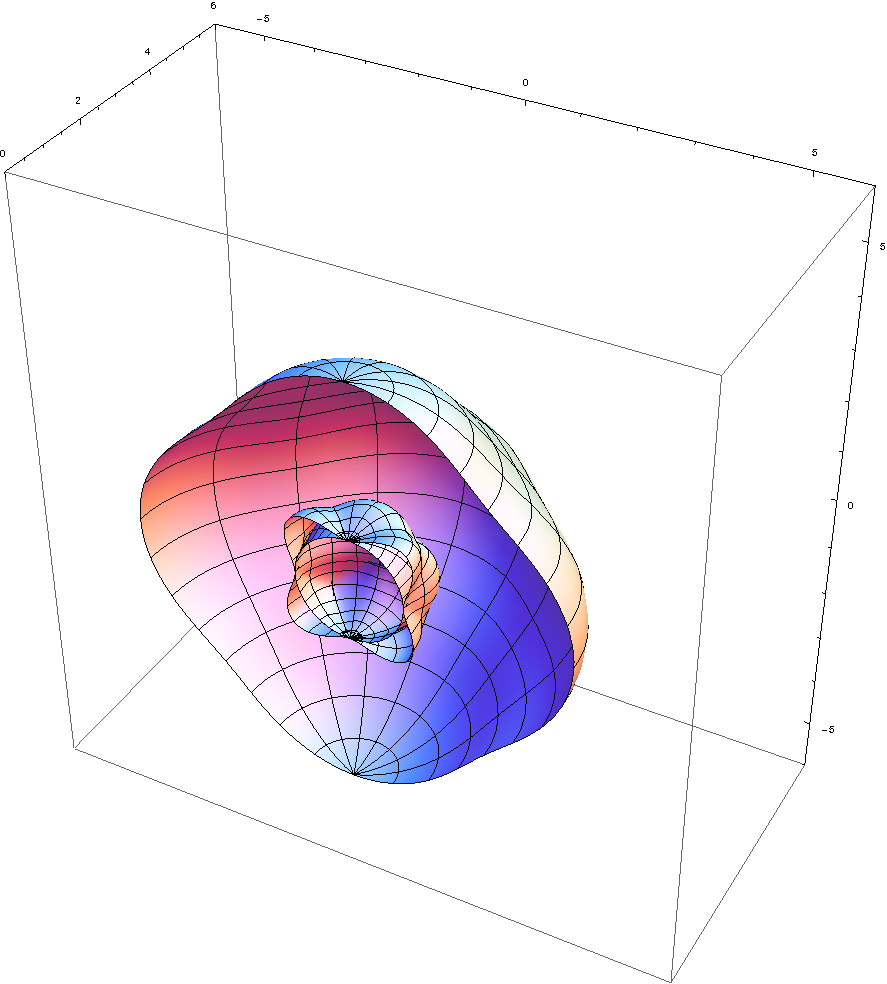}
\includegraphics[width=7cm]{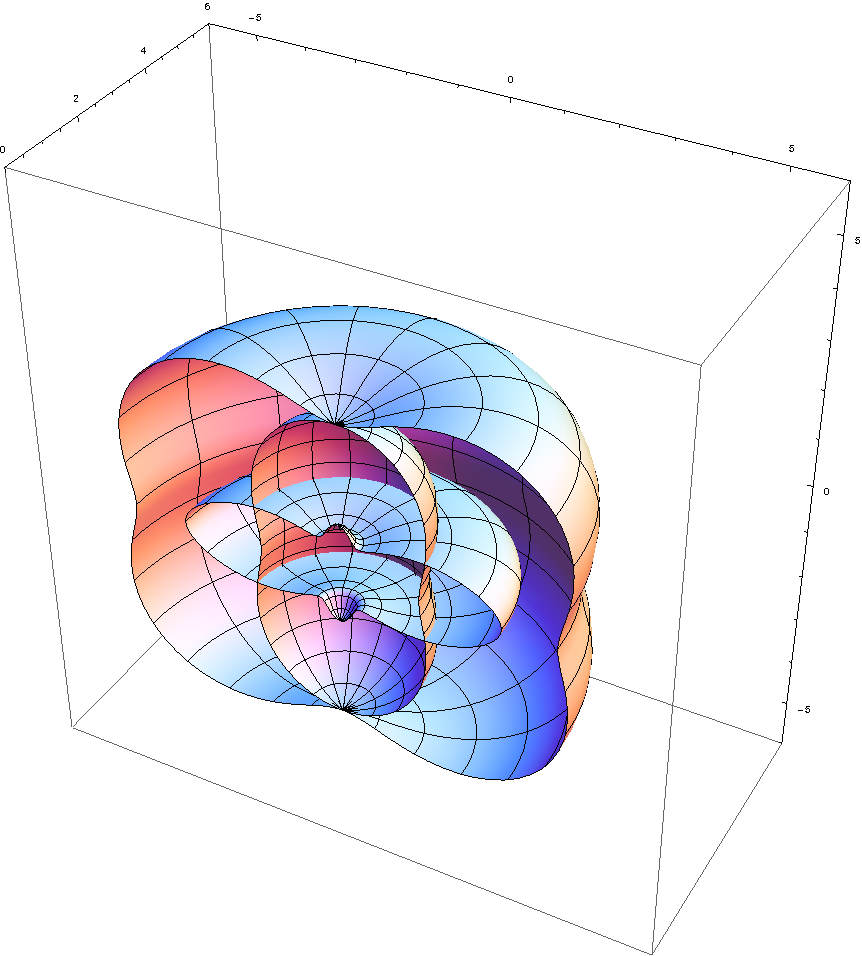}
\caption{A cut through the Fresnel surfaces for examples of a cubic and a
hexagonal medium. The material parameter are $\lambda=1$, $\tau=4$ and $\mu=1$ 
for the picture on the left (cubic) and $\lambda_1=1$, $\lambda_2=\frac15$, $\tau_1=4$, $\tau_2=1$ and $\mu=3$ for the picture on the right (hexagonal).}
\end{figure}

We remark only one of the general properties of $\mathcal S$ here. If $A(\xi)$ 
is polynomial in $\xi$ then the surface $\mathcal S$ is algebraic of 
degree $2n$ and therefore any straight line intersecting $\mathcal S$ has 
at most $2n$ intersection points with $\mathcal S$. In particular, if 
the inner sheet $\mathcal S_n$ does not touch any of the the outer sheets, it 
has to be strictly convex. 

\subsection{General strategy} If we investigate isolated degenerate directions or 
regular manifolds of degenerate directions of codimension greater than one we 
are faced with two major obstacles. Generically, eigenvectors of $A(\eta)$ can 
not be chosen continuously in a neighbourhood of the degenerate direction and 
therefore a reformulation as system of first order as in \eqref{eq:2.7} is problematic. 
This problem is related to higher-dimensional perturbation theory of matrices. 
It is well-known that in the one-dimensional situation eigenspaces are continuous 
(see, e.g., the book of Kato, \cite{Kato:1980}) and it can be resolved by 
introducing polar co-ordinates / normal co-ordinates around the degenerate directions 
and a system related to \eqref{eq:2.7} can be formulated on a corresponding blown-up 
space (see, e.g., \eqref{eq:3.10} below). Second obstacle are the multiplicities 
itself. Eigenvalues and eigenvectors of the constructed system of first order do 
not possess asymptotic expansions in powers of $|\xi|$ as $|\xi|$ tends to $0$ or 
$\infty$. However, especially in the three-dimensional setting we can write 
full asymptotic expansions in the distance to the degeneracy uniform in 
all remaining co-ordinates. 

We will discuss the application of this general strategy in detail for conic 
singularities of the Fresnel surface appearing for the case of cubic media and 
give the corresponding results for uniplanar singularities afterwards. Finally 
we will consider hexagonal media and show that they are much simpler in 
their analytical structure.

\begin{figure}
~\hfill
\includegraphics[width=5cm]{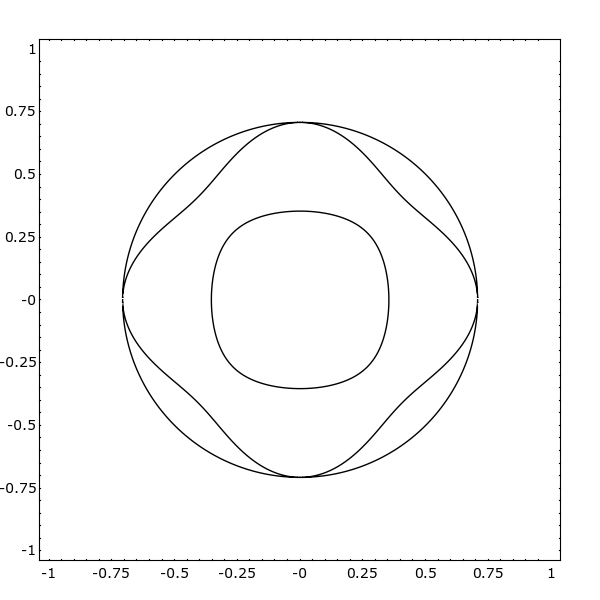}\hfill\includegraphics[width=7.07cm]{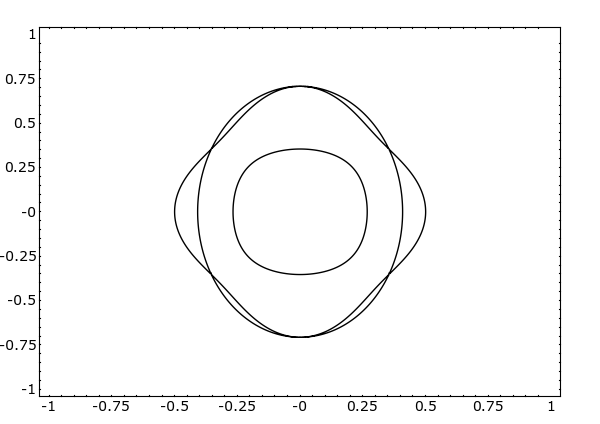}
\hfill~
\caption{Cuts of the Fresnel surface for cubic media; on the left hand side in the plane $\eta_3=0$, on the  right for $\eta_1=\eta_2$. The parameters are chosen as $\tau=8$, $\lambda=2$ and $\mu=2$.}
\end{figure}

\subsection{Cubic media, conic singularities} The Fresnel surface for cubic media 
has eight conic singularities which are related by the symmetries of the 
underlying medium. It suffices to consider one of them and we choose 
$\bar\eta=\frac1{\sqrt3}(1,1,1)^T\in\S^2$. 
Near this direction we introduce polar co-ordinates $(\epsilon,\phi)$ on 
the sphere $\S^2$ by
\begin{equation}\label{eq:3.3}
   \eta = \begin{pmatrix}\eta_1\\\eta_2\\\eta_3\end{pmatrix} = 
   \sqrt{1-\epsilon^2}\frac1{\sqrt3} \begin{pmatrix}1\\1\\1\end{pmatrix}
   + \epsilon \frac1{\sqrt6} \begin{pmatrix}-1\\-1\\2\end{pmatrix} \cos\phi + \epsilon \frac1{\sqrt2} \begin{pmatrix}1\\-1\\0\end{pmatrix} \sin\phi. 
\end{equation}
They allow to blow up the singularity by looking at $[0,\infty)\times\S^1$ 
instead of $\R^2$ as local model of $\S^2$.
In order to simplify notation, we apply a diagonaliser $\tilde M$ of 
$A(\bar\eta)$ to our matrices. For this we choose
the unitary matrix 
\begin{equation}
  \tilde M = \frac1{\sqrt6}\begin{pmatrix} \sqrt2 & -1 & \sqrt3\\\sqrt2&-1&-\sqrt3\\\sqrt2&2&0\end{pmatrix}  
\end{equation}
(corresponding to the vectors chosen already in \eqref{eq:3.3}).
The matrix $\tilde M^{-1}A(\eta) \tilde M$ has a full asymptotic 
expansion as $\epsilon\to0$ and can be written as
\begin{equation}
   \tilde  M^{-1}A(\epsilon,\phi)\tilde M = A_0 +  \epsilon A_1(\phi) 
    + \mathcal O(\epsilon^2),\qquad \epsilon\to0
\end{equation}
with matrices 
\begin{subequations}
\begin{align}
  A_0 & = \diag\left(\frac{\tau+2\lambda+4\mu}3, \frac{\tau+\mu-\lambda}3, \frac{\tau+\mu-\lambda}3\right),\\
  A_1(\phi)&=\frac{2\tau-\mu+\lambda}3 \begin{pmatrix} 
 &\cos\phi&\sin\phi\\\cos\phi&&\\\sin\phi    \end{pmatrix}\notag\\&\qquad\qquad
 + \frac{\sqrt2(-\tau+2\mu+\lambda)}3 \begin{pmatrix} 0 && \\&-\cos\phi&\sin\phi\\&\sin\phi&\cos\phi\end{pmatrix}. 
\end{align}
\end{subequations}
Now we can apply the block-diagonalisation procedure (again 
following \cite[Sec.~2.2]{JaWi}) to obtain the behaviour of 
eigenvalues and eigenprojections of $\tilde M^{-1}A(\epsilon,\phi)\tilde M$
as $\epsilon\to0$ for all $\phi$. We restrict consideration to the 
case where $\lambda+\mu\ne0$, such that 
$A_0$ has two different eigenvalues. 
\begin{prop}\label{prop:3.1}
The eigenvalues $\varkappa_j(\epsilon,\phi)$ and the corresponding 
eigenprojections of $A(\epsilon,\phi)$ have uniformly in $\phi$ full 
asymptotic expansions as $\epsilon\to0$. The main terms are given by
\begin{subequations}
\begin{align}
  \varkappa_1(\epsilon,\phi) &=  \frac{\tau+2\lambda+4\mu}3 +\mathcal O(\epsilon^2) ,\\
  \varkappa_2(\epsilon,\phi) &=  \frac{\tau+\mu-\lambda}3 +\frac{\sqrt2(-\tau+2\mu+\lambda)}3\epsilon  +\mathcal O(\epsilon^2) ,\\
  \varkappa_3(\epsilon,\phi) &=  \frac{\tau+\mu-\lambda}3 -\frac{\sqrt2(-\tau+2\mu+\lambda)}3\epsilon+\mathcal O(\epsilon^2) .
\end{align}
\end{subequations}
\end{prop}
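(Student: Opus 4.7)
The plan is to apply the block-diagonalisation scheme of Jachmann--Wirth \cite[Sec.~2.2]{JaWi} to the matrix $\tilde M^{-1} A(\epsilon,\phi) \tilde M = A_0 + \epsilon A_1(\phi) + \mathcal O(\epsilon^2)$ in the small parameter $\epsilon$, with $\phi$ treated as a smooth parameter ranging over the compact manifold $\S^1$. Since $A_0$ has the simple eigenvalue $\frac{\tau+2\lambda+4\mu}{3}$ and the double eigenvalue $\frac{\tau+\mu-\lambda}{3}$, which are separated under the running assumption $\lambda+\mu\ne 0$, the first step of the scheme splits the symbol into a $1\times 1$ block associated with $\varkappa_1$ and a $2\times 2$ block associated with $\varkappa_2,\varkappa_3$, uniformly in $\phi$.

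For the $1\times 1$ block the situation is standard non-degenerate perturbation theory and yields a full asymptotic expansion in $\epsilon$ whose $\mathcal O(\epsilon)$ coefficient equals the $(1,1)$ entry of $A_1(\phi)$. Inspection of the explicit formula for $A_1(\phi)$ shows that this entry is identically zero, giving $\varkappa_1(\epsilon,\phi) = \frac{\tau+2\lambda+4\mu}{3} + \mathcal O(\epsilon^2)$ with remainder uniform in $\phi$.

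For the $2\times 2$ block the essential observation is that the restriction of $A_1(\phi)$ to the two-dimensional eigenspace of the double eigenvalue of $A_0$ equals
\[
   \frac{\sqrt2(-\tau+2\mu+\lambda)}{3}\begin{pmatrix} -\cos\phi & \sin\phi\\ \sin\phi & \cos\phi \end{pmatrix},
\]
which has trace $0$ and determinant equal to $-1$ times the square of its prefactor. Its eigenvalues are therefore $\pm\tfrac{\sqrt2(-\tau+2\mu+\lambda)}{3}$ and, crucially, \emph{independent of $\phi$}. Under the natural non-degeneracy condition $-\tau+2\mu+\lambda\ne 0$ these eigenvalues are distinct; a second iteration of the block diagonalisation inside the $2\times 2$ block then produces the claimed first-order splitting of $\varkappa_2$ and $\varkappa_3$ together with full asymptotic expansions.

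The main technical point is the uniformity of all expansions in $\phi\in\S^1$. At each stage of the scheme one has to invert a Sylvester-type equation whose solvability hinges on the spectral gap of the already identified leading symbol. Here the first-step gap is $\phi$-independent and controlled by $\lambda+\mu$, while the second-step gap is $\phi$-independent and controlled by $-\tau+2\mu+\lambda$; subsequent corrections remain smooth in $\phi$ on the compact manifold $\S^1$, so compactness delivers uniform bounds at every order and no non-uniform behaviour in $\phi$ arises.
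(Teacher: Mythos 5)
Your proposal is correct and follows essentially the same route as the paper: a two-step Jachmann--Wirth block-diagonalisation, first splitting off the simple eigenvalue of $A_0$ (gap $\lambda+\mu$), then diagonalising the $2\times2$ block using the fact that the lower-right block of $A_1(\phi)$ has $\phi$-independent eigenvalues $\pm\tfrac{\sqrt2(-\tau+2\mu+\lambda)}{3}$. The paper additionally writes out the explicit first-order diagonalisers $N_1^{(1)}(\phi)$ and $\tilde M_2(\phi)$, but the key observations and non-degeneracy conditions you use match the paper's.
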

\begin{rem}
The exceptional case when $\tau=\lambda+2\mu$ corresponds to isotropic 
media and is therefore excluded. In all other cases the two sheets 
$\omega_2(\eta)=\sqrt{\varkappa_2(\eta)}$ and 
$\omega_3(\eta)=\sqrt{\varkappa_3(\eta)}$ form a double cone on the 
Fresnel surface $S$. Hence, the statement explains the notion of 
conical singularity.   Note, that the linear terms are independent of $\phi$ and
therefore the cone is approximately a spherical cone near the conic point. 
\end{rem}
\begin{proof}
We will only shortly review the main steps. First we 
(1,2)-block-diagonalise $\tilde M^{-1}A(\epsilon,\phi)\tilde M$ 
(modulo $\mathcal O(\epsilon^N)$ for any $N$ we like). The 
diagonaliser we are going to construct has the form 
$I+\epsilon N_1^{(1)}(\phi)+\cdots +\epsilon^{N-1}N_1^{(N-1)}(\phi)$ 
and as in \cite[Sec.~2.2]{JaWi} its terms are given by 
recursion formulae. For $N_1^{(1)}$ we divide the
off-(block-)diagonal terms of $A_1$ by the difference of the 
corresponding diagonal entries of $A_0$. 
This gives as first term
\begin{equation}
   N_1^{(1)}(\phi) = \frac{2\tau-\mu+\lambda}{3(\lambda+\mu)} \begin{pmatrix} &\cos\phi & \sin\phi\\-\cos\phi\\-\sin\phi \end{pmatrix}
\end{equation}
and allows to cancel the off-(block-)diagonal entries of $A_1$. 
We skip the further construction and move to the next step.  Since 
the lower $2\times2$ block of $\bdiag_{1,2}A_1$ has distinct eigenvalues
(namely $\pm 1$) we can now perform a diagonalisation scheme in 
the subspace corresponding to this block (modulo $\mathcal O(\epsilon^N)$). 
Again we restrict ourselves to the main terms. A unitary
diagonaliser of the $2\times2$-block can be chosen as the unitary matrix
\begin{equation} 
 \tilde M_2(\phi) = 
 \begin{pmatrix}1  \\&
 \sin\frac\phi2 &\cos\frac\phi2 \\ &\cos\frac\phi2 &- \sin\frac\phi2
\end{pmatrix}.
\end{equation}
After transforming with that matrix we apply the recursive scheme 
to diagonalise further. Note that  after 
applying $\tilde M_2(\phi)$ the matrix is diagonal modulo 
$\mathcal O(\epsilon^2)$ and therefore, 
$\tilde M(I+\epsilon N_1^{(1)}(\phi))\tilde M_2(\phi) = M_0(\phi)+\epsilon M_1(\phi) + \mathcal O(\epsilon^2)$ determines the main terms of a diagonaliser of the matrix $A(\epsilon,\phi)$ and 
we can deduce the statements about the eigenvalue asymptotics. 
\end{proof}


\subsubsection{System formulation} Let $M(\epsilon,\phi)$ be the diagonaliser 
of $A(\epsilon,\phi)$ constructed in Proposition~\ref{prop:3.1}. Then we consider  
\begin{equation}\label{eq:3.10}
  V(t,\epsilon,\phi,|\xi|) = \begin{pmatrix}
  (\D_t+|\xi|\mathcal D^{1/2}(\epsilon,\phi))M^{-1}(\epsilon,\phi) \hat U(t,\xi)\\
  (\D_t-|\xi|\mathcal D^{1/2}(\epsilon,\phi))M^{-1}(\epsilon,\phi) \hat U(t,\xi)\\
 \hat \theta 
  \end{pmatrix} \in \C^{7},
\end{equation}
with $\xi=|\xi| \eta(\epsilon,\phi)$ and 
$\mathcal D^{1/2}(\epsilon,\phi)=\diag(\omega_1(\epsilon,\phi),...)$ the 
diagonal matrix containing the square roots 
$\omega_j(\epsilon,\phi)=\sqrt{\varkappa_j(\epsilon,\phi)}$ of the 
eigenvalues of $A(\epsilon,\phi)$. The vector $V$ satisfies the first order system
$\D_tV=B(\epsilon,\phi,|\xi|) V$ with $B(\epsilon,\phi,|\xi|) =B_1(\epsilon,\phi)|\xi| +B_2|\xi|^2 $ given by
\begin{equation}\label{eq:3.11}
B_1(\epsilon,\phi) = \begin{pmatrix} 
\omega_1(\epsilon,\phi) &&&&\i\gamma a_1(\epsilon,\phi)\\
&\omega_2(\epsilon,\phi) &&&\i\gamma a_2(\epsilon,\phi)\\
&&\ddots & &\vdots \\
&&&-\omega_3(\epsilon,\phi) & \i\gamma a_3(\epsilon,\phi) \\
-\frac\i 2\gamma a_1(\epsilon,\phi) & -\frac\i2\gamma a_2(\epsilon,\phi) & 
\cdots& -\frac\i2 \gamma a_3(\epsilon,\phi) & 0\end{pmatrix} 
\end{equation} 
and $B_2=\diag(0,\ldots,0,\i\kappa)$. The coupling functions 
$a_j(\epsilon,\phi)$ are the components of the vector $M^{-1}(\epsilon,\phi)\eta$. 
From Proposition~\ref{prop:3.1} we know that they have asymptotic expansions as $\epsilon\to0$.

\begin{rem}
{\sl 1.} Since $M^{-1}(\epsilon,\phi)=\tilde M_2^*(\phi)(I-\epsilon N_1^{(1)}(\phi))\tilde M^*+\mathcal O(\epsilon^2)$ by our construction it follows that 
\begin{subequations}
\begin{align}
a_1(\epsilon,\phi)&=1+\mathcal O(\epsilon^2),\\
a_2(\epsilon,\phi)&=\epsilon\frac{2(\mu+2\lambda-\tau)}{3(\lambda+\mu)}\sin\frac{3\phi}2 +\mathcal O(\epsilon^2),\\
a_3(\epsilon,\phi)&=\epsilon\frac{2(\mu+2\lambda-\tau)}{3(\lambda+\mu)}\cos\frac{3\phi}2 +\mathcal O(\epsilon^2).
\end{align}
\end{subequations}
We know that the coupling functions vanish along three great circles 
through $\bar\eta$. We see that the numbering of the eigenprojections 
is not consistent along the circles. The coupling functions $a_2$ and 
$a_3$ vanish both in the degenerate direction.\\
{\sl 2.} Since we do not assume that $M(\epsilon,\phi)$ is unitary the 
relation $\sum_j a_j^2=1$ does not hold  for these coupling functions. 
However, $M_0(\phi)$ is unitary and therefore 
$\sum_j a_j^2=1+\mathcal O(\epsilon)$ as already observed.
\end{rem}

\subsubsection{Real and imaginary parts of eigenvalues} The coefficient matrix 
$B(\epsilon,\phi,|\xi|)$ has the same structure as $B(\xi)$ in 
Section~\ref{sec2}. Therefore, we can conclude similar statements on 
eigenvalues and their behaviour by (a) investigating the characteristic 
polynomial and (b) block-diagonalising for small and large $|\xi|$, respectively. 

\begin{prop}
\begin{enumerate}
\item  $\trace B(\epsilon,\phi,|\xi|) = \i\kappa|\xi|^2$ and 
$\det B(\epsilon,\phi,|\xi|) = \i\kappa|\xi|^2\det A(\xi)$.
\item $B(\epsilon,\phi,|\xi|)$ has purely real eigenvalues for $|\xi|\ne0$ if and only if 
$a_2(\epsilon,\phi)a_3(\epsilon,\phi)=0$, i.e., $\epsilon=0$ or $\phi\in\frac\pi3\mathbb Z$. 
\item  $B(0,\phi,|\xi|)$ has the real eigenvalues 
$\pm\omega_{2,3}(0,\phi)=\frac{\sqrt3}3(\tau+\mu-\lambda)$ and three 
eigenvalues satisfying $\Im\nu \ge C$ if $|\xi|\ge c$ and $\Im\nu\sim|\xi|^2$ if $|\xi|<c$.
\item The quotient
\begin{equation}\label{eq:deg-dir-quotient}
\frac{a_2^2(\epsilon,\phi)(\nu_{2,3}^2(\epsilon,\phi,|\xi|)-\varkappa_3(\epsilon,\phi)|\xi|^2) + a_3^2(\epsilon,\phi)(\nu_{2,3}^2(\epsilon,\phi,|\xi|)-\varkappa_2(\epsilon,\phi)|\xi|^2)}{(\nu_{2,3}^2(\epsilon,\phi,|\xi|)-\varkappa_{2}(\epsilon,\phi)|\xi|^2)(\nu_{2,3}^2(\epsilon,\phi)-\varkappa_3(\epsilon,\phi)|\xi|^2)}
\end{equation}
involving the hyperbolic eigenvalues $\nu_{2,3}^\pm$ of $B(\epsilon,\phi,|\xi|)$ 
is smooth and non-vanishing for fixed values of $|\xi|$.
\end{enumerate}
\end{prop}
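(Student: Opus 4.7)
The matrix $B(\epsilon,\phi,|\xi|)$ has exactly the block structure of $B(\xi)$ from Section~\ref{sec2}, so the plan is to adapt Proposition~\ref{prop22} for parts~(1)--(2), reduce part~(3) to the one-dimensional analysis of Section~\ref{ssec:1d}, and derive (4) from the analogue of the Kelvin characteristic equation~\eqref{eq:KelvinCharEq}.

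For (1) the trace collapses pairwise, leaving $\i\kappa|\xi|^2$. For the determinant, a Schur complement with respect to the $6\times 6$ diagonal block $D=\diag(\pm\omega_j)|\xi|$ produces the characteristic polynomial as
\begin{equation*}
\det(\nu I-B)=\prod_{j=1}^3\!\bigl(\nu^2-\varkappa_j(\epsilon,\phi)|\xi|^2\bigr)\Bigl[\nu-\i\kappa|\xi|^2-\gamma^2|\xi|^2\nu\sum_{j=1}^3\frac{a_j^2(\epsilon,\phi)}{\nu^2-\varkappa_j|\xi|^2}\Bigr],
\end{equation*}
and evaluation at $\nu=0$ identifies $\det B$ with $\i\kappa|\xi|^2\det A(\xi)$ up to the signs fixed by Proposition~\ref{prop22}. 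For (2), separating real and imaginary parts of this characteristic polynomial at a real root forces $\nu^2=\varkappa_k|\xi|^2$ together with $a_k=0$; since $a_1=1+\mathcal O(\epsilon^2)$ is non-zero near $\bar\eta$, the real-eigenvalue locus reduces to $\{a_2a_3=0\}$. The expansions $a_2\propto\epsilon\sin(3\phi/2)$, $a_3\propto\epsilon\cos(3\phi/2)$ from the preceding remark, together with the global description of hyperbolic directions for cubic media as the zero set of $\eta_1\eta_2\eta_3\prod_{i<j}(\eta_i^2-\eta_j^2)$ (which meets a neighbourhood of $\bar\eta$ in exactly three great circles), then identify this locus with $\{\epsilon=0\}\cup\{\phi\in\tfrac{\pi}{3}\mathbb{Z}\}$.

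For (3), at $\epsilon=0$ one has $a_2(0,\phi)=a_3(0,\phi)=0$ and $a_1(0,\phi)=1$, so the rows and columns of $B(0,\phi,|\xi|)$ indexed by $\pm\omega_{2,3}$ decouple completely from the rest: the matrix splits into a $4\times 4$ diagonal block carrying the double real eigenvalues $\pm\omega_{2,3}(0,\phi)|\xi|$ and a $3\times 3$ block that is literally the one-dimensional thermo-elastic symbol~\eqref{eq:B-1d} with elastic speed $\omega_1(0,\phi)$. The claimed bounds on the remaining three eigenvalues then follow verbatim from~\eqref{eq:2.43}--\eqref{eq:2.44}.

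For (4), dividing the characteristic polynomial above by $\nu\prod_j(\nu^2-\varkappa_j|\xi|^2)$ at $\nu=\nu_{2,3}^\pm$ yields the Kelvin-type identity
\begin{equation*}
\frac{a_2^2}{\nu_{2,3}^2-\varkappa_2|\xi|^2}+\frac{a_3^2}{\nu_{2,3}^2-\varkappa_3|\xi|^2}
=\frac{1}{\gamma^2|\xi|^2}\Bigl(1-\frac{\i\kappa|\xi|^2}{\nu_{2,3}^\pm}\Bigr)-\frac{a_1^2}{(\nu_{2,3}^\pm)^2-\varkappa_1|\xi|^2},
\end{equation*}
whose left-hand side is precisely the quotient in the statement after combining fractions. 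Non-vanishing then follows from the imaginary part: $\nu_{2,3}^\pm$ lies near the real nonzero value $\pm\omega_{2,3}(0,\phi)|\xi|$, so $\Im(\i\kappa|\xi|^2/\nu_{2,3}^\pm)\approx\mp\kappa|\xi|/\omega_{2,3}(0,\phi)\ne 0$ while the $a_1$-term is nearly real, forcing the whole right-hand side to have non-zero imaginary part for $\kappa>0$, $|\xi|\ne 0$. The principal obstacle is smoothness in $(\epsilon,\phi)$ across the conic degeneracy $\epsilon=0$, where $\nu_2^\pm$ and $\nu_3^\pm$ coalesce and typically acquire square-root type singularities individually; this is handled because the right-hand side depends smoothly on $(\nu_{2,3}^\pm)^2$, the denominator $(\nu_{2,3}^\pm)^2-\varkappa_1|\xi|^2$ is uniformly bounded away from zero by Proposition~\ref{prop:3.1}, and the quotient inherits smoothness from the symmetric functions of the two coalescing squared eigenvalues, i.e.~from the smooth coefficients of the reduced factor of the characteristic polynomial.
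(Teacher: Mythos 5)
Your argument follows the same route as the paper's proof: part (2) by separating real and imaginary parts of the characteristic polynomial, part (3) by observing the decoupling into a one-dimensional thermo-elastic block plus diagonal entries at $\epsilon=0$, and part (4) by rearranging the Kelvin-type characteristic identity and reading off the imaginary part at hyperbolic/degenerate directions. You fill in part (1) (which the paper omits, saying it considers only (2)--(4)) and you make the Kelvin rearrangement and the smoothness discussion in (4) more explicit than the paper, which compresses both into a single sentence; the essential content matches.
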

\begin{proof}
We consider only part (2) to (4). The characteristic polynomial of $B$ is 
given by an expression like \eqref{eq:CharPolB}. If we assume that eigenvalues 
are purely real we can split the expression into real and imaginary part. We 
consider the imaginary part first, which leads to
\begin{equation}
  \kappa|\xi|^2\prod_{j=1}^3 (\nu^2 -\varkappa_j(\epsilon,\phi)|\xi|^2) = 0.
\end{equation}
Therefore, real eigenvalues have to coincide with the square roots of 
eigenvalues of $A(\xi)$. If we assume $\nu^2=\varkappa_j(\epsilon,\phi)|\xi|^2)$ 
is a root of the characteristic equation, we can divide by the corresponding 
factor and obtain if $\epsilon\ne 0$ (and therefore $A$ is non-degenerate)
\begin{equation}
  a_j^2(\epsilon,\phi) = 0.
\end{equation}
If $\epsilon=0$ the characteristic polynomial factors as
\begin{equation}
  ((\nu-\i\kappa|\xi|^2) (\nu^2-\bar\varkappa_1|\xi|^2)-\nu\gamma^2|\xi|^2 ) (\nu-\bar\varkappa_{2,3}|\xi|^2)^2=0
\end{equation}
with $\bar\varkappa_1=\frac13(\tau+2\lambda+4\mu)$ and 
$\bar\varkappa_{2,3} = \frac13 (\tau+\mu-\lambda)$. The first factor 
resembles one-dimensional thermo-elasticity (with $\tau^2=\bar\varkappa_1$) 
and gives three roots with positive imaginary parts subject to 
\eqref{eq:2.43} and \eqref{eq:2.44}. Finally (4) follows by collecting
the two related terms in the characteristic equation of form \eqref{eq:KelvinCharEq}.
The imaginary part of the quotient is given by $-\kappa|\xi|^2/\nu_{2,3}^\pm$ 
in hyperbolic/degenerate directions and therefore non-zero.
\end{proof}

The quotient \eqref{eq:deg-dir-quotient} may be used to determine asymptotic 
expansions of the hyperbolic eigenvalue and its imaginary part as $\epsilon\to0$ 
for fixed $|\xi|$ and $\phi\not\in\frac\pi3\mathbb Z$. We will 
follow a different strategy and diagonalise as $\epsilon\to0$ uniform on bounded $\xi$.

\subsubsection{Asymptotic expansion as $\epsilon\to 0$ uniform in $|\xi|$}
Note first, that $B(|\xi|,0,\phi)$ is independent of $\phi$ and just 
the system of one-dimensional thermo-elasticity \eqref{eq:B-1d} extended 
by four additional diagonal entries. Since we need to understand this 
system first, we are going to recall some facts about the one-dimensional 
theory. As $|\xi|$ becomes small/large we already gave asymptotic 
expansions of eigenvalues in Section~\ref{ssec:1d}.  The bit of information 
which is still missing is contained in the following lemma.

\begin{lem}
The coefficient matrix $B(\xi)$ of the one-dimensional thermo-elastic 
system given in \eqref{eq:B-1d} has for $\xi\ne0$ and under the 
natural assumptions $\gamma,\kappa,\tau>0$ only simple eigenvalues. 
\end{lem}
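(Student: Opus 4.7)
The plan is to show that the characteristic polynomial of $B(\xi)$ admits no multiple root when $\xi\ne 0$. Specialising Proposition~\ref{prop22} to $n=1$ (where $a_1 \equiv 1$) yields
$$
p(\nu) = (\nu - \i\kappa\xi^2)(\nu^2 - \tau^2\xi^2) - \gamma^2\xi^2\nu,
$$
and the same proposition rules out real roots, since neither direction $\eta=\pm 1$ is hyperbolic. Every eigenvalue of $B(\xi)$ therefore has strictly positive imaginary part.

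I would split the argument by the size of $|\xi|$. For $|\xi|$ sufficiently small or sufficiently large the asymptotic expansions \eqref{eq:2.43} separate the three roots of $p$ to leading order, two of them by their opposite leading real parts and the third by its dominant purely imaginary term of order $|\xi|^2$, so simplicity is immediate in those regimes. It therefore suffices to exclude coincidences on a compact interval $0<c\le|\xi|\le C$. A double root $\nu_0$ of $p$ would simultaneously satisfy $p(\nu_0) = 0$ and $p'(\nu_0) = 3\nu_0^2 - 2\i\kappa\xi^2\nu_0 - (\tau^2+\gamma^2)\xi^2 = 0$. Using the (quadratic) equation $p'(\nu_0)=0$ to eliminate $\nu_0^2$ inside $p(\nu_0)=0$ reduces the system to a linear equation in $\nu_0$, which determines $\nu_0$ as a rational function of $\xi,\tau,\kappa,\gamma$; substituting this expression back into $p'(\nu_0)=0$ produces a polynomial identity in $\xi^2$ alone, equivalent to the vanishing of the discriminant of $p$.

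The main obstacle is this last, purely algebraic, step. After simplification the discriminant becomes a polynomial of the form $4\tau^2\sigma^2 + \bigl(8\tau^4 - 20\tau^2\gamma^2 - \gamma^4\bigr)\sigma + 4(\tau^2+\gamma^2)^3$ in $\sigma=\kappa^2\xi^2$, with manifestly positive leading and constant coefficients; non-vanishing on $\sigma>0$ is precisely what the natural positivity assumptions $\gamma,\kappa,\tau>0$ must deliver, and verifying this is the one genuinely arithmetic part of the proof.
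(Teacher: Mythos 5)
Your route is genuinely different from the paper's: the paper uses the conjugation symmetry $\nu\mapsto\overline{-\nu}$ of the characteristic polynomial to force any multiple root to be purely imaginary, factors the cubic accordingly, and compares coefficients; you instead compute the (real) discriminant of the cubic and try to show it has no zero in $\sigma=\kappa^2\xi^2>0$. Your characteristic polynomial $p(\nu)=(\nu-\i\kappa\xi^2)(\nu^2-\tau^2\xi^2)-\gamma^2\xi^2\nu$ and the resulting discriminant $q(\sigma)=4\tau^2\sigma^2+(8\tau^4-20\tau^2\gamma^2-\gamma^4)\sigma+4(\tau^2+\gamma^2)^3$ are both correct — indeed more careful than the paper's displayed cubic, whose $\nu$-coefficient should be $-(\tau^2+\gamma^2)|\xi|^2$ rather than $+\tau^2|\xi|^2$. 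Up to this point, both strategies are sound.

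The gap is the step you flag as ``genuinely arithmetic'' and then defer: the claim that $q(\sigma)\neq0$ for all $\sigma>0$ is false. Writing $t=\tau^2$, $g=\gamma^2$, a short calculation gives
\begin{equation*}
(8t^2-20tg-g^2)^2-64t(t+g)^3=-g\,(8t-g)^3,
\end{equation*}
so the discriminant of the quadratic $q$ is nonnegative exactly when $\gamma^2\ge 8\tau^2$; in that regime the middle coefficient $8t^2-20tg-g^2$ is negative, while the product of the roots of $q$ is $4(t+g)^3/(4t)>0$, so both roots of $q$ lie in $\sigma>0$. Concretely, with $\tau=\kappa=1$, $\gamma^2=8$, $\xi^2=27$ one has $p(\nu)=\nu^3-27\i\nu^2-243\nu+729\i=(\nu-9\i)^3$, a triple eigenvalue; for $\gamma^2=9$, $\xi^2=32$ one gets the double eigenvalue $8\i$ (with simple eigenvalue $16\i$). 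So ``only $\gamma,\kappa,\tau>0$'' does not deliver the conclusion, and an extra smallness hypothesis of the form $\gamma^2<8\tau^2$ is needed. Note that the paper's own coefficient comparison only produces a contradiction because of the misprinted cubic: with the correct polynomial the equations $\kappa\xi^2=a+2c$, $(\tau^2+\gamma^2)\xi^2=c^2+2ac$, $\kappa\tau^2\xi^4=ac^2$ are solvable with $a,c>0$ at the threshold above. Your approach, if you carry out the arithmetic you left open, actually reveals this issue rather than proving the Lemma as stated.
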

\begin{proof}
Note that the characteristic polynomial of this matrix $B(\xi)$ is given by 
$$
\nu^3-\i\kappa|\xi|^2\nu^2+\tau^2|\xi|^2\nu +\i\tau^2\kappa|\xi|^4,
$$
which is invariant under the transform $\nu\mapsto \overline{-\nu}$ 
and has alternating imaginary and real coefficients. From that we 
conclude that the only possible solutions are of the form $\i a$, $b+\i c$ 
and $-b+\i c$ for certain real $a,b,c$. Furthermore, from the general 
theory of Section~\ref{sec2} it is clear that $a,c>0$. Thus, the only 
possibility for multiplicities to occur is if $b=0$. Plugging in $b=0$ 
and multiplying the corresponding linear factors gives
$$ \nu^3-\nu^2 (\i a +2\i c) -\nu(c^2+2ac) +\i c^2 a. $$
Comparing coefficients with the above polynomial implies that 
$\kappa|\xi|^2 = - ca / (c+2a)$, which contradicts to the positivity 
of all quantities involved.
\end{proof}

We write the coefficient matrix $B(|\xi|,\epsilon,\phi)$ as sum 
of homogeneous components in $\epsilon$
\begin{align}\label{eq:3.17}
  &|\xi|^{-1} B(|\xi|,\epsilon,\phi)  = B^{(0)}(|\xi|,\phi) + \epsilon B^{(1)}(|\xi|,\phi) + \mathcal O(\epsilon^2),\\ 
\intertext{where}
  B^{(0)}(|\xi|,\phi) &=  \begin{pmatrix}
    \bar\omega_1&&&&&& \i\gamma \\
     &\bar\omega_2 &&&&&\\
     &&\bar\omega_2&&&\\
     &&&- \bar\omega_1&&& \i\gamma\\
     &&&&-\bar\omega_2\\
     &&&&&-\bar\omega_2\\
     -\frac\i2\gamma&&&-\frac\i2\gamma&&&\i\kappa|\xi|
   \end{pmatrix},\\
  \small B^{(1)}(|\xi|,\phi) &= \small \begin{pmatrix}
   0&&&&&&0\\&\delta_1&&&&& \i\gamma\delta_2\sin\frac{3\phi}2\\
   &&-\delta_1&&&&\i\gamma\delta_2\cos\frac{3\phi}2\\
   &&&0&&&0\\&&&&\delta_1&&\i\gamma\delta_2\sin\frac{3\phi}2\\
   &&&&&-\delta_1&\i\gamma\delta_2\cos\frac{3\phi}2\\
   0&-\frac{\i\gamma\delta_2}2\sin\frac{3\phi}2&-\frac{\i\gamma\delta_2}\cos\frac{3\phi}2&0&-\frac{\i\gamma\delta_2}2\sin\frac{3\phi}2&-\frac{\i\gamma\delta_2}2\cos\frac{3\phi}2&0
   \end{pmatrix},
\end{align}
and $\bar\omega_1= \sqrt{\frac{\tau+2\lambda+4\mu}{3}} $, $\bar\omega_2=\sqrt{\frac{\tau+\mu-\lambda}3}$,
$\delta_1=\frac1{\sqrt6} \frac{-\tau+2\mu+\lambda}{\sqrt{\tau+\mu-\lambda}}$ and $\delta_2=\frac{2(\mu+2\lambda-\tau)}{3(\lambda+\mu)}$. 

As a direct consequence of the previous lemma in combination with the 
asymptotic expansions of Section~\ref{ssec:1d} we obtain

\begin{prop}\label{prop:3.4}
Assume, that $\lambda+\mu\ne0$ and $\gamma^2+\lambda+\mu\ne0$. Then 
the matrix $B^{(0)}(|\xi|,\phi)$ has uniformly separated eigenvalues 
in $|\xi|\in\R$, $\phi\in\S^1$ (where $\pm\bar\omega_2$
are of constant multiplicity two). 
\end{prop}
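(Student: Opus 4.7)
The main structural observation is that $B^{(0)}(|\xi|,\phi)$ does not depend on $\phi$: reading off \eqref{eq:3.17}, the indices $2,3,5,6$ carry only the diagonal entries $\pm\bar\omega_2$, so after a permutation the matrix becomes block diagonal with those four scalar blocks together with one $3\times 3$ block
\begin{equation*}
  \tilde B(|\xi|) =
  \begin{pmatrix} \bar\omega_1 & 0 & \i\gamma \\
                 0 & -\bar\omega_1 & \i\gamma \\
                 -\tfrac{\i\gamma}{2} & -\tfrac{\i\gamma}{2} & \i\kappa|\xi|
  \end{pmatrix}
\end{equation*}
supported on indices $\{1,4,7\}$. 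This block is $|\xi|^{-1}$ times the one-dimensional thermo-elastic symbol \eqref{eq:B-1d} with wave speed $\tau=\bar\omega_1$, so its three eigenvalues are the rescaled 1D eigenvalues $\nu_0(|\xi|)/|\xi|$ and $\nu_1^\pm(|\xi|)/|\xi|$.

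For $|\xi|>0$ the preceding lemma together with the one-dimensional theory of Section~\ref{ssec:1d} gives that $\nu_0,\nu_1^\pm$ are pairwise distinct with strictly positive imaginary parts, so after division by $|\xi|$ they remain three pairwise distinct non-real numbers and automatically separate from the four real values $\pm\bar\omega_2$. At $|\xi|=0$ direct evaluation shows that the characteristic polynomial of $\tilde B(0)$ factors as $\nu(\nu^2-\bar\omega_1^2-\gamma^2)$, giving the real roots $0,\pm\sqrt{\bar\omega_1^2+\gamma^2}$. Positivity of $A(\bar\eta)$ forces $\bar\omega_2^2=(\tau+\mu-\lambda)/3>0$, so $0\ne\pm\bar\omega_2$; the identity $\bar\omega_1^2-\bar\omega_2^2=\lambda+\mu$ turns $\bar\omega_1^2+\gamma^2\ne\bar\omega_2^2$ precisely into the hypothesis $\gamma^2+\lambda+\mu\ne 0$; and $\lambda+\mu\ne 0$ secures $\bar\omega_1\ne\bar\omega_2$.

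To upgrade pointwise separation to the uniform version, the asymptotics \eqref{eq:2.43} (applied with $\tau=\bar\omega_1$) yield, after division by $|\xi|$, that $\nu_0(|\xi|)/|\xi|=\i\kappa|\xi|+\mathcal O(1)$ escapes to $\i\infty$ while $\nu_1^\pm(|\xi|)/|\xi|=\pm\bar\omega_1+\mathcal O(|\xi|^{-1})$ approaches $\pm\bar\omega_1\ne\pm\bar\omega_2$. Hence on $|\xi|\ge R$ with $R$ large all pairwise differences of eigenvalues are bounded below by a positive constant, and on the compact interval $|\xi|\in[0,R]$ continuity of the eigenvalue branches combined with the pointwise separation just obtained yields the uniform bound via a standard compactness argument.

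The only genuinely delicate point in the plan is the collision analysis at $|\xi|=0$, where $\tilde B(0)$ has three real eigenvalues that could in principle coincide with $\pm\bar\omega_2$; this is exactly what the two hypotheses $\lambda+\mu\ne 0$ and $\gamma^2+\lambda+\mu\ne 0$ are designed to rule out, while separation everywhere else is automatic from the imaginary parts being strictly positive.
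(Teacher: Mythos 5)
Your proof is correct and takes essentially the same route as the paper: both reduce to the observation that $B^{(0)}$ is $\phi$-independent and block-decomposes into the four scalar entries $\pm\bar\omega_2$ plus a $3\times 3$ block equal to $|\xi|^{-1}$ times the one-dimensional thermo-elastic symbol with speed $\bar\omega_1$, then invoke the preceding lemma (simplicity for $|\xi|\ne 0$) together with the asymptotics of Section~\ref{ssec:1d} to control the endpoints $|\xi|\to 0$ and $|\xi|\to\infty$. Your explicit factorisation $\nu(\nu^2-\bar\omega_1^2-\gamma^2)$ at $|\xi|=0$ and the identity $\bar\omega_1^2-\bar\omega_2^2=\lambda+\mu$ make the role of the two hypotheses precise, which is a nice sharpening of what the paper leaves implicit.
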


Now we can apply several steps of diagonalisation
based on the scheme of \cite[Sec. 2]{JaWi}. At first we apply 
the diagonaliser of the main part. This has only effects on the two 
entries related to $\bar\omega_1$  and the last row/column and 
determines the eigenvalues $\nu_0(|\xi|,\epsilon,\phi)$ and 
$\nu_1^\pm(|\xi|,\epsilon,\phi)$  modulo $\epsilon^2$.  Furthermore, 
Proposition~\ref{prop:3.4} allows to $(1,2,1,2,1)$-block-diagonalise 
modulo $\mathcal O(\epsilon^N)$, $N$ arbitrary. 

Finally we can investigate the remaining $2\times2$-blocks and 
diagonalise again because the $\epsilon$-homogeneous entries 
$\pm\delta_1\epsilon$ are distinct (trivially uniform in $|\xi|$ and $\phi$).

\begin{prop}\label{prop:3.5} Assume, that $\lambda+\mu\ne0$ and 
$\gamma^2+\lambda+\mu\ne0$. The eigenvalues of $B(|\xi|,\epsilon,\phi)$ 
have uniformly   in $|\xi|$ and $\phi$ full asymptotic expansions as 
$\epsilon\to0$. The first main terms are given as
\begin{subequations}
\begin{align}
  \nu_0(|\xi|,\epsilon,\phi)&=  \check \nu_0(|\xi|) +  |\xi|\mathcal O(\epsilon^2),\label{eq:3.20a}\\
  \nu_1^\pm(|\xi|,\epsilon,\phi)&=  \check \nu_1^\pm (|\xi|) + |\xi|\mathcal O(\epsilon^2),\\ 
  \nu_{2/3}^{\pm_1,\pm_2}(|\xi|,\epsilon,\phi) &=\pm_1 \bar\omega_2|\xi| \pm_2 \delta_1|\xi|\epsilon +|\xi|\mathcal O(\epsilon^2)
\end{align}
where $\check \nu_0(|\xi|)$ and $\check \nu_1^\pm(|\xi|)$ are 
the eigenvalues of the one-dimensional thermo-elastic system 
with propagation speed $\bar\omega_1$ and the signs $\pm_1$ and 
$\pm_2$ are independent of each other.
\end{subequations}
\end{prop}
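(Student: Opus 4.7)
The plan is to apply the recursive block-diagonalisation scheme of \cite{JaWi} to the expansion \eqref{eq:3.17} in two stages, exploiting the uniform spectral separation granted by Proposition~\ref{prop:3.4}.

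\emph{Step 1 (primary block-diagonalisation).} By Proposition~\ref{prop:3.4} the spectrum of $B^{(0)}(|\xi|,\phi)$ splits into five uniformly separated groups $\{\check\nu_0(|\xi|)\}$, $\{\check\nu_1^+(|\xi|)\}$, $\{\check\nu_1^-(|\xi|)\}$, $\{\bar\omega_2|\xi|\}$, $\{-\bar\omega_2|\xi|\}$, the last two being of constant multiplicity two. The standard recursion therefore produces a formal conjugator $I+\epsilon N_1(|\xi|,\phi)+\epsilon^2 N_2(|\xi|,\phi)+\cdots$ with coefficients bounded uniformly in $|\xi|$ and $\phi$, which $(1,1,1,2,2)$-block-diagonalises $|\xi|^{-1}B$ modulo $\mathcal O(\epsilon^N)$ for any prescribed $N$.

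\emph{Step 2 (scalar blocks).} The rows and columns $1$, $4$ and $7$ of $B^{(1)}$ are identically zero. Since the three simple eigenspaces of $B^{(0)}$ associated to $\check\nu_0$ and $\check\nu_1^\pm$ are supported entirely in the span of coordinates $1$, $4$ and $7$ (the embedded one-dimensional thermo-elastic subsystem is decoupled from the remaining coordinates at $\epsilon=0$), the projections of $B^{(1)}$ onto these eigenspaces vanish. Hence the scalar entries surviving Step~1 are exactly $\check\nu_0(|\xi|)$ and $\check\nu_1^\pm(|\xi|)$ with no $\mathcal O(\epsilon)$ correction, which yields \eqref{eq:3.20a} and its sibling.

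\emph{Step 3 (secondary diagonalisation within the $2\times 2$ blocks).} Inside each of the two remaining $2\times 2$ blocks the order-$\epsilon$ contribution is the projected part of $\epsilon B^{(1)}$. A direct inspection shows that this projected part equals $\epsilon\,\diag(\delta_1,-\delta_1)$, the off-diagonal coupling to coordinate $7$ having been absorbed by Step~1. Since $\delta_1\ne 0$ outside the isotropic case, these two entries are separated uniformly in $|\xi|$ and $\phi$, and a further application of the recursion diagonalises each block and produces the claimed expansion $\pm_1\bar\omega_2|\xi|\pm_2\delta_1\epsilon|\xi|+|\xi|\mathcal O(\epsilon^2)$ with independent signs $\pm_1,\pm_2$.

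The principal obstacle is the uniformity over the unbounded range $|\xi|\in\R$. Because $B^{(0)}(|\xi|,\phi)$ carries the entry $\i\kappa|\xi|$ at position $(7,7)$, one must verify that the smallest spectral gap of $B^{(0)}$ is bounded below by a positive constant independent of $|\xi|$ and $\phi$. This is precisely the content of Proposition~\ref{prop:3.4}: the assumption $\lambda+\mu\ne 0$ excludes coincidence of $\pm\bar\omega_1$ with $\pm\bar\omega_2$ (governing $|\xi|\to\infty$), while $\gamma^2+\lambda+\mu\ne 0$ excludes coincidence of $\pm\sqrt{\bar\omega_1^2+\gamma^2}$ with $\pm\bar\omega_2$ (governing $|\xi|\to 0$); simplicity of the eigenvalues of the one-dimensional system then rules out crossings at intermediate $|\xi|$ by a compactness argument.
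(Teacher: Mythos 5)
Your proof follows essentially the same two-step block-diagonalisation route as the paper: first diagonalise the $\epsilon$-independent part $B^{(0)}$, invoke Proposition~\ref{prop:3.4} for the uniform spectral separation needed to block-diagonalise modulo $\mathcal O(\epsilon^N)$, and then diagonalise the residual $2\times 2$ blocks using the distinct $\epsilon$-homogeneous entries $\pm\delta_1$. One small inaccuracy in Step~2: row and column $7$ of $B^{(1)}$ are \emph{not} identically zero (only rows and columns $1$ and $4$ are; the seventh row and column carry the $\i\gamma\delta_2\sin\frac{3\phi}2$, $\i\gamma\delta_2\cos\frac{3\phi}2$ entries), but what your argument actually requires is only that the $\{1,4,7\}\times\{1,4,7\}$ submatrix of $B^{(1)}$ vanishes — which it does — so the conclusion that $\check\nu_0$ and $\check\nu_1^\pm$ receive no $\mathcal O(\epsilon)$ correction is correct.
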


\begin{rem}
The statement holds true uniform in $|\xi|$. However, it is only of 
use as long as the error terms $|\xi|\epsilon^N$ are smaller than the 
size of the eigenvalues. For $|\xi|\to0$ the eigenvalues of the 
one-dimensional thermo-elastic system behave like 
$\check\nu_0(|\xi|)\sim|\xi|^2$ and $\check\nu_1^\pm(|\xi|)\sim \pm |\xi|$. 
Hence, the statement of \eqref{eq:3.20a} is only of use 
if $|\xi|\epsilon^2 \ll |\xi|^2$, i.e. if  $\epsilon^2\ll |\xi|$.
For $|\xi|\to\infty$ we know similarly $\check\nu_0(|\xi|)\sim |\xi|^2$ 
and $\check\nu_1^\pm(|\xi|)\sim\pm|\xi|$, which in turn implies
that the expansion determines the behaviour of the eigenvalues.

This restriction is by no means a severe one; the expansion is 
only of interest for the 'degenerate' eigenvalues 
$\nu_{2/3}^{\pm_1,\pm_2}(|\xi|,\epsilon,\phi)$ (for which no such 
restriction appears).
\end{rem}

\subsubsection{Diagonalisation for small and large $|\xi|$.} To complete the picture we 
want to give some comments on expansions for small and large values of $|\xi|$ 
under the same assumptions as in Proposition~\ref{prop:3.5}. Using the ideas 
from \cite{Wirth:2008b} we can employ the (block) diagonalisation scheme to 
separate the three non-degenerate eigenvalues from the two degenerate ones 
asymptotically and give full asymptotic expansions for them as $|\xi|$ tends 
to zero or infinity. The obtained expressions coincide with the formulae from 
Propositions~\ref{prop:2.5} and~\ref{prop:2.6}. It remains to understand the 
behaviour of the remaining $2\times 2$-blocks. This can be done directly by 
solving the characteristic polynomial as in \cite[Prop.~2.7]{RW07} or by a 
second diagonalisation scheme. 

We focus on the latter idea and consider the case of small $|\xi|$ first. 
The $2\times 2$-blocks have the form
\begin{equation}
  f_0(|\xi|,\epsilon,\phi) I + \begin{pmatrix} \delta_0(|\xi|,\epsilon,\phi)\\&-\delta_0(|\xi|,\epsilon,\phi) \end{pmatrix} + \mathcal O(|\xi|^2)
\end{equation}
with a function $\delta_0(|\xi|,\epsilon,\phi)\sim \epsilon|\xi|$. If we restrict the consideration to the zone 
\begin{equation}
  \mathcal Z_0 (c) =\{ (|\xi|,\epsilon,\phi)\,:\, |\xi|\le c\epsilon,\,\epsilon\ll1\},
\end{equation}
the remainder can be written as $\epsilon|\xi| \mathcal O(\epsilon^{-1}|\xi|)$ and the standard diagonalisation scheme applied to the last two terms gives full asymptotic expansions in powers of $\epsilon^{-1}|\xi|$ as $\epsilon^{-1}|\xi|\to0$, 
\begin{equation}
  f_0(|\xi|,\epsilon,\phi) \pm \delta_0(|\xi|,\epsilon,\phi) +   .... + \epsilon|\xi| \mathcal O(\epsilon^{-N}|\xi|^N).
\end{equation}
A similar idea applies for large $|\xi|$ in the zone
\begin{equation}
  \mathcal Z_\infty (N) =\{ (|\xi|,\epsilon,\phi)\,:\, \epsilon |\xi|\ge N,\,\epsilon\ll1\}.
\end{equation}
Based on 
\begin{equation}
  f_\infty(|\xi|,\epsilon,\phi) I + \begin{pmatrix} \delta_\infty(|\xi|,\epsilon,\phi)\\&-\delta_\infty(|\xi|,\epsilon,\phi) \end{pmatrix} + \mathcal O(1)
\end{equation}
with a function $\delta_\infty(|\xi|,\epsilon,\phi)\sim \epsilon|\xi|$ it gives asymptotic expansions in powers of $\epsilon|\xi|$ as $\epsilon|\xi|\to\infty$. 
 
\subsection{Cubic media, uniplanar singularities} The Fresnel surface for cubic media has
six uniplanar singularities. Again they are equivalent and it suffices to consider the 
neighbourhood of $\bar\eta=(1,0,0)^T\in\S^2$.

We introduce polar co-ordinates near $\bar\eta$. Let $\epsilon\geq0$ and 
$\phi\in[-\pi,\pi)$. Then we set 
\begin{equation}
\eta = \begin{pmatrix} \eta_1\\\eta_2\\\eta_3 \end{pmatrix}
=\sqrt{1-\epsilon^2} \begin{pmatrix} 1\\0\\0\end{pmatrix}
 +\epsilon\cos\phi\begin{pmatrix}0\\1\\0\end{pmatrix}+ \epsilon\sin\phi
 \begin{pmatrix}0\\0\\1\end{pmatrix},
\end{equation}
and use an asymptotic expansion of $A(\eta)$ as $\epsilon\to0$ 
\begin{equation}
   A(\eta) = A_0+ \epsilon A_1(\phi) + \epsilon^2 A_2(\phi) + \mathcal O(\epsilon^3)
 \end{equation}
 with coefficients
 \begin{subequations}
 \begin{align}
 A_0&= \diag(\tau,\mu,\mu)\\
  A_1(\phi)&= (\lambda+\mu) \begin{pmatrix}&\cos\phi&\sin\phi\\\cos\phi\\\sin\phi\end{pmatrix} \\
  A_2(\phi) &= (\tau-\mu) \begin{pmatrix}-1\\&\cos^2\phi\\&&\sin^2\phi\end{pmatrix}
  + \frac{\lambda+\mu}2   \begin{pmatrix} 0 \\ &  &  \sin2\phi\\& \sin2\phi \end{pmatrix}
 \end{align}
\end{subequations}
to deduce properties of the eigenvalues and eigenprojections of $A(\eta)$ near 
$\bar\eta$.  We restrict considerations to the case when $\tau\ne\mu$. Then the 
following statement follows again by the two-step diagonalisation procedure 
(like in the conical case and as developed in \cite{JaWi}, \cite{RW07}).
\begin{prop} \label{prop:3:6}
Assume $\lambda+\mu\ne0$, $\tau\ne\mu$ and $\tau \ne \lambda+2\mu$.
Then the eigenvalues $\varkappa_j(\eta)$ and the corresponding eigenprojections 
have uniformly in $\phi$ full asymptotic expansions as $\epsilon\to0$. 
The main terms are given by
\begin{subequations}\label{eq:3:29}
\begin{align}
\varkappa_1(\eta) &= \tau - C \epsilon^2 + \mathcal O(\epsilon^3),\\
\varkappa_2(\eta) &= \mu + \frac12\left(C+\sqrt{C^2-(C^2-D^2)\sin^2(2\phi)} \right)\epsilon^2 + \mathcal O(\epsilon^3),\\
\varkappa_3(\eta) &= \mu + \frac12\left(C-\sqrt{C^2-(C^2-D^2)\sin^2(2\phi)} \right)\epsilon^2 + \mathcal O(\epsilon^3),
\end{align}
\end{subequations}
where
\begin{equation}\label{eq:3:30}
C=\frac{(\tau-\mu)^2-(\lambda+\mu)^2}{\tau-\mu},
\qquad D=\lambda+\mu 
\end{equation}
\end{prop}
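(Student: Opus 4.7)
The plan is to mimic the two-step diagonalisation of \cite[Sec.~2.2]{JaWi} employed in the conical case of Proposition~\ref{prop:3.1}. Since $\tau\ne\mu$, the principal part $A_0=\diag(\tau,\mu,\mu)$ carries a spectral gap between its simple eigenvalue $\tau$ and its double eigenvalue $\mu$, so the natural decomposition of $A(\epsilon,\phi)=A_0+\epsilon A_1(\phi)+\epsilon^2 A_2(\phi)+\mathcal O(\epsilon^3)$ is into a $(1,2)$-block structure. The plan is thus (i) to block-diagonalise $A(\epsilon,\phi)$ along this decomposition modulo any prescribed power of $\epsilon$, and (ii) to diagonalise the resulting $2\times 2$ block encoding the splitting of the double eigenvalue $\mu$.

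For step (i) I would construct a formal block-diagonaliser of the form $I+\epsilon N_1(\phi)+\epsilon^2 N_2(\phi)+\cdots$ by the recursion of \cite[Sec.~2.2]{JaWi}. The first correction $N_1(\phi)$ is determined by solving the off-block-diagonal Sylvester-type equation $[A_0,N_1(\phi)]=-A_1(\phi)$, which is uniquely and smoothly solvable because the gap $\tau-\mu\ne0$ is bounded away from zero; the assumption $\lambda+\mu\ne0$ makes $N_1(\phi)$ nontrivial, with a prefactor $(\lambda+\mu)/(\tau-\mu)$. The block-diagonal entries of $A_1(\phi)N_1(\phi)+A_2(\phi)$ then furnish the $\epsilon^2$-corrections: the $(1,1)$-entry immediately gives $\varkappa_1(\eta)=\tau-C\epsilon^2+\mathcal O(\epsilon^3)$ with $C$ as in \eqref{eq:3:30}, while the remaining $(2,2)$-block defines a symmetric $2\times 2$ matrix $E(\phi)$ governing $\varkappa_2,\varkappa_3$.

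For step (ii) I would read off the eigenvalues of $E(\phi)$ by the quadratic formula, using that $\trace E(\phi)=C$ and that $\det E(\phi)$ is a multiple of $\sin^2(2\phi)$; this produces exactly the splitting \eqref{eq:3:29}. Under the additional assumption $\tau\ne\lambda+2\mu$ the two eigenvalues of $E(\phi)$ remain uniformly separated for $\phi\in\mathbb S^1$, which allows a second recursive diagonalisation within the block and extends the asymptotic expansion to all orders in $\epsilon$ uniformly in $\phi$, following exactly the scheme of \cite[Sec.~2.2]{JaWi}.

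The main obstacle is the familiar one at a uniplanar singularity: the eigenvectors associated with the double eigenvalue $\mu$ of $A(\bar\eta)$ cannot be chosen continuously on $\mathbb S^2$ near $\bar\eta$. The polar-coordinate blow-up $(\epsilon,\phi)\in[0,\infty)\times\mathbb S^1$ resolves this discontinuity so that the recursion is carried out on a smooth base, at the cost that the $\epsilon^2$-coefficient of $\varkappa_{2,3}(\eta)$ is genuinely $\phi$-dependent through the $\sin^2(2\phi)$-factor; this angular dependence is the analytic fingerprint of the uniplanar character of the singularity on the Fresnel surface.
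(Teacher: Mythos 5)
Your proposal is correct and follows essentially the same route as the paper: a two-step block-diagonalisation in the spirit of \cite[Sec.~2.2]{JaWi}, first constructing $N_1^{(1)}(\phi)$ from the Sylvester equation $[A_0,N_1]=-A_1$ (the paper indeed obtains $N_1^{(1)}(\phi)=\tfrac{\lambda+\mu}{\tau-\mu}\bigl(\begin{smallmatrix}&-\cos\phi&-\sin\phi\\\cos\phi\\\sin\phi\end{smallmatrix}\bigr)$), reading $\varkappa_1$ from the scalar block of $A_2+A_1N_1^{(1)}$, and then diagonalising the remaining symmetric $2\times2$ block whose trace is $C$ and determinant a multiple of $\sin^2(2\phi)$. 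The only point worth being slightly more precise about is the uniform-separation criterion: it is $C\ne0$ and $D\ne0$ rather than $\tau\ne\lambda+2\mu$ alone, though this is exactly the paper's own formulation and is implied by the stated hypotheses together with positivity of $A(\eta)$.
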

\begin{rem}
This statement reflects what we mean by an uniplanar singularity. Two of 
the eigenvalues coincide up to second order.
\end{rem}

\begin{proof}
We follow the diagonalisation scheme. $A_0$ is already diagonal,  $A_1$ 
does not contain (1,2)-block diagonal entries. To get expansions for the 
eigenvalues we have to apply two steps of block-diagonalisation. First we 
treat $A_1$ by the aid of 
\begin{equation}
   N_1^{(1)}(\phi) = \frac{\lambda+\mu}{\tau-\mu} \begin{pmatrix} & -\cos\phi & -\sin\phi\\\cos\phi\\\sin\phi \end{pmatrix},
\end{equation}
such that $I+\epsilon N_1^{(1)}(\phi)$ block-diagonalises the matrix 
modulo $\epsilon^2$. This preserves $A_0$ and $0=\bdiag_{1,2} A_1$ and 
gives the new $2$-homogeneous component
\begin{equation}
  A_2+A_1 N_1^{(1)},\qquad A_1N_1^{(1)} = \frac{(\lambda+\mu)^2}{\tau-\mu} \diag (1,-\cos^2\phi, -\sin^2\phi).
\end{equation}
The starting terms of the expansion of the first eigenvalue can be read 
directly from these matrices. For the remaining two we have to diagonalise 
the lower $2\times 2$ block. This block has the form
\begin{equation}\label{eq:3:33}
\begin{pmatrix}
   C \cos^2\phi & D \sin\phi\cos\phi \\ D \sin\phi\cos\phi & C \sin^2\phi 
\end{pmatrix}
\end{equation}
with $C$, $D$ from \eqref{eq:3:30}. Eigenvalues of this matrix are 
uniformly separated if the condition
\begin{equation}
C^2 > (C^2-D^2)\sin^2(2\phi),\qquad\text{i.e.}\quad C\ne0,\quad D \ne 0
\end{equation}
is satisfied. Under this assumption the full diagonalisation scheme 
works through and the main terms can be calculated directly and give 
\eqref{eq:3:29}. For completeness we also give a unitary diagonaliser 
of the matrix \eqref{eq:3:33}, namely
\begin{align}
M_2(\phi) &=\frac1{\sqrt{2D^2\sin^2(2\phi)+2C^2\cos^2(2\phi)+2C\cos2\phi\sqrt{\;:}}}\notag
	\\ &\qquad\qquad \times \begin{pmatrix}
 C\cos2\phi+\sqrt{\;:} &- D\sin2\phi\\D\sin2\phi & C\cos2\phi+\sqrt{\;:}
\end{pmatrix}\notag\\
&=\begin{pmatrix}m_{1}(\phi) & m_{2}(\phi) \\ -m_{2}(\phi)& m_{1}(\phi)\end{pmatrix}
\end{align} 
where $\sqrt{\;:}=\sqrt{C^2-(C^2-D^2)\sin^2(2\phi)}$, $\phi\ne \frac\pi2,\frac{3\pi}2$. 
Expressions are extended by continuity.
\end{proof}

\subsubsection{System form}
Again we use the diagonaliser $M(\epsilon,\phi)$ of $A(\epsilon,\phi)$ 
constructed in Proposition~\ref{prop:3:6} to reformulate the thermo-elastic 
system as system of first order. Formulae \eqref{eq:3.10} and 
\eqref{eq:3.11} give the corresponding representation.

\begin{rem}
{\sl 1.}
Since $M^{-1}(\epsilon,\phi)=\diag(1,M_2^{*}(\phi)) (I-\epsilon N_1^{(1)}(\phi)) +\mathcal O(\epsilon^2)$ in the notation of the proof of Proposition~\ref{prop:3:6} 
it follows that the coupling functions satisfy
\begin{subequations}
\begin{align}
  a_1(\epsilon,\phi) &= 1 +\mathcal O(\epsilon^2)\\
  a_2(\epsilon,\phi) &= \epsilon \frac{\tau-\lambda-2\mu}{\tau-\mu} (m_1(\phi)\cos\phi+m_2(\phi)\sin\phi)  +\mathcal O(\epsilon^2)\\
  a_3(\epsilon,\phi) &= \epsilon \frac{\tau-\lambda-2\mu}{\tau-\mu} (m_1(\phi)\sin\phi-m_2(\phi)\cos\phi)    +\mathcal O(\epsilon^2)
\end{align}  
\end{subequations}
Since $\tau\ne\lambda+2\mu$ the function $a_2(\phi)$ vanishes only  
for $\phi=k\frac\pi2$, $k\in\mathbb Z$, while $a_3(\phi)$ vanishes 
for $\phi=(2k+1)\frac\pi4$, $k\in\mathbb Z$. 
\\ 
{\sl 2.} Note that in contrast to the conic situation the eigenvalues 
coincide to second order in the degenerate direction, while the coupling 
functions still vanish to first order (if we approach the degeneracy from 
parabolic directions).
\end{rem}

\subsubsection{Asymptotic expansion of eigenvalues as $\epsilon\to0$} We write 
the coefficient matrix
$B(|\xi|,\epsilon,\phi)$ as sum of homogeneous components in $\epsilon$, 
cf. \eqref{eq:3.17}. This gives
\begin{align}
  B^{(0)}(|\xi|,\phi)&= \begin{pmatrix} \sqrt\tau && & & & & \i\gamma \\ 
  & \sqrt\mu \\
  && \sqrt\mu \\
  &&& -\sqrt\tau &&& \i\gamma \\
  &&&&-\sqrt\mu\\
  &&&&&-\sqrt\mu \\
  -\frac\i2\gamma &&& -\frac\i2 \gamma &&& \i\kappa|\xi| 
  \end{pmatrix}\\
  B^{(1)}(|\xi|,\phi) &= \begin{pmatrix} 
  && & & & & 0\\
  && & & & &\i\gamma a_2^{(1)}(\phi)\\
  && & & & &\i\gamma a_3^{(1)}(\phi)\\
  && & & & & 0\\
  && & & & &\i\gamma a_2^{(1)}(\phi)\\
  && & & & &\i\gamma a_3^{(1)}(\phi)\\
  0 & \frac\i2 \gamma a_2^{(1)}(\phi)& \frac\i2 \gamma a_3^{(1)}(\phi)&  0 & \frac\i2 \gamma a_2^{(1)}(\phi)& \frac\i2 \gamma a_3^{(1)}(\phi)& 0\\  
  \end{pmatrix}
\end{align}
and $B^{(2)}(|\xi|,\phi)$ has entries on the diagonal, in the last 
row and last column. In order to apply a block-diagonalisation as 
$\epsilon\to0$ we assume that the matrix $B^{(0)}(|\xi|,\phi)$ has 
as many distinct eigenvalues as possible. This is ensured if 
$\mu\ne\tau$, $\mu\ne\tau+\gamma^2$ and we can
(1,2,1,2,1)-block-diagonalise this matrix family. Note, that due to the structure of the last
rows and columns, the system decouples modulo $\epsilon^2$ into a one-dimensional thermo-elastic system and one containing the elastic eigenvalues. The coupling comes only into
play for the $\epsilon^3$ entries.

\begin{prop}\label{prop:3.7}
Assume $\mu\ne\tau$, $\mu\ne\tau+\gamma^2$. Then the eigenvalues and 
eigenprojections of $B(|\xi|,\epsilon,\phi)$ have full asymptotic 
expansions as $\epsilon\to0$. The main terms are given by
\begin{subequations}
\begin{align}
\nu_0(|\xi|,\epsilon,\phi) &= \check\nu_0(|\xi|) + |\xi|\mathcal O(\epsilon^3),\\
\nu_1^\pm(|\xi|,\epsilon,\phi) &= \check\nu_1^\pm(|\xi|) + |\xi|\mathcal O(\epsilon^3),\\
\nu_{2/3}^{\pm_1,\pm_2} (|\xi|,\epsilon,\phi) &= \pm_1 \sqrt{\mu} |\xi| +  \frac{C \pm_2 \sqrt{C^2-(C^2-D^2)\sin^2(2\phi)} }{4\sqrt\mu}|\xi|\epsilon^2 +   |\xi| \mathcal O(\epsilon^3)
\end{align}
\end{subequations}
where $\check\nu_0(|\xi|)$ and $\check\nu_1^\pm(|\xi|)$ are eigenvalues of the one-dimensional thermo-elastic system with parameter $\sqrt\tau$. The signs $\pm_1$ and $\pm_2$ are independent and the parameters $C$ and $D$ are given by \eqref{eq:3:30}.
\end{prop}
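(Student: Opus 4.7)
The plan is to apply the recursive diagonalisation scheme of \cite[Sec.~2.1]{JaWi} to the expansion
\begin{equation*}
|\xi|^{-1}B(|\xi|,\epsilon,\phi)=B^{(0)}(|\xi|,\phi)+\epsilon B^{(1)}(|\xi|,\phi)+\epsilon^2 B^{(2)}(|\xi|,\phi)+\mathcal O(\epsilon^3),
\end{equation*}
exactly as was done for the conical case in Proposition~\ref{prop:3.5}. I would first record the splitting of $B^{(0)}$: rows and columns $(1,4,7)$ form the symbol of the one-dimensional thermo-elastic system with parameter $\sqrt\tau$, while rows and columns $(2,3,5,6)$ form a diagonal block with entries $\pm\sqrt\mu$ of multiplicity two. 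The preceding lemma gives simplicity of the three eigenvalues $\check\nu_0(|\xi|),\check\nu_1^\pm(|\xi|)$ of the thermo-elastic block, and the assumptions $\mu\ne\tau$ (needed as $|\xi|\to\infty$, where $\check\nu_1^\pm\sim\pm\sqrt\tau\,|\xi|$) and $\mu\ne\tau+\gamma^2$ (needed as $|\xi|\to0$, where $\check\nu_1^\pm\sim\pm\sqrt{\tau+\gamma^2}\,|\xi|$ by \eqref{eq:2.43}) together with a compactness argument over $\phi\in\S^1$ yield a uniform spectral gap between the two blocks on all of $\R_+\times\S^1$.

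Second, under this uniform gap I would carry out a $(1,2,1,2,1)$-block-diagonalisation: the standard Sylvester recursion produces matrices $I+\epsilon N^{(1)}+\cdots+\epsilon^{N-1}N^{(N-1)}$ whose conjugation puts $|\xi|^{-1}B$ into the corresponding block-diagonal form modulo $\mathcal O(\epsilon^N)$, uniformly in $(|\xi|,\phi)$. The decisive structural input—already emphasised in the text—is that $B^{(1)}$ has no entries within either of the blocks: its non-zero terms couple rows $(2,3,5,6)$ with column $7$, hence only across the $(3,4)$-splitting into thermo-elastic and degenerate elastic parts. Therefore the Schur-type contribution of the first elimination step preserves the coarse block decomposition and the inter-block coupling enters only at order $\epsilon^3$. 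Combined with the fact that $B^{(2)}$ has only diagonal and last-row/column entries, this yields $\nu_0=\check\nu_0+|\xi|\,\mathcal O(\epsilon^3)$ and $\nu_1^\pm=\check\nu_1^\pm+|\xi|\,\mathcal O(\epsilon^3)$.

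Third, I would analyse the remaining $4\times4$ elastic block. After the first block-diagonalisation it takes the form $|\xi|\sqrt\mu\,\diag(1,1,-1,-1)+\epsilon^2|\xi|\,\mathcal M(\phi)+\mathcal O(\epsilon^3)$, where $\mathcal M(\phi)$ is built from $B^{(2)}|_B$ together with the Schur remainder of eliminating $B^{(1)}$. By the explicit computation $A_2+A_1 N_1^{(1)}$ carried out in the proof of Proposition~\ref{prop:3:6}, combined with the scalar expansion $\sqrt{\mu+\epsilon^2 X}=\sqrt\mu+\tfrac{X}{2\sqrt\mu}\epsilon^2+\mathcal O(\epsilon^4)$, each of the two $2\times2$ sub-blocks of $\mathcal M$ (one attached to $+\sqrt\mu$, one to $-\sqrt\mu$) equals $\tfrac{1}{2\sqrt\mu}$ times the matrix in \eqref{eq:3:33}. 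Under the assumptions $\tau\ne\lambda+2\mu$ (so $C\ne0$) and $\lambda+\mu\ne0$ (so $D\ne0$), the discriminant $C^2-(C^2-D^2)\sin^2(2\phi)=C^2\cos^2(2\phi)+D^2\sin^2(2\phi)$ is uniformly bounded below; hence the eigenvalues of each $2\times2$ sub-block are uniformly separated in $\phi$, and a final round of the diagonalisation scheme produces the stated expansion for $\nu_{2/3}^{\pm_1,\pm_2}$, with independent signs $\pm_1$ (the choice of $\pm\sqrt\mu$) and $\pm_2$ (the choice of root within the sub-block).

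The main obstacle will be the order-by-order bookkeeping: one has to check explicitly that the $\epsilon^2$ Schur contribution from eliminating $B^{(1)}$ creates no inter-block entry, and that within block $B$ the combined contribution of $B^{(2)}$ and of the Schur term reproduces exactly the matrix \eqref{eq:3:33} with the prefactor $1/(2\sqrt\mu)$. Both are routine algebraic checks using the explicit formulae for $B^{(1)},B^{(2)}$ and $N^{(1)}$—the latter being uniquely determined by the Sylvester equation enforced by the first diagonalisation step—and the existence of the full asymptotic expansion itself follows at once from the uniform spectral gap established in the first step.
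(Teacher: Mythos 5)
The paper does not give a proof of Proposition~\ref{prop:3.7}; it only records, before the statement, that under $\mu\ne\tau$, $\mu\ne\tau+\gamma^2$ one may $(1,2,1,2,1)$-block-diagonalise and that ``the system decouples modulo $\epsilon^2$\ldots the coupling comes only into play for the $\epsilon^3$ entries.'' Your proposal follows exactly this route, so in spirit it matches what the paper has in mind; you add the useful detail of identifying which factors of the decomposition need $\mu\ne\tau$ versus $\mu\ne\tau+\gamma^2$, and the scalar expansion step $\sqrt{\mu+\epsilon^2 X}=\sqrt\mu+\tfrac{X}{2\sqrt\mu}\epsilon^2+\mathcal O(\epsilon^4)$ that accounts for the prefactor $\tfrac{1}{4\sqrt\mu}$ in the stated eigenvalue.

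The gap is in the step you flag yourself as ``routine algebraic bookkeeping.'' After the first sweep eliminating $B^{(1)}$, the order-$\epsilon^2$ block-diagonal part is $(B^{(2)}+B^{(1)}N^{(1)})^{\bdiag}$, and the restriction of $B^{(1)}N^{(1)}$ to the eigenspace of $\sqrt\mu$ is
\[
\bigl(B^{(1)}\bigr)_{j,7}\,\bigl[(\sqrt\mu\, I-B^{(0)}|_{\{1,4,7\}})^{-1}\bigr]_{7,7}\,\bigl(B^{(1)}\bigr)_{7,j'}
= \pm\tfrac{\gamma^2}{2}\,a_j^{(1)}(\phi)\,a_{j'}^{(1)}(\phi)\,\mathcal R_{77}(|\xi|),
\]
with $\mathcal R_{77}(|\xi|)=\dfrac{\mu-\tau}{\sqrt\mu(\mu-\tau-\gamma^2)-\i\kappa|\xi|(\mu-\tau)}$. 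This is a rank-one matrix whose prefactor is $|\xi|$-dependent and generically has nonzero imaginary part, and whose direction $(a_2^{(1)},a_3^{(1)})$ is not an eigenvector of \eqref{eq:3:33}. It is therefore not a routine check that this combines with the $B^{(2)}$ contribution $\tfrac{1}{2\sqrt\mu}\eqref{eq:3:33}$ to give precisely that real matrix: take e.g.\ $\phi=0$, where $a_3^{(1)}=0$ and the $\epsilon^2$-block becomes $\diag\bigl(\tfrac{C}{2\sqrt\mu}\pm\tfrac{\gamma^2 c^2}{2}\mathcal R_{77},\,0\bigr)$ with $c=\tfrac{\tau-\lambda-2\mu}{\tau-\mu}\ne0$, so the nonzero eigenvalue is shifted away from $\tfrac{C}{2\sqrt\mu}$. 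You must either show that this Schur term is absorbed elsewhere in the expansion, or verify explicitly that its effect on the hyperbolic pair is of order $\epsilon^3$ (which is also what the later remark in Section~4 about the imaginary part vanishing to third order asserts, and is similarly non-obvious). As it stands, the claim that $\mathcal M(\phi)$ equals $\tfrac{1}{2\sqrt\mu}$ times \eqref{eq:3:33} is the crux of the proposition, and your proposal leaves it unverified.
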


\begin{rem} Similar to Proposition~\ref{prop:3.5} this statement is uniform in $|\xi|$. It will
be of particular importance for us that the hyperbolic eigenvalues $\nu_{2/3}^\pm$ coincide
up to second order in $\epsilon$ with the corresponding (roots of) eigenvalues of the elastic
operator. This will be the key observation to transfer stationary phase estimates
from elastic systems to the thermo-elastic one.
\end{rem}

\subsection{Hexagonal media}\label{sec3.3} Finally we want to discuss the 
case of hexagonal media. The elastic operator defined by 
\eqref{eq:1.4}--\eqref{eq:1.5} is invariant under rotations in $x_3$-direction.
We will make use of this fact and reduce considerations to a two-dimensional 
situation corresponding to cross-sections of the Fresnel surface. 

\begin{figure}
\includegraphics[width=5cm]{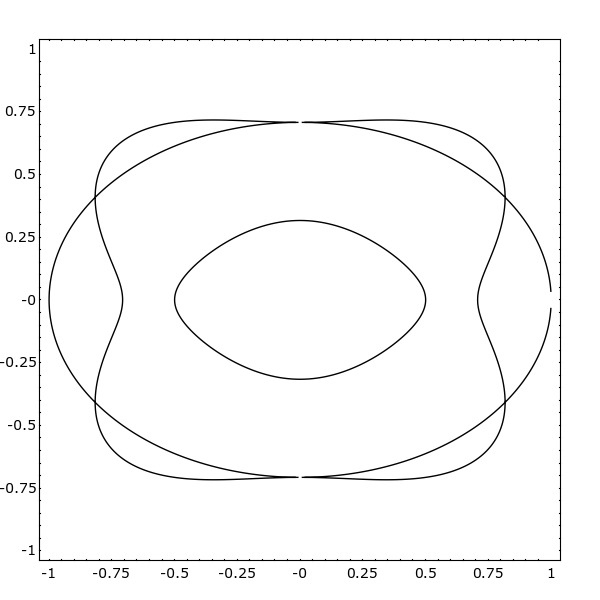}
\caption{Cut of the Fresnel surface for hexagonal media, $\eta_2=0$. The parameters are chosen as 
$\tau_1=4$, $\tau_2=10$, $\lambda_1=2$, $\lambda_2=4$, $\mu=2$. The complete surface is obtained by rotation along the vertical axis.}
\end{figure}

As already pointed out in Section~\ref{sec:2:hex} degenerate directions are
$\pm(0,0,1)^\top$, which are uniplanar. They could be handled similar to the 
cubic case, but rotational invariance makes estimates simpler. 
There are further circles of degenerate directions if 
\begin{equation}\label{eq:3.40}
   {\tau_2      - 2\tau_1}\ge  {\lambda_2 +       2 \mu}.
\end{equation}
We exploit rotational symmetry and consider the system only in the frequency
hyperplane $\eta_1=0$. Then it is possible to express the eigenvectors 
$r_j(\eta)$ corresponding to eigenvalues $\varkappa_j(\eta)$ smoothly
and the thermo-elastic system can be reformulated as system of first order
in full analogy to the general treatment in Section~\ref{sec2}. The derived
asymptotic expansions for eigenvalues and the description of their behaviour
transfers away from the degeneracy and has to be equipped with an additional 
description near these degenerate directions. 


Apart from the $\xi_3$-axis it is possible to find smooth families of
eigenvectors $r_j(\eta)$ of $A(\eta)$. This follows directly from rotational
invariance combined with one-dimensional perturbation theory of matrices,
\cite{Kato:1980}. If we assume that the frequency support of initial data
and therefore of the solution $U$, $\theta$ is conically separated from
the uniplanar directions we can follow Section~\ref{sec2} and rewrite as 
first order system in $V(t,\xi)$ with coefficient matrix $B(\xi)$ given 
by \eqref{eq:2.9} and of (1,1,5)-block structure. In what follows, we will
ignore the scalar hyperbolic blocks and consider the remaining $5\times 5$ 
matrix.

Based on the discussions from Section~\ref{sec:2:hex} we know that this 
$5\times5$ block is non-degenerate in the sense that its $1$-homogeneous 
part has distinct eigenvalues if $(\tau_1-\mu)(\tau_2-\mu)\ne0$. We assume
this in the sequel. But this means that the theory of Section~\ref{sec2}
is applicable and gives a full discription of eigenvalues and 
eigenprojections and we are done. 

Near the uniplanar directions, i.e., on the $\xi_3$-axis, we follow
the same approach as for cubic media. We introduce polar co-ordinates
around this direction and construct expressions for the corresponding
asymptotics. There is one major simplification,
due to rotational invariance the construction is independent of the angular
variable.

\section{Dispersive estimates}\label{sec4}
We will show how to use the information obtained in Sections~\ref{sec2} 
and~\ref{sec3} to derive $L^p$--$L^q$ decay estimates for solutions to 
thermo-elastic systems. Some of the ideas we present are general in the 
sense that they can be applied to arbitrary space dimensions, however, 
our moin focus will be the three-dimensional case and the examples 
considered in Section~\ref{sec3}. 

The estimates we have in mind are micro-localised to (a) non-degenerate  
parabolic, (b) non-degenerate hyperbolic or (c) degenerate directions. 
The first two situations generalise the consideration of \cite{RW07}, 
\cite{Wirth:2007c} taking also into account the estimates due to Sugimoto \cite{Sug94}, \cite{Sug96}, while the treatment of degenerate directions is 
inspired by the work of Liess \cite{Lie91}, \cite{Lie04}, \cite{Lie06}.

\subsection{Non-degenerate directions}\label{sec41}  We will consider two 
situations and micro-localise solutions to either open sets of parabolic 
directions or tubular neighbourhoods of compact parts of regular submanifolds 
of hyperbolic directions. 

\subsubsection{Estimates in parabolic directions and for parabolic modes}
Let first $\psi,\tilde \psi\in C_0^\infty (\S^{n-1})$ be supported in 
$\mathcal U$ with $\tilde\psi=1$ on $\supp\psi$ and $\chi\in C^\infty(\R_+)$ 
a cut-off function satisfying $\chi(s)=0$ for $s\le\epsilon$ and $\chi(s)=1$ 
for $s\ge 2\epsilon$. We extend both $\psi$ and $\tilde\psi$ as 
0-homogeneous functions to $\R^n$. Then we consider the solution to the 
first order system
\begin{equation}\label{eq:4.1}
   \D_t V  = B(D) V,\qquad V(0,\cdot)=\tilde \psi(D)  V_0,
\end{equation}
with data $V_0\in\mathcal S(\R^n;\C^{2n+1})$. Note, that this is 
well-defined and $B(\xi)$ needs only to be defined on $\supp\tilde\psi$.  

\begin{lem}[Parabolic estimate]\label{lem:4.1}
Assume that $\supp\psi$ is contained in the set of parabolic directions.
Then the solutions to \eqref{eq:4.1} satisfy the a-priori estimates
\begin{subequations}
\begin{align}
  \|\chi(|D|)\psi(D) V(t,\cdot)\|_q &\lesssim \e^{-Ct} \| V_0\|_{p,r},\\
  \|(1-\chi(|D|))\psi(D) V(t,\cdot)\|_q &\lesssim (1+t)^{-\frac n2(\frac1p-\frac1q)} \|V_0\|_p
\end{align}  
\end{subequations}
for all $1\le p\le2\le q\le\infty$ and with Sobolev regularity $r>n(1/p-1/q)$.
\end{lem}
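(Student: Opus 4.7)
The plan is to carry out a spectral analysis of $B(\xi)$ on $\supp\tilde\psi$ and treat the high- and low-frequency pieces by rather different mechanisms. The evolution is the matrix-valued Fourier multiplier $m_t(\xi)=\e^{\i tB(\xi)}\tilde\psi(\eta)$; Propositions~\ref{prop:2.5} and~\ref{prop:2.6} furnish the decomposition
\[ \e^{\i tB(\xi)}=\sum_k \e^{\i t\nu_k(\xi)} P_k(\xi) \]
for small and for large $|\xi|$, with $P_k(\xi)$ smooth and $0$-homogeneous. On the compact intermediate annulus in $|\xi|$ the eigenvalues are confined to the strip $\{\Im z\ge C\}$ (no real eigenvalues by Proposition~\ref{prop22}), and a resolvent representation
\[ \e^{\i tB(\xi)}=\frac1{2\pi\i}\oint_\Gamma \e^{\i tz}(z-B(\xi))^{-1}\,\d z \]
along a contour $\Gamma$ enclosing the spectrum furnishes a smooth symbol uniformly in $\xi$ across possible eigenvalue collisions.

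For the high-frequency piece I would invoke Corollary~\ref{cor:2.8}, which yields $\Im\nu_k(\xi)\ge C>0$ uniformly on $\{|\xi|\ge\epsilon\}\cap\supp\psi$, so that $\chi(|\xi|)\psi(\eta)\e^{\i tB(\xi)}\tilde\psi(\eta)$ is a matrix-valued H\"ormander--Mikhlin symbol with operator norm $\lesssim\e^{-Ct}$ on any $L^r$, $1<r<\infty$ (the polynomial blow-up in $t|\xi|^2$ produced by differentiation in $\xi$ is absorbed into the exponential factor). The transition from $L^p$ to $L^q$ then proceeds by dyadic decomposition in $|\xi|$, Bernstein's inequality on each annulus, and summation using Sobolev regularity $r>n(1/p-1/q)$.

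For the low-frequency piece Proposition~\ref{prop:2.9} gives $|\e^{\i t\nu_k(\xi)}|\le\e^{-ct|\xi|^2}$ uniformly on $\supp\psi$, since $b_k(\eta)\ge c>0$ on the compact set of parabolic directions in question. Writing $((1-\chi)\psi)(D)V(t)=K_t\ast V_0$ with kernel $K_t=\mathcal F^{-1}[((1-\chi)\psi)(\cdot)\e^{\i tB(\cdot)}\tilde\psi(\cdot)]$, the truncated Gaussian majorant yields $\|m_t\|_{r'}\lesssim t^{-n/(2r')}$ for $r'\le 2$, hence by Hausdorff--Young $\|K_t\|_r\lesssim t^{-n(1-1/r)/2}$ for $r\ge 2$. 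Young's convolution inequality with $1/r=1-1/p+1/q$ then delivers the decay rate $t^{-n(1/p-1/q)/2}$ for $1\le p\le 2\le q\le\infty$, and the short-time bound comes from plain boundedness of $m_t$ combined with Bernstein on the compactly-supported-in-frequency piece; the two together upgrade $t$ to $1+t$.

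The main obstacle is the globalisation of the spectral picture: Propositions~\ref{prop:2.5} and~\ref{prop:2.6} describe only the asymptotic regimes at $|\xi|\to 0$ and $|\xi|\to\infty$, and on the intermediate annulus eigenvalues of $B(\xi)$ may in principle collide, which would ruin a naive diagonalisation. The resolvent-integral workaround sketched above resolves this because $B(\xi)$ is smooth and its spectrum lies uniformly in $\{\Im z\ge C\}$ on $\{|\xi|\ge\epsilon\}\cap\supp\psi$; the only place where the spectral gap to the real axis degenerates (like $|\xi|^2$ as $\xi\to 0$) is precisely in the low-frequency regime, where it is exactly matched against the heat-type decay extracted above.
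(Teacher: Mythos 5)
Your proposal follows essentially the same route as the paper: decompose $\e^{\i tB(\xi)}$ spectrally via Propositions~\ref{prop:2.5} and~\ref{prop:2.6}, use a contour/resolvent representation on the intermediate compact annulus to handle possible eigenvalue collisions, exploit the uniform spectral gap $\Im\nu\ge C>0$ at high frequencies, and the Gaussian decay $\Im\nu\sim|\xi|^2$ at low frequencies. The final $L^p$--$L^q$ passage via Hausdorff--Young/H\"older/Sobolev is also what the paper uses.

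Two points are imprecise, though neither is fatal. First, the claim that $\chi(|\xi|)\psi(\eta)\e^{\i tB(\xi)}\tilde\psi(\eta)$ is a H\"ormander--Mikhlin multiplier with $L^r\to L^r$ norm $\lesssim\e^{-Ct}$ is false for the hyperbolic modes: $\partial_\xi^\alpha\e^{\i t\nu_j^\pm(\xi)}\sim t^{|\alpha|}\e^{-Ct}$ with $\nu_j^\pm\sim\pm|\xi|\omega_j(\eta)+\i C$, so the Mikhlin condition $|\xi|^{|\alpha|}|\partial^\alpha m|\lesssim 1$ fails as $|\xi|\to\infty$ at fixed $t$ (as it must --- the wave propagator is not a Calder\'on--Zygmund operator, and the Sobolev loss $r>n(1/p-1/q)$ is genuine). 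Fortunately you do not need that claim: the dyadic/Bernstein step you describe next only requires the trivial $L^2\to L^2$ bound $\e^{-Ct}$ on each annulus, which follows from the sup-norm bound on the symbol via Plancherel (or the contour integral), and this is exactly what the paper uses (exponential decay of $H^s$-norms plus Sobolev embedding). Second, your low-frequency argument via $\|K_t\|_r\le\|m_t\|_{r'}$ followed by Young with $1/r=1+1/q-1/p$ requires $r\ge 2$, i.e.\ $1/p-1/q\ge 1/2$; for the remaining range one should instead argue as the paper does, estimating $\|V(t)\|_q\le\|\hat V(t)\|_{q'}\le\|m_t\|_s\|\hat V_0\|_{p'}\le\|m_t\|_s\|V_0\|_p$ with $1/s=1/p-1/q$ (this works for all $1\le p\le 2\le q\le\infty$), or interpolate between the endpoint $p=1$, $q=\infty$ and the trivial $p=q=2$.
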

\begin{proof}[Sketch of proof]
The proof of this estimate is straightforward from the two-dimensional 
situation considered in \cite{RW07}. For small frequencies we write the solution $V$ as sum 
\begin{equation}\label{eq:4.3}
   V(t,x) = \sum_{\nu(\xi)\in\spec B(\xi)} \e^{\i t \nu(\D)} P_{\nu}(\D) V_0,
\end{equation}
$P_\nu$ the corresponding eigenprojections. We know that 
$\|P_\nu(\xi)\|\lesssim 1$ and $\Im \nu(\xi)\sim |\xi|^2$ by Proposition~\ref{prop:2.5}. 
Now each of the appearing terms can be estimated using the $L^p$--$L^{p'}$ 
boundedness of the Fourier transform (for $p\in[1,2]$) and H\"older inequality. 
Similarly, the representation \eqref{eq:4.3} in combination with the bound $\Im \spec B(\xi) \ge C$ gives  
exponential decay of $L^2$ and $H^s$ norms and this combined with Sobolev 
embedding yields the desired estimate. 

For intermediate frequencies we may have to deal with multiplicities and
resulting singularities of the spectral projections. Instead of \eqref{eq:4.3} we use a spectral calculus representation which implies  
\begin{equation}
   | \hat V(t,\xi) | \le \e^{-C t}  \frac1{2\pi} \int_\Gamma \|(\zeta-B(\xi))^{-1}\|\d \zeta \lesssim \e^{-Ct}
\end{equation}
based on the compactness of the relevant set of frequencies $\xi$ and the 
bound on the imaginary part due to Corollary~\ref{cor:2.8} / Proposition~\ref{prop:2.9}. 
Here, $\Gamma$ is a smooth curve encircling the family of parabolic eigenvalues for the relevant $\xi$.
\end{proof}

If we consider hyperbolic directions we know that the parabolic eigenvalues 
are separated from the hyperbolic ones and we can use the spectral 
projection associated to the group of parabolic eigenvalues to separate them 
from the hyperbolic one(s). In this case the estimate of the above theorem
is valid for the corresponding `parabolic modes' of the solution. So we
can restrict  consideration to hyperbolic eigenvalues near hyperbolic 
directions.

\subsubsection{Treatment of non-degenerate hyperbolic directions}
We consider {\em only} the for us interesting case when hyperbolic directions
 form part of a 
regular submanifold of $\S^{n-1}$ and coupling functions vanish to first order,
i.e., we assume that the corresponding coupling function 
$a_j:\S^{n-1}\supset\mathcal U\to \R$ satisfies
\begin{equation}
   \d a_j(\eta)\ne 0\quad\text{when $a_j(\eta)=0$, $\eta\in\mathcal U$.}
\end{equation}
This implies that $M_j=\{\eta\in\mathcal U\;:\;a_j(\eta)=0\}$ is regular of 
dimension $n-2$, the normal derivative $\partial_n a_j(\eta)\ne0$ never 
vanishes and $a_j(\eta)\le\epsilon$ defines a tubular neighbourhood of $M_j$ 
with a natural parameterisation. The desired dispersive estimate is related to geometric properties
of the section $\mathcal S_{(M_j)}$ of the Fresnel surface lying directly over $M_j$,
\begin{equation}
\mathcal S_{(M_j)} = \{{ \omega_j^{-1}(\eta)}\eta : \eta \in M_j \} = \mathcal S_j \cap {\mathrm{co}\,} M_j.
\end{equation}
Here $\mathrm{co}\,M_j$ denotes the cone over $M_j$.
For dimensions $n\ge 4$ we have to distinguish between different cases, depending on
whether the cross-section $\mathcal S_{(M_j)}$ of the Fresnel surface satisfies a convexity assumption or not. By the latter we mean that any intersection of $\mathcal S_j$
with a hyperplane tangent to $\mathrm{co}\, M_j$ is convex in a neighbourhood of $\mathcal S_{(M_j)}$.

If this convexity assumption is satisfied (or if $n=3$ and therefore $\dim M_j=1$), we define
the convex Sugimoto index of $\mathcal S_{(M_j)}$ as maximal order of contact of 
$\mathcal S_{(M_j)}$ with hyperplanes normal to $\mathrm{co}\, M_j$.

\begin{thm}[Hyperbolic estimate, convex case]\label{thm:4.2}
Assume that $\psi$ is supported in a sufficiently small tubular neighbourhood 
of the regular hyperbolic submanifold $M_j$ and that $\mathcal S_{(M_j)}$ satisfies the
convexity assumption. Let further $\gamma_j=\gamma(\mathcal S_{(M_j)})$ 
be defined as above.

Then the solutions to \eqref{eq:4.1} satisfy the a-priori estimate
\begin{align}
  \|\psi(\D)P_{\nu_j}(\D) V(t,\cdot)\|_q &\lesssim (1+t)^{-(\frac12+\frac{n-2}{\gamma_j})(\frac1p-\frac1q)} \| V_0\|_{p,r}
\end{align}  
for all $p\in(1,2]$, $pq=p+q$ and with Sobolev regularity $r>n(1/p-1/q)$.
\end{thm}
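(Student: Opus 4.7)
The plan is to establish the estimate by Riesz--Thorin interpolation between the trivial $L^2$--$L^2$ bound (from Plancherel together with $\|P_{\nu_j}(\xi)\|\lesssim 1$ on the support of $\psi$) and an endpoint $L^1$--$L^\infty$ decay estimate
\begin{equation}\label{eq:plan:endpt}
   \|\psi(\D)P_{\nu_j}(\D)V(t,\cdot)\|_\infty \lesssim (1+t)^{-\frac12-\frac{n-2}{\gamma_j}}\|V_0\|_{1,r_0}
\end{equation}
for a suitable Sobolev regularity $r_0>n$. After interpolation and absorption of the Sobolev loss via embedding, this produces the claimed decay rate. Estimate~\eqref{eq:plan:endpt} amounts to a pointwise bound for the convolution kernel
$K(t,x)=\int e^{i(x\cdot\xi + t\nu_j(\xi))}\psi(\xi)P_{\nu_j}(\xi)\,d\xi$, and the core of the argument is the analysis of this kernel.

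To bound $K$, I introduce coordinates adapted to $M_j$. Write $\xi=r\eta$ with $\eta\in\S^{n-1}$; since $d a_j\ne 0$ on $M_j$, a tubular neighbourhood of $M_j$ in $\mathcal U$ admits smooth coordinates $(\theta,s)$ with $\theta\in M_j$ and $s=a_j(\eta)$. Combining Propositions~\ref{prop:2.5} and~\ref{prop:2.6} with the imaginary-part estimates of Subsection~2.4, the hyperbolic eigenvalue decomposes as $\nu_j^\pm(\xi)=\pm r\omega_j(\eta)+iH(\xi)$, with $H(\xi)\sim s^2$ uniformly for $r\ge\epsilon$ and $H(\xi)\sim s^2 r^2$ uniformly for $r\le\epsilon$. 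Decompose $\psi=\sum_{k\ge k_0}\psi_k$ dyadically in the normal variable $s$, with $\psi_k$ concentrated on $|s|\sim 2^{-k}$; on each layer the damping factor $e^{-tH(\xi)}$ supplies $e^{-ct\,2^{-2k}}$ in the high-frequency regime and its obvious counterpart at low frequencies.

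The kernel on each dyadic layer is then estimated by integrating in two essentially independent directions. Integration in the transverse variable $s$ over a band of width $2^{-k}$ combined with the Gaussian damping yields $\min(2^{-k},(ct)^{-1/2})$ by an elementary Fresnel-type integral, and summing over $k$ produces an effective transverse decay of order $t^{-1/2}$. The tangential integration in $\theta\in M_j$ carries oscillatory phase $r(x\cdot\eta(\theta,0)+t\omega_j(\eta(\theta,0)))$, whose level set is the cross-section $\mathcal S_{(M_j)}$; the convexity assumption together with the definition of $\gamma_j$ as maximal order of contact allows the Sugimoto-type stationary-phase estimate of \cite{Sug94, Sug96} to be applied uniformly in the remaining parameters and produces a gain of $t^{-(n-2)/\gamma_j}$. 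The remaining radial integration is treated by a further dyadic decomposition in $r$: high-frequency shells benefit from the exponential damping, while the low-frequency part is harmless thanks to the prefactor $r^{n-1}$ from the Jacobian. Combining the three contributions yields~\eqref{eq:plan:endpt}.

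The hard part will be the uniform extension of the Sugimoto bound to the parallel slices $\{s=\mathrm{const}\}$ of the Fresnel surface inside the tubular neighbourhood of $M_j$: although $\gamma_j$ is defined on $\mathcal S_{(M_j)}$ itself, the tangential oscillatory integral that actually appears is performed over a nearby slice with $s\ne 0$, and one must verify that the convexity hypothesis and the order-of-contact index persist uniformly as $s\to 0$. This is ensured by the real-analytic dependence of $A(\eta)$ on $\eta$ (so that finite-type conditions are open) but requires a quantitative van der Corput bound uniform in the parameter $s$. Once this ingredient is in place, the Gaussian $s$-integration and the tangential van der Corput estimate decouple cleanly, the dyadic series in $k$ converges, and the endpoint estimate~\eqref{eq:plan:endpt} follows.
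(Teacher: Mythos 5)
Your proposal captures the core mechanism of the paper's proof correctly: split the tubular neighbourhood of $M_j$ into tangential coordinates $\check\eta\in M_j$ and the normal coordinate $s=a_j(\eta)$, apply a Sugimoto-type stationary phase estimate along $M_j$ to obtain the gain $t^{-(n-2)/\gamma_j}$, exploit the Gaussian damping $\Im\nu_j\sim s^2$ (respectively $s^2|\xi|^2$ at low frequency) in the normal direction to obtain the extra factor $t^{-1/2}$, and control the radial integral using the Sobolev weight. You also correctly identify the crucial technical issue — that the van der Corput estimate must be uniform over the family of nearby slices $\{a_j(\eta)=s\}$ as $s\to0$ — and the mechanism by which this holds (analyticity makes the finite-type condition open; the paper settles this by invoking Ruzhansky's multi-dimensional van der Corput lemma from \cite{Ruzh:09}, which is exactly the quantitative, parameter-uniform tool you ask for).

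The organizational route differs from the paper. You propose an extra dyadic decomposition in $s$ and then sum, whereas the paper performs the $\tilde\eta=s$ integral directly as a Gaussian; the two are equivalent but yours is less economical. More substantively, you invoke plain Riesz--Thorin between $L^1\!\to\! L^\infty$ and $L^2\!\to\! L^2$, whereas the paper follows Brenner's dyadic argument in $|\xi|$. The distinction matters for the Sobolev bookkeeping: a Riesz--Thorin interpolation of $T_t\langle\D\rangle^{-r_0}$ with a fixed $r_0>n$ gives $L^p\!\to\! L^{p'}$ but with the fixed loss $r_0$, not the sharper interpolated regularity $r>n(1/p-1/q)$ claimed in the theorem. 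To obtain the latter one needs to interpolate the Sobolev index as well (Stein's analytic interpolation) or run Brenner's dyadic-in-$|\xi|$ argument, which is what the paper does and which is also the origin of the restriction to $p>1$; your sentence ``absorption of the Sobolev loss via embedding'' is glossing over this step. Finally, a minor point: after your tangential stationary phase the remaining amplitude still carries a factor $|\xi|^{n-1}$ from the radial Jacobian and $(t|\xi|)^{-(n-2)/\gamma_j}$ from the oscillation, so the radial integral requires the Sobolev weight not merely for absolute convergence but to recover the advertised power of $t$; the paper makes this explicit via the exponent count $r\ge n-(n-2)/\gamma_j$ on the dyadic shells. With the interpolation step made rigorous, your plan proves the theorem.
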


\begin{proof}
First, we outline the strategy of the proof. We split variables in the tubular
neighbourhood of the regular hyperbolic submanifold $M_j$, one coordinate
being the defining function $a_j(\eta)$ and the other parameterising
points on $M_j$. We have
to combine a (simple) parabolic type estimate in normal directions taking 
care of the imaginary part of the phase
with stationary phase estimates for the integration along $M_j$. The stationary phase estimate
is done first and follows the lines of \cite{Sug94}, \cite{Sug96} along with \cite[Sect. 5]{RW:11}. 

It is sufficient to show the estimate for $t\ge1$. 
We follow the treatment of Brenner \cite{Brenner75} 
and decompose the Fourier integral representing the corresponding
hyperbolic modes of the solution $V$ into dyadic pieces. For large and 
intermediate frequencies this amounts to 
estimate for all $k\in\mathbb N_0$ 
\begin{multline}
  \mathcal I_k(t) = \sup_{z\in\mathbb R^n} \bigg|
    \int_{\tilde\eta=-\epsilon}^{\tilde \eta=\epsilon} \int_{\check\eta\in M_j} 
    \int_{2^{k-1}\le |\xi|\le 2^{k+1}} 
    \mathrm e^{\i t|\xi| (z\cdot\eta + |\xi|^{-1}\nu_j(\xi))}\\ 
    \times
    p_j(\xi) \chi_k(\xi)
    |\xi|^{n-1-r}
    \d|\xi| \d \check\eta \d \tilde\eta
    \bigg|
\end{multline}
with the notation $z=x/t$, $\xi=|\xi|\eta$, $\eta\simeq(\check \eta,\tilde\eta)$
with $\check\eta\in M_j$ and $\tilde\eta = a_j(\eta)$. The amplitude $p_j(\xi)$
arises from the spectral projector $P_{\nu_j}(\D)$ and the phase
$\nu_j(\xi)$ is complex-valued with $\Im\nu_j(\xi)\sim \tilde\eta^2$ uniform in
$\xi\in\supp \chi_k$ and $k\in\mathbb N_0$. 

If $z+\nabla_\xi\nu_j(\xi)\ne0$, 
$\xi/|\xi|\in M_j$ or if $z$ is not near a direction from $M_j$,
the principle of non-stationary phase implies and gives a rapid decay. 
It suffices to restrict to $z$ corresponding to stationary points. We use the method
of stationary phase to estimate the integral over $M_j$, this can be done
uniformly over $\xi$ and $\tilde\eta$,
provided $\epsilon$ is chosen small enough and yields
an estimate of the form
\begin{equation}
 \left|  \int_{\check\eta\in M_j} \dots \d\check\eta \right| \le C t^{-\frac{n-2}{\gamma_j} }
 |\xi|^{n-1-r  -\frac{n-2}{\gamma_j} } \e^{-c\tilde\eta^2 t} 
\end{equation}
uniform in $k$ and $|\tilde\eta|\le\epsilon$. In order to obtain this estimate we apply Ruzhansky's 
multi-dimensional van der Corput lemma, \cite{Ruzh:09}, based on the uniformity of the 
Sugimoto index $\gamma(\mathcal S_j \cap \mathrm{co}\, \{ \eta : a_j(\eta) = \tilde\eta,
\; \eta\approx \check\eta\})$ for small $\tilde\eta$ and the uniform bounds on the appearing 
amplitude. Similar to \cite{RW07} the imaginary part of the phase 
can be incorporated in the estimate for the amplitude. Integration over $\tilde\eta$ yields a further decay of $t^{-1/2}$, while integrating over $\xi$ and using $|\xi|\sim 2^k$ yields
\begin{equation}
   \mathcal I_k(t) \le C t^{-\frac12-\frac{n-2}{\gamma_j}} 2^{k(n-r  -\frac{n-2}{\gamma_j})}.
\end{equation}
Hence, we need $r\ge n-\frac{n-2}{\gamma_j}$   (compared to the elasticity or wave equation with $r\ge n-\frac{n-1}{\gamma}$) to apply Brenner's argument and obtain the desired estimate for the high frequency part. The required regularity follows from using Sobolev embedding for small $t$.

The treatment of small frequencies is somewhat simpler. We do not apply
a dyadic decomposition, but still have to use a stationary phase argument
along $M_j$ combined with the behaviour of the imaginary part of the phase
away from it,
\begin{align*}
  \mathcal I(t) &= \sup_{z\in\mathbb R^n} \left|
    \int_{\tilde\eta=-\epsilon}^{\tilde \eta=\epsilon} \int_{\check\eta\in M_j} 
    \int_{ |\xi|\le 1} 
    \mathrm e^{\i t|\xi| (z\cdot\eta + |\xi|^{-1}\nu_j(\xi))}
    p_j(\xi) \chi(\xi)
    |\xi|^{n-1}
    \d|\xi| \d \check\eta \d \tilde\eta
    \right|\\
    	&\le   C t^{-\frac{n-2}{\gamma_j}} 
	   \int_{ |\xi|\le 1}  \int_{\tilde\eta=-\epsilon}^{\tilde \eta=\epsilon} 
    \mathrm e^{- c \tilde\eta^2 t |\xi|^2} |\xi| \d\tilde\eta 
    |\xi|^{n-2 - \frac{n-2}{\gamma_j}}
    \d|\xi| \le C t^{-\frac12-\frac{n-2}{\gamma_j}}.
\end{align*}
\end{proof}

Without proof we comment on the non-convex situation. If the convexity assumption is violated
we have to replace the convex Sugimoto index by a corresponding non-convex one
$\gamma_0(\mathcal S_{(M_j)})$. This
is defined as the maximum over the minimal contact orders of $\mathcal S_{(M_j)}$ with hyperplanes normal to the cone $\mathrm{co}\, M_j$, the maximum taken over all points
of $\mathcal S_{(M_j)}$. The price we have to pay for non-convexity is a loss of decay.

\begin{thm}[Hyperbolic estimate, non-convex case]
Assume that $\psi$ is supported in a sufficiently small tubular neighbourhood 
of the regular hyperbolic submanifold $M_j$ and that $\mathcal S_{(M_j)}$ does not satisfy the
convexity assumption. Let further $\widetilde{\gamma_j}=\gamma_0(\mathcal S_{(M_j)})$ 
be the non-convex Sugimoto index.

Then the solutions to \eqref{eq:4.1} satisfy the a-priori estimate
\begin{align}
  \|\psi(\D)P_{\nu_j}(\D) V(t,\cdot)\|_q &\lesssim (1+t)^{-(\frac12+\frac{1}{\widetilde{\gamma_j}})(\frac1p-\frac1q)} \| V_0\|_{p,r}
\end{align}  
for all $p\in(1,2]$, $pq=p+q$ and with Sobolev regularity $r>n(1/p-1/q)$.
\end{thm}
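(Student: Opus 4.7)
The plan is to follow the proof of Theorem~\ref{thm:4.2} essentially verbatim, replacing only the step that invokes convexity of $\mathcal S_{(M_j)}$. I would localise to a tubular neighbourhood of a compact part of $M_j$, parameterise by $(\check\eta,\tilde\eta)$ with $\tilde\eta=a_j(\eta)$, dyadically decompose in $|\xi|$, and reduce matters to bounding oscillatory integrals $\mathcal I_k(t)$ of the same shape as in the convex case, with complex phase $z\cdot\eta+|\xi|^{-1}\nu_j(\xi)$ and $\Im\nu_j(\xi)\sim\tilde\eta^2$. Non-stationary phase again disposes of $z$ outside a neighbourhood of the relevant Fresnel directions.

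The only step that must be reworked is the inner tangential integration over $M_j$. In the convex case Ruzhansky's multi-dimensional van der Corput lemma yields a full gain of $t^{-(n-2)/\gamma_j}$ from this $(n-2)$-dimensional integration. Without convexity such a full-dimensional gain is no longer available. Instead, at each stationary point $\bar{\check\eta}\in M_j$ the definition of $\widetilde{\gamma_j}$ provides at least one direction $e\in T_{\bar{\check\eta}}M_j$ along which $\mathcal S_{(M_j)}$ has contact order at most $\widetilde{\gamma_j}$ with the tangent hyperplane. Applying the classical one-dimensional van der Corput lemma along this line yields a gain of $t^{-1/\widetilde{\gamma_j}}$; the remaining $n-3$ tangential directions are integrated trivially in $L^\infty_z$, and this is precisely where the loss from $(n-2)/\widetilde{\gamma_j}$ to $1/\widetilde{\gamma_j}$ occurs. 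A partition of unity together with the upper semi-continuity of the contact order on a compact piece of $M_j$ should patch these directional estimates into a single uniform bound.

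The rest of the argument would then be identical. Integrating the Gaussian factor $\e^{-c\tilde\eta^2 t}$ over $\tilde\eta$ produces the additional $t^{-1/2}$, so that
\begin{equation*}
\mathcal I_k(t)\le C\, t^{-\frac12-\frac{1}{\widetilde{\gamma_j}}}\, 2^{k(n-r-\frac{1}{\widetilde{\gamma_j}})}.
\end{equation*}
The dyadic sum converges provided $r\ge n-\frac{1}{\widetilde{\gamma_j}}$, and Brenner's interpolation then gives the stated $L^p$--$L^q$ bound, with Sobolev embedding handling the short-time regime. Small frequencies are treated verbatim as in Theorem~\ref{thm:4.2}, with $(n-2)/\gamma_j$ simply replaced by $1/\widetilde{\gamma_j}$.

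The main obstacle will be uniformity: the favourable direction $e$ may depend on the base point $\bar{\check\eta}$ and, through the varying normal parameter $\tilde\eta$, on the level of the cross-section, so one has to carry out the one-dimensional van der Corput estimate with constants uniform in this choice. Combined with the need to absorb the contribution of $\Im\nu_j\sim\tilde\eta^2$ into the amplitude in a manner compatible with a single chosen direction of integration, this is the delicate technical point that distinguishes the non-convex from the convex situation of Theorem~\ref{thm:4.2}.
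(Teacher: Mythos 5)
The paper states this theorem explicitly ``without proof,'' so there is no written argument to compare against; your proposal is exactly the extension the framework and the citation to \cite{Sug96} intend. Replacing the full $(n-2)$-dimensional stationary phase gain of $t^{-(n-2)/\gamma_j}$ by a single one-dimensional van der Corput estimate along a direction of contact order at most $\widetilde{\gamma_j}$, bounding the remaining $n-3$ tangential directions trivially, and then repeating the $\tilde\eta$-Gaussian integration and the Brenner dyadic summation with the threshold $r\ge n-1/\widetilde{\gamma_j}$ is the standard Sugimoto non-convex argument and yields precisely the stated rate; the uniformity of the van der Corput constants over a compact piece of $M_j$, with the favourable direction varying with the base point and the level $\tilde\eta$, is indeed the one genuinely delicate point and you have identified it correctly.
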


\subsubsection{Application to cubic and hexagonal media} 
Because of its importance later on we remark that in our applications to 
three-dimensional thermo-elasticity the manifolds $M_j$ are parts of 
circles on $\S^2$, i.e. can be seen as intersections of $\S^2$ with a 
cone. So we have to look at the corresponding sections of the 
Fresnel surface. In this case $\gamma_j$ is just the maximal order 
of tangency between the curve $\mathcal S_{(M_j)}$and its tangent lines. If the curvature of this
curve is nowhere vanishing, then $\gamma_j=2$. Furthermore,
algebraicity of $\mathcal S$ of order $6$ implies that the highest order of contact 
is $6$ and therefore $\gamma_j\in \{2,\ldots,6\}$ is the admissible range of these indices.

For cubic media there are two types of regular hyperbolic submanifolds. One is
up to symmetry given by the circle $\eta_3=0$ on $\S^2$ and the corresponding
eigenvalue is equal to $\mu$. Thus the section of the Fresnel surface
is just a circle and therefore its curvature is nowhere vanishing. Similarly,
for intersections of the Fresnel surface with the plane $\eta_2=\eta_3$
we obtain the hyperbolic eigenvalue $\varkappa = \eta_2^2(\tau-\lambda)+\eta_1^2\mu$.
It is a simple calculation\footnote{Parametrising by the angle, the hyperbolic 
eigenvalue is given by $\varkappa(\phi)=\mu+\frac{\tau-\lambda-2\mu}2\sin^2\phi$ and
it remains to check that 
$\partial_\phi^2\sqrt{\varkappa(\phi)}+\sqrt{\varkappa(\phi)}\ne0$, see \cite{Wirth:2007c} 
for such a calculation.} to show that the curvature of the corresponding 
section of the Fresnel surface is nowhere vanishing as soon as $\lambda\ne\tau$
and $\mu\ne0$. Hence, $\gamma_j=2$ in both cases.

For hexagonal media regular hyperbolic submanifolds correspond to circles on
the Fresnel surface. Again, $\gamma_j=2$.

\subsection{Cubic media in 3D} We want to discuss the derivation for estimates 
near degenerate directions by the example of cubic media in three-dimensional 
space and combine them with the general estimates from Section~\ref{sec41}.  

\subsubsection{Conic points}
The following statement resembles \cite[Thm.~1.5]{Lie04}. In \cite[Sect.~3]{Lie06} a stronger decay rate
is obtained for some conic degenerations, but they require a sufficiently bent cone and we
can not guarantee that in our case. 

\begin{thm}[Conic degeneration]\label{thm:4.3} Assume $U_1$, $U_2$ and $\theta_0$ are 
micro-locally supported in a sufficiently small conical neighbourhood of a 
conically degenerate point on $\widehat{\S^2}$. Then the corresponding 
solution to the thermo-elastic system for cubic media satisfies the a-priori estimate
\begin{equation}
   \| \sqrt{A(\D)} U(t,\cdot), U_t(t,\cdot), \theta(t,\cdot) \|_{q} \lesssim (1+t)^{-\frac12 (\frac1p-\frac1q)} 
   \| \sqrt{A(\D)} U_1, U_2, \theta_0 \|_{p,r}
\end{equation}
for $p\in(1,2]$, $pq=p+q$ and $r>3(1/p-1/q)$.
\end{thm}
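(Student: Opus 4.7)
The plan is to reduce the estimate to dispersive bounds for the seven eigenmodes of $B(|\xi|,\epsilon,\phi)$ from Section~\ref{sec3} microlocalised near the conic direction $\bar\eta=\frac{1}{\sqrt{3}}(1,1,1)^T$, and then to combine the estimates of Section~\ref{sec41} with a Liess-type stationary-phase argument tailored to the conic singularity of the Fresnel surface.

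First, I would pass to the polar co-ordinates \eqref{eq:3.3} and, by means of the diagonaliser $M(\epsilon,\phi)$ of Proposition~\ref{prop:3.1}, reformulate the system as $\D_tV=B(\D)V$ on a conical neighbourhood $\mathcal V$ of $\bar\eta$. The norm of $(\sqrt{A(\D)}U,U_t,\theta)$ is equivalent, up to lower-order terms absorbable by the Sobolev regularity $r$, to the corresponding norm of $V$, so it suffices to bound $V$. Proposition~\ref{prop:3.5} then decouples the system uniformly in $|\xi|$ and $\phi$ into the seven modes $\nu_0,\nu_1^\pm,\nu_{2/3}^{\pm_1,\pm_2}$ up to smoothing remainders, handled by a standard perturbative argument.

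Next I would treat the three modes $\nu_0,\nu_1^\pm$ belonging to the embedded one-dimensional thermo-elastic system at speed $\bar\omega_1$. By part~(3) of Proposition~\ref{prop:3.4} all three have strictly positive imaginary parts at $\bar\eta$, namely $\Im\nu\ge C$ for $|\xi|\ge c$ and $\Im\nu\sim|\xi|^2$ for $|\xi|\le c$; by continuity this persists on a small conical neighbourhood of $\bar\eta$. Consequently, the parabolic argument of Lemma~\ref{lem:4.1} applies verbatim and yields decay of rate $(1+t)^{-\frac{3}{2}(\frac1p-\frac1q)}$, strictly better than required.

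The heart of the proof is the treatment of the four conic modes $\nu_{2/3}^{\pm_1,\pm_2}$. From Proposition~\ref{prop:3.5} we have
\begin{equation*}
\nu_{2/3}^{\pm_1,\pm_2}(|\xi|,\epsilon,\phi)=\pm_1\bar\omega_2|\xi|\pm_2\delta_1|\xi|\epsilon+|\xi|\mathcal O(\epsilon^2),
\end{equation*}
whose real part has, to leading order, level sets that are spherical cones with opening determined by $\delta_1/\bar\omega_2$, while the $\mathcal O(\epsilon^2)$ remainder contains a non-negative imaginary part providing parabolic damping away from $\epsilon=0$. I would write the corresponding contribution to $V$ as a Fourier integral in the co-ordinates $(|\xi|,\epsilon,\phi)$ and follow Brenner's dyadic reduction as in the proof of Theorem~\ref{thm:4.2}. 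The variable $\phi$ parametrises the circular base of the cone and, on compact subsets of $\mathcal V$, provides a non-degenerate order-two stationary phase, whence Ruzhansky's van der Corput lemma \cite{Ruzh:09} yields a factor $t^{-1/2}$. The imaginary part of the phase, controlled near the degeneracy by $\Im\nu_{2/3}^{\pm_1,\pm_2}\sim a_{2,3}^2(\epsilon,\phi)$ in the high-frequency regime and by $|\xi|^2$ for small $|\xi|$, is absorbed into the amplitude as in \cite{RW07}.

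The main obstacle is that no second non-vanishing principal curvature is available transverse to the cone, which is the geometric content of a conic singularity. The radial variable $\epsilon$ provides no additional decay, since the leading phase is affine in $\epsilon$, and the second factor $t^{-1/2}$ one would typically extract from a sphere-like surface via Theorem~\ref{thm:4.2} is missing. That exactly one direction of decay survives across a conic degeneracy is the content of \cite[Thm.~1.5]{Lie04}, whose strategy must be adapted here to accommodate the $\epsilon$-dependent imaginary part produced by the thermo-elastic coupling. Combining the mode-wise bounds via the triangle inequality and using Sobolev embedding to absorb the loss $r>3(\frac1p-\frac1q)$ produces the claimed estimate.
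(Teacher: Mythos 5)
Your proposal follows essentially the same route as the paper's proof: blow up the conic direction via the polar co-ordinates of \eqref{eq:3.3}, decouple via Proposition~\ref{prop:3.5}, apply Brenner's dyadic decomposition, extract a single $t^{-1/2}$ from the $\phi$-stationary phase, absorb the non-negative imaginary part of $\nu_{2/3}^{\pm_1,\pm_2}$ into the amplitude, and invoke the Liess framework \cite{Lie91}, \cite{Lie04} for the conic degeneracy; you also correctly identify why no second decay factor from the $\epsilon$-integration is available. One point you gloss over: the $\phi$-stationary phase is \emph{not} uniformly non-degenerate down to $\epsilon=0$ --- the Hessian is of size $\sim\epsilon$, so the stationary phase yields $(t|\xi|\epsilon)^{-1/2}$ rather than a plain $t^{-1/2}$; this extra $\epsilon^{-1/2}$ must be cancelled against the Jacobian $\epsilon\,\d\epsilon$ of the polar change of variables, which is exactly how the paper arrives at the bound $\left|\int_0^{2\pi}\dots\d\phi\right|\lesssim t^{-1/2}|\xi|^{3/2-r}\epsilon^{1/2}$ before integrating in $\epsilon$ and $|\xi|$. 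Since this cancellation is what makes the argument actually close, it deserves to be made explicit, but it is a refinement of, not a departure from, your strategy.
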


\begin{proof}
The main idea is that the proof of \cite{Lie91} uses polar co-ordinates around the singularities of the Fresnel surface similar to our treatment in Section~\ref{sec3}. Stationary phase arguments are
applied in tangential direction and are uniform for small radii, while the final estimate follows after integration over the remaining variables. 

It suffices to prove the statement for $t\ge 1$, the small
time estimate is a direct consequence of Sobolev embedding theorem in combination with the obvious energy estimate.
Similar to the hyperbolic estimate discussed before, we apply a dyadic decomposition of frequency space (localised to a small conic neighbourhood of the degenerate direction). The estimate for single dyadic components follows \cite{Lie91} resp. \cite[Thm. 1.5]{Lie04}; 
the only thing we have to check is that the necessary assumptions are satisfied uniform with respect to $|\xi|$ and $k\in\mathbb N$. We consider
\begin{multline}\label{eq:conic-dyadic}
  \mathcal I_k(t) = \sup_{z\in\R^3} \bigg| \int_0^{\tilde\epsilon} \int_0^{2\pi} \int_{2^{k-1}\le|\xi|\le 2^{k+1}} 
  \e^{\i t |\xi| ( z\cdot\eta + |\xi|^{-1} \nu_j (|\xi|,\epsilon,\phi) )} \\
\times\;  p_j(|\xi|,\epsilon,\phi)
  \chi_k(|\xi|)
  |\xi|^{2-r} \d |\xi| \d\phi \epsilon \d\epsilon \bigg|,
\end{multline}
where $\eta\in\S^2$ denotes the point with polar co-ordinates $(\epsilon,\phi)$ near the conic
degenerate direction and $\xi=|\xi|\eta$. The amplitude $p_j(|\xi|,\epsilon,\phi)$ arises from the spectral projector (given in terms of the diagonaliser) constructed in the blown-up polar co-ordinates and $\chi_k(\xi)$ corresponds to the dyadic decomposition. The complex phase
$\nu_j(|\xi|,\epsilon,\phi)$ is described in Proposition~\ref{prop:3.5}. Its imaginary part is non-negative and vanishes to second order in $\epsilon=0$ as well as for three hyperbolic manifolds emanating from the conic degenerate point. Again we may treat this imaginary part as part of the
amplitude and apply stationary phase estimates for the integral with respect to $\phi$. 
As the approximation of the phase modulo $\mathcal O(\epsilon^2)$ is independent of $\phi$ and uniform
in $|\xi|$ this yields
\begin{equation}
 \left| \int_0^{2\pi} \dots \d\phi\right| \lesssim t^{-\frac12} |\xi|^{\frac32-r} \epsilon^{\frac12}
\end{equation}
uniform in $|\xi|$, $k$ and $0\le\epsilon\le\tilde\epsilon$. There is no further benefit from the imaginary part (as there can not be a lower bound with respect to $\epsilon$) and integrating with
respect to $|\xi|$ and $\epsilon$ concludes the estimate for $\mathcal I_k(t)$. 
Similarly, we estimate the small frequency part
\begin{align}
  \mathcal I(t) &= \sup_{z\in\R^3} \bigg| \int_0^{\tilde\epsilon} \int_0^{2\pi} \int_{|\xi|\le 1} 
  \e^{\i t |\xi| ( z\cdot\eta + |\xi|^{-1} \nu_j (|\xi|,\epsilon,\phi) )} 
  p_j(|\xi|,\epsilon,\phi)
  \chi(|\xi|)
  |\xi|^{2} \d |\xi| \d\phi\epsilon \d\epsilon \bigg| \notag \\&\le C t^{-\frac12},
\end{align}
such that Brenner's method again yields the desired decay estimate.
\end{proof}

\subsubsection{Uniplanar points} The treatment of uniplanar degeneracies follows
\cite{Lie04}. We have to make one further additional assumption related to
the shape of certain curves on the Fresnel surface near the degenerate point. To be precise, 
we either require that
\begin{equation}\label{eq:uni-cond}
  \Omega\cap  \mathcal S \cap \Pi \quad\text{has non-vanishing curvature}
\end{equation}
for $\Omega\subset\R^n$ an open neighbourhood of the uniplanarly degenerate point 
and for any plane $\Pi$ sufficiently close and parallel to the common tangent plane at the unode. This condition is equivalent to the technical assumption (1.12) made in \cite{Lie06}
If \eqref{eq:uni-cond} is violated, we need to consider Sugimoto indices 
$\gamma_u = \gamma(  \Omega\cap  \mathcal S \cap \Pi\subset\Pi)$, i.e., contact orders of 
these planar curves with its tangent planes combined with a uniformity assumption.
Under assumption \eqref{eq:uni-cond} the index is given by $\gamma_u=2$.

For cubic media we have to use
the statement of Proposition~\ref{prop:3:6} to determine the index $\gamma_u$. Using the notation of \eqref{eq:3:30}, it suffices to calculate the indices of the indicator curves determined 
by $\epsilon^2 \big( \mu+C\pm\sqrt{C^2\cos^2(2\phi)+D^2\sin^2(2\phi)}\big)=1$. This yields
\begin{equation}
\gamma_u\in\{2,3,4\}
\end{equation}
In the nearly isotropic case we have $\gamma_u=2$, away from it $\gamma_u=3$. Both are generic, while the borderline case with $\gamma_u=4$ is not.
The asymptotic 
construction of the eigenvalues and eigenprojections near the uniplanarly degenerate
point of Proposition~\ref{prop:3.7} yields that the assumption is satisfied uniformly
for the phase functions appearing in all dyadic components of the operator.

\begin{thm}[Uniplanar degeneration]\label{thm:4.4}
Assume $U_1$, $U_2$ and $\theta_0$ are 
micro-locally supported in a sufficiently small conical neighbourhood of a 
uniplanarly degenerate point on $\widehat{\S^2}$. Let further 
$\gamma_u$ be the index of the uniplanar point. Then the corresponding 
solution to the thermo-elastic system for cubic media satisfies the a-priori estimate
\begin{equation}
   \| \sqrt{A(\D)} U(t,\cdot), U_t(t,\cdot), \theta(t,\cdot) \|_{q} \lesssim (1+t)^{-(\frac12+\frac1{\gamma_u})(\frac1p-\frac1q)} 
   \| \sqrt{A(\D)} U_1, U_2, \theta_0 \|_{p,r}
\end{equation}
for $p\in(1,2]$, $pq=p+q$ and $r>3(1/p-1/q)$.
\end{thm}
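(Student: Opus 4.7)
The plan is to follow the blueprint of Theorems~\ref{thm:4.2} and~\ref{thm:4.3} combined with the uniplanar analysis of Liess \cite{Lie04}. The splitting $1/2+1/\gamma_u$ mirrors the splitting $1/2+(n-2)/\gamma_j$ appearing in Theorem~\ref{thm:4.2}: the $1/2$ comes from radial integration supported by the Gaussian damping provided by $\Im\nu_j\gtrsim\epsilon^2$, whereas the $1/\gamma_u$ comes from a van der Corput estimate on the angular oscillatory integral whose degeneracy order is dictated by the contact order of the planar indicator curves from~\eqref{eq:3:29}. It suffices to prove the decay estimate for $t\ge 1$, the short-time part being handled by Sobolev embedding and the energy estimate, and (by the parabolic estimate of Lemma~\ref{lem:4.1} applied to the remaining spectral projectors) to restrict attention to the hyperbolic modes associated with the two coalescing eigenvalues at the unode.

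First I would pass to the first order system, introduce the polar coordinates $(\epsilon,\phi)$ around the uniplanarly degenerate direction from the setup of Proposition~\ref{prop:3:6}, and apply a dyadic decomposition $\chi_k$ supported in $2^{k-1}\le|\xi|\le 2^{k+1}$. For each dyadic piece this reduces the target estimate, via the Brenner scheme as in the proof of Theorem~\ref{thm:4.3}, to a uniform bound on
\begin{equation*}
  \mathcal I_k(t)=\sup_{z\in\R^3}\left|\int_0^{\tilde\epsilon}\!\!\int_0^{2\pi}\!\!\int_{2^{k-1}\le|\xi|\le 2^{k+1}}\!\!\e^{\i t|\xi|(z\cdot\eta+|\xi|^{-1}\nu_j(|\xi|,\epsilon,\phi))}p_j\chi_k|\xi|^{2-r}\,\d|\xi|\,\d\phi\,\epsilon\,\d\epsilon\right|
\end{equation*}
for one of the hyperbolic eigenvalues $\nu_j$ described by Proposition~\ref{prop:3.7}. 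The imaginary part of $\nu_j$ satisfies $\Im\nu_j\gtrsim\epsilon^2 G(|\xi|)$ with $G(|\xi|)\sim|\xi|^2$ at low and $G(|\xi|)\sim 1$ at high frequencies, and I would absorb it in the amplitude as a non-negative damping factor $\e^{-c\epsilon^2 G(|\xi|)t}$.

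The decisive step is the angular stationary phase estimate. By Proposition~\ref{prop:3.7} the $\phi$-dependence of the real part of $|\xi|^{-1}\nu_j$ is $O(\epsilon^2)$ and is governed by the indicator curves from~\eqref{eq:3:29}. These curves have maximal contact order $\gamma_u$ with their tangent lines in a small neighbourhood of the unode, and the same estimate persists for the perturbed indicator curves arising at $\epsilon>0$ and for $|\xi|$ in each dyadic shell because the phase admits joint asymptotic expansions in $\epsilon$ uniformly in $|\xi|$. Ruzhansky's multidimensional van der Corput lemma~\cite{Ruzh:09} combined with the radial estimate and integration of $\epsilon\,\d\epsilon$ against the Gaussian weight then yields
\begin{equation*}
  \mathcal I_k(t)\lesssim t^{-\frac12-\frac{1}{\gamma_u}}\,2^{k(3-r-\frac{1}{\gamma_u})},
\end{equation*}
so that Brenner's dyadic summation closes for $r>3-1/\gamma_u$, which is subsumed in the condition $r>3(1/p-1/q)$ under the duality $pq=p+q$. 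The low-frequency contribution is treated by the same scheme without the dyadic cut-off and using the small-$|\xi|$ form of $G$.

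The main obstacle I foresee is the uniformity of the angular van der Corput estimate across dyadic scales $k$ and radii $\epsilon\in[0,\tilde\epsilon]$. Ruzhansky's lemma is not immediately quantitative enough to be applied off-the-shelf to a family of phase functions; one must verify that a derivative of the angular phase of order exactly $\gamma_u$ is uniformly bounded below on the relevant critical sets. This is precisely where Proposition~\ref{prop:3.7} is essential, since it gives the joint expansion of the phase in $\epsilon$ and $|\xi|$; it is also where the hypothesis that $\gamma_u$ is the maximum contact order (respectively the convexity assumption~\eqref{eq:uni-cond}) enters, ensuring that no larger contact order appears along the perturbed indicator curves as $\epsilon$ and $|\xi|^{-1}$ move away from zero.
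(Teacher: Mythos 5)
Your proposal has a genuine gap at its central step: you rely on a Gaussian damping bound $\Im\nu_j\gtrsim\epsilon^2 G(|\xi|)$ to extract the $t^{-1/2}$ factor by integration in the radial variable $\epsilon$, mirroring the mechanism of Theorem~\ref{thm:4.2}. That bound is false near a unode. Proposition~\ref{prop:3.7} shows that the $\epsilon^2$ term in the expansion of the hyperbolic eigenvalues is real (it equals $\frac{C\pm\sqrt{C^2-(C^2-D^2)\sin^2 2\phi}}{4\sqrt\mu}|\xi|\epsilon^2$, coinciding with the expansion of the elastic eigenvalue), and the paper states explicitly that here the imaginary part of $\nu_j$ vanishes to third order in $\epsilon$ and is therefore ``of no benefit''. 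Consequently there is no uniform lower bound of order $\epsilon^2$, the Gaussian weight $\e^{-c\epsilon^2 G(|\xi|)t}$ is not available, and your radial integration loses the decisive $t^{-1/2}$. This also means the heuristic in your opening paragraph — that the $1/2$ comes from damping, as in the hyperbolic Theorem~\ref{thm:4.2} — does not transfer: in the hyperbolic case the $\Im\nu_j\sim a_j^2\sim\tilde\eta^2$ bound is genuine, but at a uniplanar point the coupling functions and the damping degenerate one order faster.

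The route the paper actually takes is different in structure, not only in detail. Because the real part of $\nu_j$ agrees with the elastic $\omega_j$ up to $\mathcal O(\epsilon^3)$, one reduces to a purely elastic (crystal-acoustics) oscillatory integral and changes variables to $\rho=\omega_j(\eta)|\xi|$ and $\eta/\omega_j(\eta)\in\mathcal S_j$, turning the problem into the decay of the Fourier transform of a surface-carried density on the Fresnel sheet. Then one introduces \emph{distorted} polar coordinates on the surface (level sets being cuts by planes parallel to the common tangent plane at the unode), applies the method of stationary phase first in the radial surface coordinate (where critical points are non-degenerate and supply a $t^{-1/2}$), and afterwards in the angular coordinate, where either the non-vanishing curvature condition \eqref{eq:uni-cond} gives another non-degenerate critical point or the van der Corput lemma is invoked with order $\gamma_u$. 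That double stationary phase, not a damping term, is the source of the exponent $\frac12+\frac1{\gamma_u}$. If you wish to salvage your plan, you would need to replace the damping step by this surface-density argument (following \cite{Lie04}, \cite{Lie06}, \cite{Lie09}); keeping track of how the uniform expansions of Proposition~\ref{prop:3.7} control the perturbed indicator curves in these new coordinates is indeed the delicate point, as you anticipate at the end, but the damping route itself cannot be repaired.
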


\begin{proof}[Sketch of proof] We will sketch the major differences to the treatment of 
conic degeneracies. We will again use polar co-ordinates and estimate corresponding 
dyadic components \eqref{eq:conic-dyadic}, where now $\nu_j(|\xi|,\epsilon,\phi)$ 
is determined by Proposition~\ref{prop:3.7}. The imaginary part of $\nu_j(|\xi|,\epsilon,\phi)$ 
vanishes to third order and is of no benefit, while the real part coincides to third order with the corresponding elastic eigenvalue. This allows to use estimates from \cite{Lie04} and \cite[Sect.~4]{Lie06}, the main
difference to the previous situation is that we now use stationary phase estimates for both, 
the angular and the radial integral. The proof itself then coincides with the corresponding proof
for cubic elasticity, cf. \cite{Lie09}. 

Using a change of variables the integral is written in the new variables $\omega_j(\eta)|\xi|$ (i.e., roughly $\Re\nu_j$) and $\eta/\omega_j(\eta) \in \mathcal S_j$. In this form the phase splits and the crucial estimate is just
a Fourier transform of a density carried by the sheet of the Fresnel surface (with possible singularity in the unode). This is calculated by introducing {\em distorted} polar co-ordinates on
the surface. As level sets we use cuts of the surface by planes parallel to the common tangent plane. Then we will at first apply the method of stationary phase to the radial variable in these co-ordinates. These stationary points are non-degenerate and we use the obtained first terms in the asymptotics for a second stationary phase argument in the angular variables. The condition 
\eqref{eq:uni-cond} would imply again that stationary points are non-degenerate and we are done, while if \eqref{eq:uni-cond} is violated we use the lemma of van der Corput instead to prove the estimate.
\end{proof}

\subsubsection{Collecting the estimates} It remains to collect all the estimates into a final statement for cubic media. Parabolic directions are treated by Lemma~\ref{lem:4.1};
hyperbolic manifolds away from degenerate points are covered by Theorem~\ref{thm:4.2}. 
The remaining 24 degenerate directions fall into either of the previously discussed categories
and estimates follow from Theorem~\ref{thm:4.3} and~\ref{thm:4.4}. The resulting estimates
are collected in Table~\ref{tab1}.

\begin{table}[htbp]
\begin{center}
\begin{tabular}{|c|c|c|}
\hline&&\\ & small frequencies & large frequencies \\&&\\\hline&&\\
parabolic directions & $(1+t)^{-\frac32}$ & $\e^{-Ct}$ \\&&\\\hline&&\\
hyperbolic directions & $(1+t)^{-1}$ &$(1+t)^{-1}$\\&&\\ \hline&&\\
conic degeneracies &$(1+t)^{-\frac12}$ &$(1+t)^{-\frac12}$\\&&\\ \hline&&\\
uniplanar degeneracies & $(1+t)^{-\frac12-\frac1\gamma}$&$(1+t)^{-\frac12-\frac1\gamma}$ \\
& $\gamma\in\{2,3,4\}$ & $\gamma\in\{2,3,4\}$ \\
&&\\\hline
\end{tabular}
\end{center}
\caption{Contributions to the dispersive decay rate for cubic media.}\label{tab1}
\end{table}

\begin{cor}[Cubic decay rates]
Cubic media in three space dimensions satisfy the dispersive type estimate
\begin{equation}
   \| \sqrt{A(\D)} U(t,\cdot), U_t(t,\cdot), \theta(t,\cdot) \|_{L^q(\R^n)} \lesssim (1+t)^{-\frac12(\frac1p-\frac1q)} 
   \| \sqrt{A(\D)} U_1, U_2, \theta_0 \|_{p,r}
\end{equation}
for all data $U_1\in W^{p,r+1}(\R^3;\C^3)$, $U_2\in W^{p,r}(\R^3;\C^3)$ and 
$\theta\in W^{p,r}(\R^3)$, provided $p\in(1,2]$, $pq=p+q$ and $r>3(1/p-1/q)$. 
\end{cor}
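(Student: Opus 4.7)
The plan is to combine the four micro-local estimates already established (Lemma~\ref{lem:4.1} for parabolic directions, Theorem~\ref{thm:4.2} for regular hyperbolic manifolds, and Theorems~\ref{thm:4.3} and~\ref{thm:4.4} for the two types of degenerate directions) by means of a smooth partition of unity on $\S^2$ and then read off the slowest resulting decay rate from Table~\ref{tab1}.

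First I would fix a finite open covering of $\S^2$ consisting of: (i)~open sets $\mathcal U_\alpha$ contained entirely in the set of parabolic non-degenerate directions; (ii)~sufficiently small tubular neighbourhoods of compact pieces of the nine hyperbolic great circles, cut away from their intersection points; (iii)~small conic neighbourhoods of the eight corner points $\bar\eta=\frac1{\sqrt3}(\pm1,\pm1,\pm1)^\top$; and (iv)~small conic neighbourhoods of the six face midpoints $\pm e_k$. By the analysis of Section~\ref{sec2} and the discussion of cubic media in 3D, these four families cover $\S^2$. Choose a smooth $0$-homogeneous partition of unity $\{\psi_\beta\}$ subordinate to this covering and extend it radially to $\R^3\setminus\{0\}$.

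Second, I would transfer the system formulation: for each localising cut-off $\psi_\beta(\D)$ the triple $(\sqrt{A(\D)}U,U_t,\theta)$ is, up to a bounded pseudo-differential factor, equivalent (on the support of $\psi_\beta$) to the variable $V$ defined by \eqref{eq:2.7} or its analogue \eqref{eq:3.10} near degenerate points. Hence an $L^p$--$L^q$ estimate for $\psi_\beta(\D) V$ translates to one for the micro-localised physical unknowns with a comparable norm of the data. This reduction has essentially been carried out in the proofs of Theorems~\ref{thm:4.3} and~\ref{thm:4.4}.

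Third, I would apply the corresponding estimate on each piece: on pieces of type~(i) Lemma~\ref{lem:4.1} provides a rate $(1+t)^{-\frac32(1/p-1/q)}$ at low frequencies and exponential decay at high frequencies; on pieces of type~(ii) Theorem~\ref{thm:4.2} applies with $\gamma_j=2$ (as discussed after its proof for both the $\eta_3=0$-type circles and the $\eta_2=\eta_3$-type circles), yielding rate $(1+t)^{-(1/p-1/q)}$; on pieces of type~(iii) Theorem~\ref{thm:4.3} produces rate $(1+t)^{-\frac12(1/p-1/q)}$; and on pieces of type~(iv) Theorem~\ref{thm:4.4} gives rate $(1+t)^{-(\frac12+1/\gamma_u)(1/p-1/q)}$ with $\gamma_u\in\{2,3,4\}$. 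Summing over the finitely many pieces of the partition, the overall decay rate is dictated by the slowest of these, which is the conic rate $(1+t)^{-\frac12(1/p-1/q)}$. The Sobolev regularity $r>3(1/p-1/q)$ required by each of the individual estimates is the same, so it carries over without loss.

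The only non-routine point is verifying that the partition of unity can be chosen so that the supports of $\psi_\beta$ fall entirely within the regimes for which the corresponding theorem was stated (in particular, the tubular neighbourhoods of the hyperbolic circles used in Theorem~\ref{thm:4.2} must stay disjoint from the $14$ degenerate directions, and the conic neighbourhoods used in Theorems~\ref{thm:4.3} and~\ref{thm:4.4} must be small enough for the asymptotic expansions of Propositions~\ref{prop:3.5} and~\ref{prop:3.7} to be effective). Since both the set of degenerate directions and the set of hyperbolic circles is a finite union of analytic submanifolds with only the mentioned intersections, this is a standard compactness argument on $\S^2$ and is the main (but essentially combinatorial) obstacle.
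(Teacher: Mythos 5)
Your strategy — micro-localise frequency space into parabolic regions, tubular neighbourhoods of the hyperbolic circles, and conic neighbourhoods of the two kinds of degenerate points, then take the slowest of the rates in Table~\ref{tab1} — is exactly the route the paper takes in its ``Collecting the estimates'' paragraph, and your comparison of rates and identification of the conic degeneracies as the bottleneck (giving $(1+t)^{-\frac12(1/p-1/q)}$) are correct.

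There is, however, one gap: the covering of $\S^2$ that you describe is not actually a covering. The twelve edge-midpoint directions (the points marked C in Figure~\ref{fig2}, e.g.\ $\frac1{\sqrt2}(1,1,0)^\top$) lie on the intersection of \emph{two} of the nine hyperbolic great circles, so they are excluded by your type~(ii) neighbourhoods once you ``cut away from their intersection points''; but they are elastically \emph{non-degenerate}, so they are not among the $14$ directions handled by types~(iii) and~(iv); nor are they parabolic, since two coupling functions vanish there. They must be covered by a fifth family of small neighbourhoods. Inside such a neighbourhood the two hyperbolic eigenvalues are distinct (the direction is non-degenerate), so the modes decouple via the corresponding spectral projections: apply Theorem~\ref{thm:4.2} once for each of the two regular hyperbolic circles passing through the point (each with $\gamma_j=2$), and treat the remaining parabolic mode via the observation following Lemma~\ref{lem:4.1}. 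This contributes a rate $(1+t)^{-(1/p-1/q)}$, which is not slower than the conic rate, so your conclusion is unaffected — but the partition of unity as you wrote it does not exist without this fifth family, and the claim ``these four families cover $\S^2$'' should be amended.
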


Decay rates improve if the Fourier transform of the initial data vanishes in
the conically degenerate directions. This could be achieved by posing particular symmetry 
conditions.

\subsection{Hexagonal media}
The treatment of hexagonal media is somewhat simpler. The uniplanar degenerations trivially satisfy the assumption \eqref{eq:uni-cond} and therefore yield the decay rates specified
by the above theorem. The additionally appearing manifolds of degenerate directions 
are trivially resolved as there are smooth families of eigenprojections associated to both eigenvalues (as we stay away from the uniplanar points) and we can therefore treat the
modes separately. 

One of them is hyperbolic for all directions, we refer to it as the 
genuine hyperbolic mode. The sheet of the Fresnel surface corresponding to this
mode, i.e., to the eigenvalue $\varkappa(\eta) = \frac{\tau_1-\lambda_1}2 (\eta_1^2+\eta_2^2)+\mu \eta_3^2$ is easily seen to be strictly convex for all choices of the parameter 
and gives therefore $t^{-1}$. The proof is similar to that for the wave equation, see \cite{Brenner75}.

The parabolic modes away from the degenerate hyperbolic directions are treated as before, 
while  the remaining  degenerate 
hyperbolic manifold is treated by the estimate of Theorem~\ref{thm:4.2} with $\gamma=2$
due to rotational invariance.
The resulting estimates are collected in Table~\ref{tab2}.

\begin{table}[htpb]
\begin{center}
\begin{tabular}{|c|c|c|}
\hline&&\\ & small frequencies & large frequencies \\&&\\\hline&&\\
genuine hyperbolic mode & $(1+t)^{-1}$ &$(1+t)^{-1}$\\&&\\ \hline&&\\
parabolic modes & $(1+t)^{-3/2}$ & $\e^{-Ct}$ \\&&\\\hline&&\\
hyperbolic directions & $(1+t)^{-1}$ &$(1+t)^{-1}$\\&&\\ \hline&&\\
uniplanar degeneracies & $(1+t)^{-1}$&$(1+t)^{-1}$ \\&&\\\hline
\end{tabular}
\end{center}
\caption{Contributions to the dispersive decay rate for hexagonal media.}\label{tab2}
\end{table}

\begin{cor}[Hexagonal decay rates]
Cubic media in three space dimensions satisfy the dispersive type estimate
\begin{equation}
   \| \sqrt{A(\D)} U(t,\cdot), U_t(t,\cdot), \theta(t,\cdot) \|_{L^q(\R^n)} \lesssim (1+t)^{-(\frac1p-\frac1q)} 
   \| \sqrt{A(\D)} U_1, U_2, \theta_0 \|_{p,r}
\end{equation}
for all data $U_1\in W^{p,r+1}(\R^3;\C^3)$, $U_2\in W^{p,r}(\R^3;\C^3)$ and 
$\theta\in W^{p,r}(\R^3)$, provided $p\in(1,2]$, $pq=p+q$ and $r>3(1/p-1/q)$. 
\end{cor}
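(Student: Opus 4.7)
The strategy is to micro-localise the solution via a smooth $0$-homogeneous partition of unity on $\S^2$ adapted to the geometry of the Fresnel surface, apply the individual dispersive estimates already established to each piece, and collect the resulting bounds via the triangle inequality. Concretely, with cut-offs $\psi_0,\psi_1,\ldots,\psi_N\in C^\infty(\R^3\setminus\{0\})$ summing to $1$, I would split frequency space into (a) open conic sets of parabolic directions, (b) a tubular neighbourhood of the compact regular hyperbolic submanifold defined by \eqref{eq:hexa-deg-circ}, whenever this circle is non-empty, and (c) small conic neighbourhoods of the uniplanar directions $\pm(0,0,1)^\top$; on every piece I also have to treat the always-present genuine hyperbolic mode whose phase equals $\sqrt{\varkappa(\eta)}|\xi|$ with $\varkappa$ from \eqref{eq:hexa-hyp-eig}. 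Away from the uniplanar points the reduction to the first-order system \eqref{eq:4.1} with symbol \eqref{eq:2.9} proceeds as in Section~\ref{sec2}, and by the discussion in Section~\ref{sec:2:hex} the genuine hyperbolic block decouples from the remaining $5\times5$ thermo-elastic block; near the uniplanar points one uses instead the blow-up construction of Section~\ref{sec3.3}.

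On the parabolic pieces, Lemma~\ref{lem:4.1} gives exponential decay for large frequencies and $(1+t)^{-\frac{3}{2}(\frac{1}{p}-\frac{1}{q})}$ for small ones. On the hyperbolic circle (b) the corresponding section of the Fresnel surface is itself a circle by rotational invariance and hence has nowhere vanishing curvature, so Theorem~\ref{thm:4.2} applies with $\gamma_j=2$ and yields $(1+t)^{-(\frac{1}{p}-\frac{1}{q})}$. On the pieces around the uniplanar points, rotational invariance around the $x_3$-axis forces the planar cross-sections of the Fresnel surface parallel to the common tangent plane to be circles, so condition \eqref{eq:uni-cond} is satisfied with $\gamma_u=2$, and Theorem~\ref{thm:4.4} delivers $(1+t)^{-(\frac{1}{p}-\frac{1}{q})}$. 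For the genuine hyperbolic mode the sheet $\{\sqrt{\varkappa(\eta)}=1\}$ with $\varkappa$ from \eqref{eq:hexa-hyp-eig} is an ellipsoid of revolution, hence strictly convex; Brenner's classical stationary-phase argument, already invoked in the proof of Theorem~\ref{thm:4.2}, applies globally on $\S^2$ and again produces $(1+t)^{-(\frac{1}{p}-\frac{1}{q})}$.

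The organisational step that requires some care is the compatibility of the micro-localisations with the diagonalisation procedures of Sections~\ref{sec2} and~\ref{sec3.3}: one must check that, after cutting off in frequency, the cut-off solution is indeed represented by the oscillatory integrals with the phases and amplitudes described in Propositions~\ref{prop:2.5}, \ref{prop:2.6} and~\ref{prop:3.7}, so that the geometric inputs of Theorems~\ref{thm:4.2} and~\ref{thm:4.4} are actually available. Once this is arranged the proof is assembled by the triangle inequality: every piece decays at least like $(1+t)^{-(\frac{1}{p}-\frac{1}{q})}$, as summarised in Table~\ref{tab2}, hence so does the full solution. The passage from the first-order variable $V$ of \eqref{eq:2.7} back to $(\sqrt{A(\D)}U, U_t, \theta)$ is a bounded Fourier multiplier exchange which is absorbed into the Sobolev regularity $r>3(1/p-1/q)$ together with a standard energy estimate used on the compact time interval $t\le 1$.
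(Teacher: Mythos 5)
Your proposal is correct and follows essentially the same route as the paper's implicit argument: micro-localise frequency space by a homogeneous partition of unity adapted to the degenerate directions of the hexagonal Fresnel surface, apply Lemma~\ref{lem:4.1}, Theorem~\ref{thm:4.2} with $\gamma_j=2$, Theorem~\ref{thm:4.4} with $\gamma_u=2$, and a Brenner-type estimate for the genuine hyperbolic mode (strictly convex ellipsoid of revolution), then take the worst rate from Table~\ref{tab2} via the triangle inequality. The only addenda you supply beyond the paper are explicit (and correct) justifications -- rotational invariance of the planar cross-sections near the unodes giving \eqref{eq:uni-cond}, and the boundedness of the multiplier passing from $V$ back to $(\sqrt{A(\D)}U,U_t,\theta)$ -- which the paper leaves tacit.
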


\bigskip
\noindent{\bf Acknowledgements.} The paper was inspired by many discussions with Michael Reissig and also Ya-Guang Wang, who in particular raised the interest for dispersive decay rates for thermo-elastic systems and the applied decoupling techniques to deduce them. The author is 
also grateful to Otto Liess for pointing out some of his results on decay estimates for Fourier transforms of measure carried by singular surfaces.

\end{document}